\documentclass{amsart}

\usepackage{lipsum}
\usepackage{amsfonts}
\usepackage{graphicx}
\usepackage{epstopdf}
\usepackage{algorithmic}
\usepackage{mathtools}
\usepackage{placeins}
\usepackage{amssymb}
\usepackage{enumitem}
\usepackage{xcolor}
\usepackage{url}
\usepackage{hyperref}
\usepackage{thm-restate}
\ifpdf
  \DeclareGraphicsExtensions{.eps,.pdf,.png,.jpg}
\else
  \DeclareGraphicsExtensions{.eps}
\fi

\usepackage{amsopn}

\newcommand{\R}{\mathbb{R}}

\newcommand{\N}{\mathbb{N}}

\newcommand{\Td}{\mathbb{T}^d}
\newcommand{\Rd}{\mathbb{R}^d}

\newcommand{\cB}{\mathcal{B}}

\newcommand{\cE}{\mathcal{E}}

\newcommand{\cI}{\mathcal{I}}

\newcommand{\cK}{\mathcal{K}}
\newcommand{\cL}{\mathcal{L}}
\newcommand{\cM}{\mathcal{M}}
\newcommand{\cN}{\mathcal{N}}

\newcommand{\cQ}{\mathcal{Q}}
\newcommand{\cR}{\mathcal{R}}

\newcommand{\cV}{\mathcal{V}}

\newcommand{\cX}{\mathcal{X}}
\newcommand{\cY}{\mathcal{Y}}

\newcommand{\dz}{\;\mathsf{d}z}
\newcommand{\dx}{\;\mathsf{d}x}
\newcommand{\dy}{\;\mathsf{d}y}

\newcommand{\sfK}{\mathsf{K}}

\newcommand{\Obar}{\overline{\Omega}}

\newcommand{\mfK}{\mathsf{K}}

\renewcommand{\tilde}{\widetilde}
\renewcommand{\hat}{\widehat}

\newcommand\p{\partial}
\newcommand{\la}{\langle}
\newcommand{\ra}{\rangle}

\renewcommand{\L}{{\lfloor \bl^\ast/2 \rfloor}}
\newcommand{\bl}{\bm{\ell}}

\definecolor{darkred}{rgb}{.7,0,0}
\definecolor{darkblue}{rgb}{0,0,.7}

\newtheorem{theorem}{Theorem}[section]
\newtheorem{lemma}[theorem]{Lemma}
\newtheorem{corollary}[theorem]{Corollary}
\newtheorem{proposition}[theorem]{Proposition}

\theoremstyle{definition}
\newtheorem{definition}[theorem]{Definition}

\theoremstyle{remark}
\newtheorem{remark}[theorem]{Remark}
\newtheorem{assumption}[theorem]{Assumption}

\numberwithin{equation}{section}
\AtBeginDocument{%
}

\begin{document}

\title{Enforcing Dirichlet Boundary Conditions in Operator Learning}

\author{Andrew M. Stuart}
\address{Department of Computing and Mathematical Sciences, Caltech, Pasadena, CA}
\email{astuart@caltech.edu}
\thanks{The first author was supported by the Department of Defense Vannevar Bush
Faculty Fellowship N00014-22-1-2790}

\author{Margaret Trautner}
\address{Department of Mathematics, ETH Z\"urich, Z\"urich, Switzerland}
\email{margaret.trautner@sam.math.ethz.ch}
\thanks{The second author, the majority of whose work was done whilst at Caltech,
 was also supported by the Department of Defense Vannevar Bush
Faculty Fellowship N00014-22-1-2790 awarded to the first author. The second author is grateful to ETH Z\"urich Department of Mathematics for support during the completion of the manuscript.} 

\subjclass[2010]{Primary 68T01, 65N99, 35J05}

\date{\today}

\keywords{Operator learning, Dirichlet boundary conditions, Laplacian eigenfunctions, scientific machine learning}

\begin{abstract} 
Operator learning in scientific machine learning is concerned
with approximation of maps between infinite-dimensional function spaces; such
 maps frequently arise as the solution operators of partial differential equations (PDEs). Neural operators have demonstrated broad empirical success at approximating such maps from data. However, most existing neural operator architectures enforce boundary conditions indirectly through training from data even though the boundary condition is often known exactly. Furthermore, existing modifications and approaches that do enforce boundary conditions explicitly suffer from impractical restrictions, including boundary smoothness, uniform grids, and separable, box-like domains. In this work, we propose an architecture which, 
independently of training, satisfies homogeneous Dirichlet boundary conditions,
whilst simultaneously retaining the expressivity of existing 
kernel-integral neural operator architectures.
This is achieved by enforcing the property that the output of each
layer is contained in the span of a subset of the homogeneous Dirichlet eigenfunctions 
of the Laplacian on the output domain. The method requires only that the 
output domain be bounded with Lipschitz boundary and places no restriction on the choice of discretization, making it applicable to arbitrary mesh data and general geometries. We prove universal approximation for the resulting architecture;
furthermore the approach we adopt in the analysis proves universality for a broad class of kernel-integral neural operators thereby uniting existing theory for a variety of operator learning methods.  We validate the proposed method on maps defined by the coefficient to solution map in 2D PDEs: Darcy flow on a 
square domain and the Helmholtz equation on a circular domain. Comparisons are
made with alternative methods.
\end{abstract}

\maketitle

\section{Introduction}

\subsection{Motivation and Literature Review} Interest in, and the use of,
neural network-based models for problems in science and engineering has 
exploded in the past decade. For many of these problems, the underlying phenomena are governed by partial differential equations (PDEs), which map input functions to output functions via an infinite-dimensional operator. \textit{Operator learning} encompasses a branch of machine-learning models that generalize to this infinite-dimensional function space setting; the models themselves are termed \textit{neural operators} and early examples include DeepONet \cite{lu2021learning}, PCA-Net \cite{hesthaven_nonintrusive_2018,bhattacharya_model_2021}, random features \cite{nelsen2021random}  and the Fourier Neural Operator (FNO) \cite{li_fourier_2021}.
For overviews of the field see \cite{kovachki2023neural,azizzadenesheli2024neural,kovachki2024operator}.

Operator learning has proved empirically useful in two main cases. First, when the underlying equations of the system are known, operator learning can accelerate scientific computing by serving as a surrogate model for computationally expensive numerical simulations \cite{peng2022rapid, bhattacharya2025learning,azizzadenesheli2024neural}. The second case is where a model is learned from data 
alone, as the governing equations are unknown \cite{yin2022simulating, you2022physics, dong2025data}. However, in many settings, the reality falls in between these two cases; some information is known about the system, such as a boundary condition or conservation law \cite{hansen2023learning}, but the full descriptive equations remain unknown. In these cases, it is natural to want to enforce the known information without impeding the 
expressivity of the architecture or the ability to learn an approximator through stochastic optimization. It is this setting which largely motivates the present work: we \textit{enforce boundary conditions }for neural operators that hold \textit{even without training} while retaining the expressivity of the architecture. Furthermore, we develop the method in a manner that can act on \textit{arbitrary mesh data} and thus on \textit{general geometries} in any dimension. 

Commonly used neural operator architectures rely on data to learn the boundary condition and thus enforce 
it only approximately. For instance, the DeepONet architecture multiplies two neural networks, one of which, the ``branch'' network, constructs basis functions from the input function, and the second, the ``trunk'' network, serves as a means to evaluate the output function on any point in the domain. Thus, the ``trunk'' network is largely responsible for learning the boundary condition; any point on the boundary should evaluate to match the boundary condition. However, there is no current mechanism to enforce this explicitly without fundamentally altering the expressivity of the architecture. Similarly, the FNO processes the input function through finite-dimensional neural networks acting pointwise both on the function and on a learned reweighting of the function in Fourier space via a \textit{kernel-integral operator} \cite{li_fourier_2021}. By design, this architecture is natural for periodic boundary conditions on box-like domains with uniform grids. In practice, a \textit{positional encoding}, or simply the coordinates $(x_1, \dots, x_d)$ of the input domain, are appended to the input function values at that point. Thus, enforcement of non-periodic boundary conditions comes only by learning a map from this input positional encoding to the boundary value from data, and once again there is currently no mechanism to enforce this explicitly. These two architectures are representative of the larger class of neural operator architectures which enforce boundary conditions in a similar manner: PCA-Net, various modifications of DeepONet \cite{abueidda2025deepokan, goswami2023physics}, and other architectures involving kernel-integral operators, including wavelet neural operators \cite{tripura2023} and the Laplace neural operator \cite{cao2024laplace} all require data to enforce boundary conditions through approximation.

Several lines of research have made efforts to enforce boundary conditions explicitly in both operator learning and scientific machine learning (SciML) more broadly. Most notably, work on orthogonal polynomial neural operators (OPNOs) \cite{liu2024render} uses a Shen transform \cite{shen2011spectral} of Chebyshev polynomials to replace the Fourier basis and transform in the FNO with a basis that naturally satisfies Dirichlet, von Neumann, or Robin boundary conditions. The work also develops an efficient algorithm for computing these transforms and demonstrates that the resulting architecture satisfies boundary conditions to machine precision. A drawback of this work is that it requires data to be evaluated on Chebyshev-Gauss-Lobatto points, a variant of Gaussian grids \cite{boyd2001chebyshev}. This issue is remedied by the follow-up work \cite{liu2023spfno} that develops an algorithm to extend the previous method to uniform grids. Both works prove universal approximation properties of their architecture and demonstrate machine precision satisfaction of the boundary conditions in various PDE examples. However, limitations of these methods are that they may only be generalized to separable domains; i.e.~domains that can naturally be viewed as a $d$-dimensional box with uniform or Gaussian grids as opposed to arbitrary mesh coordinate points. A similar approach also appears in recently proposed work that is capable of enforcing the boundary conditions on arbitrarily curved quadrilateral domains only \cite{dong2026novel}. These caveats prevent the methods from extending to more complex geometries, which may well be the setting when direct enforcement of the boundary condition is most valuable. 

Other approaches to enforce boundary conditions in operator learning include BOON \cite{saad2023guiding}, which uses a physics-informed refining procedure to enforce the constraint, as well as a method based on Q-learning that proposes an algorithm that refines the boundary conditions iteratively during training \cite{cohen2023neural}. In non-operator learning approaches to SciML, boundary conditions have also been encoded explicitly in graph neural networks \cite{horie2022physics} and physics-informed neural networks \cite{berrone2023enforcing}. Furthermore, the boundary condition problem has been considered in work that predates most current approaches in SciML, including for data-driven models based on kernel learning \cite{gnecco2013learning} and standard feedforward neural networks \cite{mcfall2009artificial}. The approach in this latter work involves multiplication by a known function that satisfies a zero boundary condition and has also been carried forward to more recent work on convergence of Deep Galerkin Methods (DGMs) to approximate PDEs in SciML \cite{jiang2023global, cohen2026deep}. Another very recent method proposes to learn function extensions separately from the main neural operator model, which allows additional domain generality \cite{mousavi2026imposing}.

\subsection{Contributions}

We make the following contributions to the state-of-the-art, in relation
to enforcing Dirichlet boundary conditions within operator learning,
as overviewed in the preceding subsection:

\begin{enumerate}[label=(C\arabic*)]
    \item We propose an operator learning methodology that automatically satisfies Dirichlet boundary conditions,
    independent of training, by using the Dirichlet eigenfunctions of the Laplacian as a central part of the architecture.
    \item We prove universal approximation for a general class of nonlocal neural operators, subsuming the proposed Dirichlet eigenfunction based architecture
as a special case; these approximation results may be of independent interest 
beyond the application to enforcing Dirichlet boundary conditions. This general class includes neural operators using only a single hidden layer and single basis function as well as neural operators that perform approximation in a particular basis of interest. Sufficient conditions on such basis functions are provided.
    \item We implement the proposed method to approximate two representative
operators defined by coefficient-to-solution maps arising in PDE problems: the Darcy flow equation on a square domain and the Helmholtz equation on a circular domain. We demonstrate the advantage of the proposed method over FNO, as well as over three possible alternative operator learning approaches that also 
automatically satisfy the boundary condition.
\end{enumerate}

Specifically, we introduce a new, kernel-integral based, 
operator learning architecture in which the output space of each layer builds an approximation in 
the basis of Dirichlet eigenfunctions of the Laplacian in the output domain,
contribution (C1). These eigenfunctions satisfy a zero boundary condition and thus, coupled with a zero-preserving activation function and an appropriate fixed bias, create an architecture that automatically satisfies the Dirichlet boundary condition of a PDE. In fact, replacing the final hidden layer of any kernel-integral neural operator, including FNO, with this Laplacian eigenfunction layer achieves the desired boundary condition without training. In addition, as a departure from previous work that automatically satisfies the boundary condition, this approach is applicable to complex geometries and domains; we assume only that the domain is bounded, with Lipschitz boundary. Furthermore, the method requires no restriction on choice of discretization or grid; it may be used on functions defined on arbitrary meshes. Beyond the choice of the eigenfunctions of the Laplacian as the basis elements, we establish universal approximation for a much broader class of potential basis functions, contribution (C2). The approach we adopt suggests practical extensions of the method to other classical basis choices, including various finite element bases. Numerical results demonstrate the practicality
and competitiveness of the proposed approach, contribution (C3).

The remainder of the paper is organized as follows. In Section \ref{sec:prelim}, we define the nonlocal neural operator, as well as the Dirichlet-boundary condition enforcing variant, and detail the assumptions on our choice of basis. In Section \ref{sec:theory}, we present the main theorems as well as critical lemmas supporting the conclusions; we defer the detailed proofs of the lemmas to the appendix. In Section \ref{sec:num-exp}, we showcase the results of various numerical experiments supporting the practical application of the proposed Laplacian eigenfunction layer. We make concluding remarks
in Section \ref{sec:conclusion}. The more technical proofs, and some useful supplementary lemmas, are contained
in the appendix. Appendix \ref{sec:finite-NN} contains lemmas describing finite-dimensional feedforward neural networks; Appendix \ref{appd:prop-proof} contains proofs that various familiar application settings satisfy the assumptions of our main theorems; Appendices \ref{appd:lem-decomp} and \ref{appd:alpha-approx} contain the proofs of the two main lemmas underlying our main results, Theorem \ref{thm:main-single} and Theorem \ref{thm:main-multiple}; and Appendix \ref{sec:eigfunc-Lap} contains key results on the behavior of the eigenfunctions of the Laplacian. 

\section{Definition of Nonlocal Neural Operator}\label{sec:prelim}

In this section we define the \textit{nonlocal neural operator (NNO)} as in \cite{lanthaler2025nonlocality}, repeated here in the context and notation of our work, as well as specify the \textit{Dirichlet layer} that allows for exact enforcement of Dirichlet boundary conditions. We note that the NNO may also be referred to as a \textit{kernel-integral neural operator} in the literature, and that a variety of common operator learning architectures fall into this general setting, including the Fourier Neural Operator (FNO) \cite{li_fourier_2021}. The NNO definition is given in Subsection \ref{ssec:NNO}. In Subsection \ref{ssec:Dir-layer}, we define the Dirichlet layer variant, which is obtained through the particular choice of the eigenfunctions of the Laplacian as the basis functions in the hidden layer of the more general NNO definition.

Throughout this work, $\|\cdot\|$ and $\la\cdot, \cdot \ra$ denote the $L^2(\Omega)$ norm and inner product, and $|\cdot |$ denotes the Euclidean norm. We let $\Omega$ be a bounded, open, Lipschitz domain in $\R^d$ unless otherwise specified. Furthermore, we define $[N] := \{1, \dots, N\}$. We adopt the convention that $C^{s}(\Omega)$ is the Banach space with norm \[\|\eta\|_{C^{s}(\Omega)} = \sum_{|\alpha| \leq s} \sup_{x \in \Omega} |(\partial^\alpha \eta)(x)| < \infty.\]

\subsection{Nonlocal Neural Operator} \label{ssec:NNO}
Let $\cX(\Omega;\R^o)$, $\cY(\Omega;\R^o)$, and $\cV(\Omega;\R^o)$ be Banach spaces of $\R^o$-valued functions over domain $\Omega$. Given a channel width $d_c$, define the following \textit{lifting} and \textit{projection} layers as learnable feedforward neural networks\footnote{Appendix \ref{sec:finite-NN} contains a succinct summary of the feedforward neural network properties needed in our work.} between finite-dimensional Euclidean spaces defined as Nemytskii operators via
\begin{equation} \label{eqn:lifting}
\cR: \cX(\Omega; \R^k) \to \cV(\Omega;\R^{d_c}), \quad u(x) \mapsto R(u(x),x)
\end{equation}
and 
\begin{equation} \label{eqn:projection}
\cQ: \cV(\Omega;\R^{d_c}) \to \cY(\Omega;\R^{k'}), \quad v(x)\mapsto Q(v(x)). 
\end{equation}
Note that $\cR$ includes the positional encoding via pairing of $u(x)$ with the domain variable $x$.
 Each hidden layer $\cL_{\ell}$, $\ell \in [L]$, defines a mapping $\cV(\Omega; \R^{d_c}) \to \cV(\Omega; \R^{d_c})$ via
\begin{equation}
    \label{eqn: hidden-layer}
    (\cL_\ell v)(x) = \sigma\left(W_\ell v(x) + b_\ell + \sum_{m=1}^M\la T_{\ell,m} v, \psi_{\ell,m}\ra_{L^2(\Omega;\R^{d_c})} \phi_{\ell,m}(x)\right). 
\end{equation}
Here the activation function $\sigma: \R \to \R$ is a non-polynomial $C^\infty$ function which is extended to a Nemytskii operator acting component-wise on vector inputs: for a function $v(x) = (v_1(x), \dots, v_{d_c}(x))$, we define $\sigma(v(x)) = (\sigma(v_1(x)), \dots, \sigma(v_{d_c}(x)))$ for $x \in \Omega$. We may now define an NNO; note that the definition includes
an assumption on the properties of the activation function and that this assumption follows the definition.

\begin{definition}[Nonlocal Neural Operator]\label{def:NNO}
The \emph{Nonlocal Neural Operator} (NNO) is a mapping $\Psi: \cX(\Omega;\R^k) \to \cY(\Omega;\R^{k'})$ consisting of a lifting layer $\cR$, hidden layers $\cL_1, \dots, \cL_L$, and projection layer $\cQ$ as defined in equations \eqref{eqn:lifting}, \eqref{eqn: hidden-layer}, and \eqref{eqn:projection}, respectively, that acts on input $a \in \cX(\Omega; \R^k)$ via 
    \begin{equation*}
        \Psi(a) = \cQ \circ \cL_{L} \circ \dots \circ \cL_{1} \circ \cR(a),
    \end{equation*}
    and uses a non-polynomial $C^\infty$ activation function $\sigma$. \hfill$\lozenge$
\end{definition}

  We assume that the activation function $\sigma$ used throughout the model is such that a feedforward, zero-preserving neural network can universally approximate the identity in $C^s$ on a closed ball in a finite dimension. This is true for ReLU for any $s \geq 0$ \cite[Appendix B, Part (iv)]{nakada2020adaptive}, via $\sigma(x) - \sigma(-x) = x$, which allows for exact replication of the identity. It is also true for GeLU for any integer $s \geq 0$ \cite[Lemma 3.1]{yakovlev2025approximation}, which, unlike ReLU, also satisfies the $C^\infty$ condition.

\begin{assumption}\label{ass:sig-eta}
   Let $I_d$ be the identity matrix in $d$ dimensions. Let $s_\sigma \geq 0$. We assume $\sigma$ is such that $\sigma(0)=0$ and for any $\epsilon > 0$ and closed ball $\cK \subset \Rd$, there exists a number of layers $L$, matrices $\{A_l\}_{l=1}^L$, and biases $\{b_l\}_{l=1}^L$ such that for neural network \[G(x) = A_L(\sigma(A_{L-1}\sigma(\dots A_1x + b_1) \dots + b_{L-1}) + b_L,\]
    $G(0) = 0$, and
    it holds that $\|G - I_d\|_{C^{s_\sigma}(\cK)} < \epsilon.$ \hfill$\lozenge$ 
\end{assumption}

Universal approximation results have been proved for many specific variants of the NNO that make a particular choice of the basis functions $\psi_{\ell, m}$ and $\phi_{\ell, m}$ in \eqref{eqn: hidden-layer}. For example, in the FNO these are the Fourier basis elements. In order to capture a broad class of possible architectures under the same theory, rather than proving multiple specific theorems for each specific basis choice, in this work we prove universality for any NNO whose basis functions satisfy mild conditions. Informally, if the map to be approximated produces an output set that can be approximated in some $L^2(\Omega)$ basis $\{\eta_j\}_{j=1}^\infty \subset C^{s'}(\Obar)$, then one can achieve $C^{s'}$ approximation using only a single hidden layer and single pair of basis functions $\psi_{1,1}$, positive almost-everywhere and continuous, and $\phi_{1,1}$, nonzero and $C^{s'}$. This is formalized in Section \ref{sec:theory}. Additionally, if the set of functions $\{\eta_j\}_{j=1}^\infty$ only satisfies $\{\eta_j\}_{j=1}^\infty \subset C^{s'}(\Omega)$, then one can simply choose a finite subset of these functions to serve as the basis functions $\phi_{L,m}$ in the final hidden layer and achieve universal approximation as well. This latter approach can be practically useful when one seeks an output function satisfying certain conditions. These mild conditions also allow one to interchange various choices of basis functions at each layer without violating universality. We emphasize that the general conditions we specify on the basis functions in Section \ref{sec:theory} unite approximation theory for architectures with various choices of basis functions in one single result.

The prior work \cite{lanthaler2025nonlocality} showed that in fact one can set $M=0$, $L=0$, and $\psi_{0,0} = \phi_{0, 0} \equiv 1$ in the NNO architecture and achieve universality via the resulting \textit{averaging neural operator} (ANO). It is a remarkable fact, established in \cite{lanthaler2025nonlocality}, that
a single hidden layer taking the form 
\[
(\cL_1 v)(x) = \sigma\left(Wv(x) + \frac{1}{|\Omega|} \int v(x) \dx \right)
\] 
is sufficient for universal approximation. 
While a surprising and insightful theoretical result, the ANO is not competitive with other operator learning
architectures. Intuitively, the ANO chooses a constant basis to carry the nonlocal information. In the case of, for instance, zero Dirichlet boundary conditions, this adds a constant value everywhere in the domain $\Omega$ and relies heavily on the dependence of the projection layer $Q$ on the positional encoding $x$, carried to the input of $Q$ through an identity block submatrix of $W$. The boundary conditions are enforced through the ability of the ReLU neural network $Q$ to approximate arbitrary continuous functions: positions $x$ on the boundary are mapped close to $0$ as learned from data. In practice, one would never want to use such a restrictive architecture, especially in the case of Dirichlet boundary conditions. In this work we seek a more natural basis for maps with such boundary conditions.

\subsection{Dirichlet Layer}\label{ssec:Dir-layer}
In this subsection, we propose a layer of the nonlocal neural operator that automatically enforces Dirichlet boundary conditions \textit{even before training}. In particular, we set the basis functions equal to the Dirichlet eigenfunctions of the Laplacian on $\Omega$; doing so automatically enforces a zero boundary condition without training, as we prove in Section \ref{sec:theory}. We refer to this modified layer as a \textit{Dirichlet layer}, and define it explicitly below. 
\begin{definition}[Dirichlet Layer]\label{def:Dir-layer}
The \emph{Dirichlet layer} is a mapping $\cV(\Omega; \R^{d_c}) \to \cV(\Omega; \R^{d_c})$ via equation \eqref{eqn: hidden-layer}, where $\psi_{\ell,m} = \phi_{\ell,m} = \eta_{m}$, where $\ell$ is the layer index and $\{\eta_m\}_{m=1}^\infty$ are the Dirichlet eigenfunctions of the Laplacian on $\Omega$. \hfill$\lozenge$
\end{definition}

By using a Dirichlet layer with zero bias as the final hidden layer in an NNO, in tandem with a zero-preserving projection layer $\cQ$, one may enforce homogeneous Dirichlet boundary conditions without training, as we prove in Section \ref{sec:theory}. One may use other choices of hidden layers prior to the Dirichlet layer, and the boundary conditions will still hold. We also point out that while this layer is designed to produce outputs with zero boundary conditions, for constant, nonzero boundary conditions, we may simply preprocess and postprocess the data. For instance, for a map $(u,\mathfrak{b}) \mapsto v$, where $u$ and $v$ are input and output functions, respectively, and $\mathfrak{b}$ is the boundary condition, we may learn the map $(u,\mathfrak{b}) \mapsto v - \mathfrak{b}\mathbf{1}$, where $\mathbf{1}$ is the constant function equal to $1$ on $\Obar$, and then postprocess the output of the NNO by adding back $\mathfrak{b}\mathbf{1}$.

\begin{remark}[Computational Complexity]
     For the FNO, the fast Fourier transform allows the computational complexity of a hidden Fourier layer to be $\mathcal{O}(N\log N)$, where $N$ is the number of grid points, or $N = n^d$ where $n$ is the number of points in a single dimension of the uniform grid. This complexity remains the dominant term even when the number of Fourier modes $K$ is decreased. For a Dirichlet layer with $M$ eigenfunctions forming the hidden layer \eqref{eqn: hidden-layer}, the complexity is $\mathcal{O}(N M)$. In practice, for reasonable mesh sizes, both for FNO and for the Dirichlet layer, we observe that $K \approx \log N$ and $M \approx \log N$ are more than sufficient to achieve low error, and so the Dirichlet layer effectively shares the computational complexity of an FNO layer. This empirical observation is also supported by the numerical experiments in the existing work of \cite{lanthaler2025nonlocality}, which demonstrates that fixing $M$ for the nonlocal neural operator can be optimal in terms of the cost-accuracy tradeoff between parameter count and model accuracy. These findings are corroborated by our own numerical experiments in Section \ref{sec:num-exp}. \hfill$\lozenge$
\end{remark}

\section{Theory}\label{sec:theory}

In this section, we unite theory for a broad class of kernel-integral neural operators under one umbrella. We do so via two theorems. The first theorem, Theorem \ref{thm:main-single}, states that the nonlocal neural operator (NNO) architecture of Definition \ref{def:NNO} is universal with only a \textit{single} hidden layer with a single basis function. This is a generalization of the averaging neural operator result of \cite{lanthaler2025nonlocality}; in particular the single basis function no longer needs to be the constant function.
In fact, universality of several well-known neural operator architectures may be viewed as special cases of this result; see Proposition \ref{prop:cases} for further explanation.

The second main theorem we prove also concerns universality of the NNO architecture, but in a slightly more general setting where we are forced to use the basis functions present in the architecture itself to achieve the universality result. Namely, the first theorem requires that the output space be representable in a basis whose elements are continuous \textit{up to and including} the boundary of the domain $\Omega$. When this is true, finite-dimensional neural networks can approximate, to arbitrary accuracy, continuous functions on bounded sets in $\Rd$, so the proof itself simply uses finite-dimensional neural networks to approximate basis functions. However, this is intuitively unappealing. If the idea is to approximate a function using hidden layers of the form \eqref{eqn: hidden-layer} with some choice of basis elements, it is natural to suppose that the proof could use the fact that the architecture includes these basis elements already instead of reconstructing them using finite-dimensional neural networks. This achievement is the most appealing feature of Theorem \ref{thm:main-multiple}; the proof uses the architecture's own basis functions to obtain the universality result in addition to holding in a slightly more general setting. It is this latter theorem that applies to the Dirichlet eigenfunctions of the Laplacian as well, which for general Lipschitz domains are $C^\infty(\Omega)$ but not necessarily $C^{s'}(\Obar)$ for any $s' > 0$.

We require the following critical assumption about the ability of the chosen basis to approximate functions in the desired output set. Intuitively, one is allowed to first apply a smoother to the output set to be approximated, but then the chosen basis must uniformly approximate the smoothed set. Since we view the following assumption as rather general, we detail in Proposition \ref{prop:cases} some familiar or relevant cases in which Assumption \ref{ass:smoothed_appr} holds. This assumption may be viewed informally as a consequence of separability of the relevant output space, compactness of the input set, and continuity of the underlying map of interest in relevant application settings. Since it is used in the context of the output space in subsequent results, we keep the ``primed'' notation for $\sfK'$ and $s'$, even though the assumption itself is self-contained. Recall that, unless otherwise specified
in this paper, $\Omega$ is open; however the following assumption  and proposition are formulated 
in a slightly more general setting.

\begin{assumption}[Smoothed Approximation]\label{ass:smoothed_appr}
    Let $\Omega \subset \R^d$ be a bounded set that may be open or closed. A compact subset $\sfK'$ of Banach space $\cB(\Omega)$, continuously embedded in $L^1(\Omega)$, satisfies the smoothed approximation assumption in $\cB(\Omega)$ with respect to a set of functions $\{\eta_j\}_{j=1}^\infty$, if there exists a set $B \subset \Rd$ such that $\{\eta_j\}_{j=1}^\infty$ are defined on $B \supseteq \Omega$, there exists a family of linear maps $\{f_\delta\}_{\delta > 0}$, also defined on domain $B$, such that $f_\delta: \cB(\Omega) \to \cB(\Omega)$ for $\delta >0$, and the following hold:
    \begin{enumerate}[label=(S\arabic*)]
        \item\label{it:smooth-conv} \textit{(smoother convergence)} $\lim_{\delta \to 0} \sup_{w \in \sfK'} \|f_\delta(w) - w\|_{\cB(\Omega)} = 0.$
        \item\label{it:proj-conv} \textit{(projection convergence)} For each $\delta >0$, 
        \[\lim_{J \to \infty} \sup_{w \in \sfK'} \Big\| f_\delta(w) - \sum_{j=1}^J \la f_\delta(w), \eta_j \ra_{L^2(B)} \eta_j\Big\| _{\cB(\Omega)} = 0.\]
        \item \label{it:cont-func}\textit{(continuous functional)} For each $\delta >0$ 
and $j \in \mathbb{N}$, the map $w \mapsto \la f_\delta(w), \eta_j \ra_{L^2(B)}$ is a continuous functional on $\cB(\Omega)$. 
    \end{enumerate} \hfill$\lozenge$
\end{assumption}

To establish some key examples where the Smoothed Approximation Assumption above holds, we present the following proposition. 
\begin{proposition}\label{prop:cases}
    The following pairs of functions and Banach spaces satisfy the Smoothed Approximation Assumption \ref{ass:smoothed_appr}: 
    \begin{enumerate}
        \item \label{it:FNO}Let $\Omega$ be a bounded set in $\Rd$ with Lipschitz boundary. For a box $B$ containing $\overline{\Omega}$ in its interior, let $\{\eta_j\}_{j=1}^\infty$ be the $L^2$-orthogonal Fourier sine/cosine basis in $L^2_{\text{per}}(B)$, and let $\sfK'$ be any compact subset of  $C^{s'}(\Obar)$ for integer $s' \geq 0$. Then $\sfK'$ satisfies the Smoothed Approximation Assumption \ref{ass:smoothed_appr} in $C^{s'}(\Obar)$ with respect to 
        $\{\eta_j\}_{j=1}^\infty$.
        \item \label{it:DNO}Let $\Omega$ be a bounded set in $\Rd$ with Lipschitz boundary. Let $\{\eta_j\}_{j=1}^\infty$ be Dirichlet eigenfunctions of the Laplacian on $B:=\overline{\Omega}$. Let $\sfK'$ be any compact subset of $C_0(\Obar)$. Then $\sfK'$ satisfies the Smoothed Approximation Assumption \ref{ass:smoothed_appr} in $C_0(\Obar)$ with respect to $\{\eta_j\}_{j=1}^\infty$. 
        \item \label{it:Leg} Let $\Omega = [-1,1]^d$. Let $\{\eta_j\}_{j=1}^\infty$ be the tensorized Legendre polynomials defined on $B:=\Omega.$ Let $\sfK'$ be any compact subset of $C^{s'}(\Omega)$ for integer $s' \geq 0$. Then $\sfK'$ satisfies the Smoothed Approximation Assumption \ref{ass:smoothed_appr} in $C^{s'}(\Omega)$ with respect to $\{\eta_j\}_{j=1}^\infty$. 
    \end{enumerate}
\end{proposition}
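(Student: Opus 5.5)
The plan is to check the three conditions (S1)--(S3) of Assumption \ref{ass:smoothed_appr} case by case, using in each case a smoother $f_\delta$ adapted to the basis. In all three cases the same two-step mechanism applies. First, (S1) follows from compactness of $\sfK'$: an Arzel\`a--Ascoli type equicontinuity of $\sfK'$, or a finite $\epsilon$-net together with a uniform operator bound on $f_\delta$, turns pointwise convergence $f_\delta(w)\to w$ into uniform convergence over $\sfK'$. Second, (S2) follows because $f_\delta$ maps $\sfK'$ into a set that is bounded in a much smoother norm, so the tail of the expansion in $\{\eta_j\}$ decays uniformly. Condition (S3) is immediate whenever $f_\delta$ is a bounded linear map $\cB\to L^2(B)$, since $w\mapsto\la f_\delta(w),\eta_j\ra$ is then a composition of bounded linear maps.

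\emph{Case (\ref{it:FNO}).} I will use a bounded linear extension operator $E: C^{s'}(\Obar)\to C^{s'}_c(\mathrm{int}\,B)$; for Lipschitz domains Stein's extension provides one. I then take $f_\delta(w)=\chi\,(\rho_\delta * Ew)$, where $\rho_\delta$ is a mollifier and $\chi$ is a fixed cutoff equal to $1$ near $\Obar$ and supported in $\mathrm{int}\,B$, so that $f_\delta(w)$ is smooth and $B$-periodic. For (S1), the functions $\partial^\alpha Ew$, $|\alpha|\le s'$, are uniformly continuous, and the family $\{Ew: w\in\sfK'\}$ is compact, hence uniformly equicontinuous; mollification therefore converges uniformly on $\sfK'$ in $C^{s'}$. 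For (S2), note that $\|f_\delta(w)\|_{C^{r}}\le C_{\delta,r}\|w\|_{C^{s'}}$ for every $r$. Integrating by parts, the Fourier coefficients then decay like $|k|^{-r}$ uniformly on $\sfK'$, and choosing $r>s'+d+1$ gives uniform convergence of the truncated series in $C^{s'}(\Obar)$.

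\emph{Case (\ref{it:Leg}).} This case is similar. I take $f_\delta$ to be the mollification of an extension of $w$, restricted back to the cube. Since $f_\delta(w)$ is analytic or smooth on $[-1,1]^d$ with $C^r$ bounds, the classical bound $\|P_n\|_\infty=1$ together with $\|P_n'\|_\infty\lesssim n^2$ (Markov's inequality) combines with Legendre coefficient decay for $C^r$ functions to give $C^{s'}$ convergence of the tensor expansion, uniformly in $w$. An alternative is to use Jackson-type estimates for best polynomial approximation and note that the Legendre projection has operator norm growing only polynomially.

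\emph{Case (\ref{it:DNO}), the main obstacle.} The smoother here must keep zero boundary values and interact well with the eigenbasis. I will use the heat semigroup: $f_\delta=e^{\delta\Delta_D}$. Then (S3) is clear, since $\la f_\delta w,\eta_j\ra=e^{-\delta\lambda_j}\la w,\eta_j\ra$. For (S1), the Dirichlet heat semigroup on a bounded Lipschitz domain (which is regular for the Dirichlet problem) is a $C_0$-semigroup on $C_0(\Obar)$, as shown by Arendt et al.; strong continuity together with contractivity and compactness of $\sfK'$ gives uniform convergence. For (S2), I need the sup-norm convergence of $\sum_j e^{-\delta\lambda_j}\la w,\eta_j\ra\eta_j$. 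I will use ultracontractivity, namely $\|\eta_j\|_\infty\le C\lambda_j^{d/4}$, obtained from $\eta_j=e^{\lambda_j t}e^{t\Delta}\eta_j$ with $\|e^{t\Delta}\|_{L^2\to L^\infty}\lesssim t^{-d/4}$. I also need Weyl's law, $\lambda_j\gtrsim j^{2/d}$. The tail is then bounded by $\sum_{j>J}e^{-\delta\lambda_j}\lambda_j^{d/4}\,|\Omega|^{1/2}\sup_{\sfK'}\|w\|_\infty$, which tends to $0$ uniformly. Each partial sum lies in $C_0(\Obar)$ because each $\eta_j$ is continuous up to the boundary and vanishes there on Lipschitz domains. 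This last fact, and the $C_0$-semigroup property, are where the Lipschitz hypothesis enters, and I expect verifying them to be the most delicate point; I would rely on the Appendix \ref{sec:eigfunc-Lap} results on eigenfunctions for them.
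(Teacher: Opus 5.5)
Your proposal is correct. Cases (\ref{it:FNO}) and (\ref{it:Leg}) follow essentially the paper's route: extend, mollify, then get coefficient decay. For Fourier you use integration by parts where the paper cites Jackson. For Legendre both of you use the Sturm--Liouville operator together with Markov's inequality. One small correction there: (S2) is stated for an $L^2$-orthonormal basis, so the relevant sup norms grow like $n^{1/2}$ rather than equalling $1$. This is a harmless polynomial factor that the coefficient decay absorbs.

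The genuine difference is in Case (\ref{it:DNO}).

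\emph{The paper's route.} It uses the elementary smoother $f_\delta(w)=\rho_\delta*(\chi_{4\delta}w)$.
\begin{itemize}
\item (S1) is then a direct equicontinuity argument that uses $w|_{\p\Omega}=0$.
\item (S2) needs Lemma \ref{lem:eigfunc-proj}. Since $f_\delta(w)$ is compactly supported in $\Omega$, one integrates by parts repeatedly to get $\la w,\eta_j\ra=(-1)^r\lambda_j^{-r}\la \Delta^r w,\eta_j\ra$. This is combined with $\|\eta_j\|_\infty\lesssim\lambda_j^{d/4}$ and Weyl's law.
\end{itemize}

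\emph{Your route.} The heat semigroup $e^{\delta\Delta_D}$ is diagonal in the eigenbasis.
\begin{itemize}
\item This makes (S3) trivial.
\item (S2) becomes almost trivial: the factor $e^{-\delta\lambda_j}$ beats any polynomial growth of $\|\eta_j\|_\infty$. You do not need Weyl asymptotics, only a crude bound $\lambda_j\gtrsim j^{2/d}$. That bound follows from domain monotonicity against a cube, or from finiteness of the heat trace.
\item The price is that the boundary difficulty moves into (S1). It now rests on the nontrivial theorem of Arendt--B\'enilan that $e^{t\Delta_D}$ is a $C_0$-semigroup on $C_0(\Omega)$ exactly when $\Omega$ is Wiener regular.
\end{itemize}
So the paper's argument is more self-contained, while yours is shorter and conceptually cleaner modulo that citation.

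One step you should state explicitly: the uniformly convergent series $\sum_j e^{-\delta\lambda_j}\la w,\eta_j\ra\eta_j$ coincides with $f_\delta w$. This holds because the $C_0$- and $L^2$-semigroups are consistent, so the uniform limit and the $L^2$ limit agree almost everywhere, and hence everywhere by continuity.
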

Note that Case \ref{it:FNO} encompasses the averaging neural operator and the FNO setting for $C^{s'}(\Obar)$ approximation. We prove later that Case \ref{it:DNO} covers the Dirichlet layer setting. Case \ref{it:Leg} is the unweighted $L^2$-norm analogue of the tensorized Chebyshev polynomial basis used in \cite{liu2024render}. Ultimately, all these settings are covered  by the same theory in Theorems \ref{thm:main-single} and \ref{thm:main-multiple}. Since the proof of Proposition \ref{prop:cases} is technical, we defer it to Appendix \ref{appd:prop-proof}. The intuition behind each case is that we first apply a smoother to an extension of the set $\sfK'$ and then we may approximate it in the specified basis to obtain Smoothed Approximation Assumption \ref{ass:smoothed_appr}, \ref{it:proj-conv}. The Smoothed Approximation Assumption \ref{ass:smoothed_appr}, conditions \ref{it:smooth-conv} and \ref{it:cont-func}, are obtained from properties of the smoothing and extension operators used.

\subsection{Main Theorems}

In this subsection we present our main theorems, Theorem \ref{thm:main-single} and \ref{thm:main-multiple}, which prove universality for NNOs under general conditions on the basis functions. Through this result, we unite theory for kernel-integral neural operators under two statements. In Corollary \ref{cor:Laplacian-version}, we observe that enforcing constant Dirichlet boundary conditions through the Dirichlet layer in Definition \ref{def:Dir-layer} retains universality while automatically enforcing the boundary condition.

We will use the following assumptions.

\begin{assumption}[Single Basis Pair]\label{ass:single-basis-pair}
    We assume that $\eta_1$ and $\eta_2$ satisfy 
    \begin{enumerate}[label=(E\arabic*)]
        \item \label{enum:eta1-1} $\eta_1 \geq 0 $ on $\Omega$.
        \item \label{enum:eta1-2}$\eta_1 = 0$ only on a set of Lebesgue measure $0$.
        \item \label{enum:eta1-3} $\sup_{y \in \Omega}|\eta_1(y)| < \infty$.
        \item \label{enum:eta1-4} $\eta_1 \in C(\Omega)$.
        \item \label{enum:eta2-5} $\eta_2 \in C^{s'}(\Obar)$ and $\eta_2 \neq 0$ on $\Obar$. 
    \end{enumerate} \hfill$\lozenge$
\end{assumption}

The first theorem addresses the case when approximation is possible using only a single hidden layer and pair of basis functions.
\begin{theorem}[Single Basis Element]\label{thm:main-single}
    Let $\Omega$ be a bounded domain in $\Rd$ with Lipschitz boundary. Let $\Psi^\dagger: C^s(\Obar; \R^k) \to C^{s'}(\Obar; \R^{k'})$ be a continuous operator for integers $s,s'\geq 0$, and $\sfK \subset C^{s}(\Obar; \R^k)$ be a compact subset. Assume that $\Psi^\dagger(\sfK)$ satisfies the Smoothed Approximation Assumption \ref{ass:smoothed_appr} in $C^{s'}(\Omega; \R^{k'})$ with respect to some sequence of functions $\{\xi_j\}_{j=1}^\infty$. If these functions are also such that $\{\xi_j\}_{j=1}^\infty \subset C^{s'}(\Obar)$, then for any $\varepsilon >0$, there exists a nonlocal neural operator $\Psi$ as defined in Definition \ref{def:NNO}, with Assumption \ref{ass:sig-eta} on the activation $\sigma$ satisfied with $s_\sigma = s'$, such that 
    \begin{equation}\label{eqn:thm-main-single}
        \sup_{u \in \sfK} \|\Psi^\dagger(u) - \Psi(u)\|_{C^{s'}(\Omega)} < \varepsilon,
    \end{equation}
    with a single hidden layer $(M=1)$ with any single basis function pair 
$\psi_{1,1} := \eta_1$ and $\phi_{1,1} := \eta_2$ satisfying 
Assumptions \ref{ass:single-basis-pair}.
\end{theorem}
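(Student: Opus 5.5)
The strategy follows the averaging neural operator argument of \cite{lanthaler2025nonlocality}. The nonlocal term $\la T v, \eta_1\ra$ will carry a finite vector of ``encoded'' functionals of the input, broadcast to every $x$ through the nonzero function $\eta_2$. The positional encoding $x$, carried through $W$, will let the projection network $Q$ reconstruct the output.

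\textbf{Step 1: finite-dimensional reduction.} Fix $\varepsilon>0$. By \ref{it:smooth-conv} choose $\delta$ with $\sup_{u\in\sfK}\|f_\delta(\Psi^\dagger(u))-\Psi^\dagger(u)\|_{C^{s'}}<\varepsilon/3$. By \ref{it:proj-conv} choose $J$ so that the truncation $\sum_{j\le J}\alpha_j(u)\xi_j$, with $\alpha_j(u):=\la f_\delta(\Psi^\dagger(u)),\xi_j\ra$, is within $\varepsilon/3$ uniformly on $\sfK$. By \ref{it:cont-func} and continuity of $\Psi^\dagger$, each $\alpha_j:\sfK\to\R$ is continuous on a compact set, hence bounded and uniformly continuous. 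It then remains to approximate the map $u\mapsto\sum_{j\le J}\alpha_j(u)\xi_j(x)$ to accuracy $\varepsilon/3$ in $C^{s'}$.

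\textbf{Step 2: encoding the coefficients.} This is the main obstacle. We must realise (approximately) $\alpha_j(u)$ as functions of quantities of the form $\la \cR(u),\eta_1\ra$, built with pointwise networks and one weighted average. I would prove a lemma in the spirit of \cite{lanthaler2025nonlocality}: continuous functionals on a compact $\sfK\subset C^s(\Obar)$ can be uniformly approximated by $\Phi\bigl(\int_\Omega R(u(x),x)\,\eta_1(x)\dx\bigr)$, where $R$ and $\Phi$ are finite neural networks.

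The idea is as follows. Since $\eta_1>0$ a.e., is continuous and is bounded (\ref{enum:eta1-1}--\ref{enum:eta1-4}), the map $u\mapsto \int g(u(x),x)\eta_1(x)\dx$ over a rich enough family of $g$ separates points of $\sfK$. For example, take $g(u,x)=u\,\rho_i(x)$ with $\rho_i$ approximating $\mathbf{1}_{\text{small cubes}}/\eta_1$ on the set where $\eta_1$ is bounded below, with error controlled by the measure of the set where $\eta_1$ is small.

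Compactness then gives a finite collection $\{g_i\}_{i\le N}$ whose weighted averages determine $u$ up to the modulus of continuity of $\alpha$: local averages over a fine partition recover $u$ in sup norm, because $\sfK$ is equicontinuous. We then define $\alpha_j\approx\tilde\alpha_j(\text{averages})$ on a compact set of $\R^N$, with $\tilde\alpha_j$ continuous (Tietze extension), and approximate it by a network. The functions $g_i$ are approximated by networks in $(u(x),x)$; $L^\infty$ approximation suffices, because $\eta_1$ is bounded.

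\textbf{Step 3: assembling the NNO.} We need $\phi_{1,1}=\eta_2$ inside the single hidden layer. Set the channels of $\cR(u)(x)$ to contain $x$, the values $g_i(u(x),x)$, and padding. In $\cL_1$, choose $W$ so that $x$ passes through, and choose $T$ so that the nonlocal term places $\la g_i,\eta_1\ra\,\eta_2(x)$ in separate channels. Apply $\sigma$, which by Assumption \ref{ass:sig-eta} (with $s_\sigma=s'$) can be made to act approximately as the identity in $C^{s'}$ on a bounded set.

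Finally, $Q$ recovers the averages by dividing by $\eta_2(x)$. This uses \ref{enum:eta2-5}: $\eta_2\in C^{s'}(\Obar)$ and is nonvanishing on the compact set $\Obar$, hence bounded away from zero, so $x\mapsto 1/\eta_2(x)$ is $C^{s'}$. The network $Q$ approximates, in $C^{s'}$ on a compact set of $(x,z)$, the smooth map $(x,z)\mapsto\sum_j\tilde\alpha_j(z/\eta_2(x))\,\xi_j(x)$. To make this map $C^{s'}$ in $z$, I would first replace $\tilde\alpha_j$ by a smooth network approximation. The functions $\xi_j\in C^{s'}(\Obar)$ make it $C^{s'}$ in $x$, and $C^{s'}$ universal approximation by smooth non-polynomial networks applies.

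Since the output depends on $x$ only through smooth compositions and $z$ is constant in $x$, $C^{s'}$ errors in $Q$ translate into $C^{s'}(\Omega)$ errors of the output. Combining the three $\varepsilon/3$ pieces yields \eqref{eqn:thm-main-single}.
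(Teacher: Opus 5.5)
Your argument is correct. It shares the paper's skeleton: Lemma \ref{lem:decomp}, then $\eta_1$-weighted integrals of pointwise networks broadcast by $\eta_2$, then division and reconstruction. You execute the two key steps differently, however.

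\textbf{Encoding the coefficient functionals.} The paper first extends each $\alpha_j$ to a continuous functional on all of $L^1(\Omega;\R^k)$ via the mollifier $\cM_\delta$ of \cite{lanthaler2025nonlocality}. Then, in Lemma \ref{lemma:alpha-approx-single}, it mollifies the input and projects onto a smooth orthonormal basis $\{\zeta_i\}$ of $L^2$, so the reconstruction $c\mapsto\alpha(\sum_i c_i\zeta_i)$ is automatically continuous on the whole coefficient space. You stay on $\sfK$ and use cell averages. This is more elementary, but two points need to be made precise.

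First, the averages map $a:\sfK\to\R^N$ is not injective, so Tietze alone does not produce $\tilde\alpha_j$. Instead, combine the bound $|\alpha_j(u)-\alpha_j(u')|\le\omega\bigl(2\omega_{\sfK}(h)+|a(u)-a(u')|_\infty\bigr)$ with a partition of unity over a finite net of $a(\sfK)$. Here $\omega_{\sfK}$ is the modulus of equicontinuity of $\sfK$ and $h$ is the cell diameter. A simpler alternative is Stone--Weierstrass in $C(\sfK)$, applied to polynomials in the functionals $u\mapsto\int_\Omega u\cdot\rho\,\eta_1\dx$ with $\rho\in C_c(\Omega;\R^k)$. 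These functionals separate points of $\sfK$ because $\eta_1>0$ a.e., and this route makes the $1/\eta_1$ weights unnecessary.

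Second, in the cell-average route with $s\ge1$, the modulus $\omega$ of $\alpha_j$ must be taken with respect to the sup norm. This is justified only because the $C^s$ and $C^0$ topologies coincide on the compact set $\sfK$, and you should say so.

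\textbf{Division by $\eta_2$.} The paper uses an exactly constant channel whose $\eta_1$-average, rescaled by $1/\|\eta_1\|_{L^1(\Omega)}$, reproduces $\eta_2(x)$ exactly. The output network then approximates the $C^\infty$ map $(a,b,e)\mapsto Q_0(a/b)\,e$ on a box, and $x$ enters only through $R_\eta$. You instead pass $x$ through $W$ and let $Q$ approximate the $C^{s'}$ map $(x,z)\mapsto\sum_j\tilde\alpha_j(z/\eta_2(x))\xi_j(x)$ on $\Obar\times Z$. This is equally valid since $1/\eta_2\in C^{s'}(\Obar)$, and it avoids the constant channel.

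Finally, you can drop the identity-approximation step for $\sigma$. Absorb the first affine layer of $Q$ into $W$, $T$, $b$ and place the remaining layers in $\cQ$, as the paper does.
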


The second theorem addresses the slightly more general case where the Smoothed Approximation Assumption only holds for functions in $C^{s'}(\Omega)$ rather than in $C^{s'}(\Obar)$. It also builds the approximation in the proof by using basis functions \textit{contained in the NNO architecture itself}. This justifies the practical intuition that by choosing a set of basis functions to use in the NNO architecture, one is projecting onto the span of the functions as part of the approximation.
\begin{theorem}[Approximation Using NNO Basis Elements]\label{thm:main-multiple}
    Let $\Omega$ be a bounded domain in $\Rd$ with Lipschitz boundary. Let $\Psi^\dagger: C^s(\Obar;\R^k) \to C^{s'}(\Obar; \R^{k'})$ be a continuous operator for integers $s,s'\geq 0$, and $\sfK \subset C^{s}(\Obar;\R^k)$ be a compact subset. Assume that $\Psi^\dagger$ satisfies the Smoothed Approximation Assumption \ref{ass:smoothed_appr} in $C^{s'}(\Omega; \R^{k'})$ with respect to some sequence of functions $\{\xi_j\}_{j=1}^\infty$. If these functions are also such that $\{\xi_j\}_{j=1}^\infty \subset C^{s'}(\Omega)$, then for any $\varepsilon >0$, there exists a nonlocal neural operator $\Psi$ as defined in Definition \ref{def:NNO}, with Assumption \ref{ass:sig-eta} on the activation $\sigma$ satisfied with $s_\sigma = s'$, such that 
    \begin{equation}\label{eqn:thm-main-multiple}
        \sup_{u \in \sfK} \|\Psi^\dagger(u) - \Psi(u)\|_{C^{s'}(\Omega)} < \varepsilon,
    \end{equation}
    where the final hidden layer contains $\{\xi_j\}_{j=1}^J$ as outer basis elements for some $J$; i.e.~$\{\xi_j\}_{j=1}^J \subset \{\phi_{L,m}\}_{m=1}^M$ with corresponding inner basis functions $\{\psi_m: \phi_m \equiv \xi_j, j \leq J\}$ satisfying \ref{enum:eta1-2}- \ref{enum:eta1-4} in Assumptions \ref{ass:single-basis-pair} and an earlier hidden layer contains at least one basis function pair $\eta_1$ satisfying \ref{enum:eta1-1}-\ref{enum:eta1-4} and $\eta_2$ satisfying \ref{enum:eta1-2}-\ref{enum:eta1-4}.
     
\end{theorem}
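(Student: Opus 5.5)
The plan is to reduce the problem to approximating finitely many continuous scalar functionals of the input, and then to realise these functionals with the NNO's own nonlocal channels. The final hidden layer will carry them in front of $\xi_1,\dots,\xi_J$. The first step is an error split. Fix $\varepsilon>0$. By \ref{it:smooth-conv}, choose $\delta$ with $\sup_{u\in\sfK}\|f_\delta(\Psi^\dagger(u))-\Psi^\dagger(u)\|_{C^{s'}(\Omega)}<\varepsilon/3$. By \ref{it:proj-conv}, choose $J$ with
\[
\sup_{u\in\sfK}\Big\|f_\delta(\Psi^\dagger(u))-\sum_{j=1}^J\alpha_j(u)\xi_j\Big\|_{C^{s'}(\Omega)}<\varepsilon/3, \qquad \alpha_j(u):=\la f_\delta(\Psi^\dagger(u)),\xi_j\ra_{L^2(B)} .
\]
By \ref{it:cont-func} and continuity of $\Psi^\dagger$, each $\alpha_j:\sfK\to\R^{k'}$ is continuous on the compact set $\sfK$. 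It then suffices to build an NNO whose output is within $\varepsilon/3$ of $\sum_{j\le J}\alpha_j(u)\xi_j$ in $C^{s'}(\Omega)$. Because the $\xi_j$ are fixed, it is enough to produce scalars $\tilde\alpha_j(u)$ with $\sup_{u}|\alpha_j(u)-\tilde\alpha_j(u)|<\varepsilon/(3J\max_j\|\xi_j\|_{C^{s'}(\Omega)})$.

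The second step extracts finite-dimensional features. I will use the earlier hidden layer with the pair $\eta_1\ge0$, $\eta_2$, following the averaging neural operator argument of \cite{lanthaler2025nonlocality}. The lifting $R$ maps $(u(x),x)$ to many channels, each a neural-network surrogate for a polynomial or test-function feature $g_i(u(x),x)$. The inner products $\la g_i(u(\cdot),\cdot),\eta_1\ra$ then form a finite family of continuous functionals. Since $\eta_1>0$ a.e. and is bounded and continuous, functionals of the form $u\mapsto\int g(u(x),x)\eta_1(x)\dx$ separate points of $\sfK$: pointwise values can be localised, because the weight $\eta_1$ vanishes only on a null set. By Stone--Weierstrass, finite-dimensional continuous functions of such features approximate each $\alpha_j$ uniformly on $\sfK$.

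The next task is to carry these scalars through the layers. The outputs $\la T v,\eta_1\ra\eta_2(x)$ are functions, not constants, so the scalar must be recovered from them. I would place $\eta_2$ in one channel (via a learned feature) and use the identity approximation of Assumption \ref{ass:sig-eta} to pass values through $\sigma$ nearly linearly. At the final hidden layer, inner products against $\psi_m$ with $\phi_m=\xi_j$ then yield the coefficient. Concretely, a channel approximating $h_j(u)\,p(x)$ is formed, with $p$ chosen so that $\la p,\psi_m\ra=1$; this is possible since $\psi_m$ is nonzero on a positive-measure set. That channel produces $\tilde\alpha_j(u)\xi_j(x)$ before the activation. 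Finally, $\sigma$ and $Q$ are taken near-identity in $C^{s'}$ on the bounded range, using $s_\sigma=s'$ and $\sigma(0)=0$, and the chain rule controls the resulting $C^{s'}$ error. The function-times-scalar products that arise are approximated by small networks on compact sets (Appendix \ref{sec:finite-NN}).

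I expect the main obstacle to be the $C^{s'}(\Omega)$ control in the last step. The $\xi_j$ are smooth only in the interior and may be unbounded up to the boundary in derivatives, so the near-identity composition $\sigma\circ(\cdot)$ cannot be applied on a compact range uniformly. I would handle this by requiring $\sup_\Omega\|\xi_j\|_{C^{s'}}<\infty$ implicitly through the convergence in \ref{it:proj-conv}, bounding the argument's range. Where that fails, I would instead use a final layer without activation on the $\xi$-part, that is, $W$ routing the nonlocal term linearly into $Q$ with $Q$ linear on that block.
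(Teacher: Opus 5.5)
Your error split is sound. Your feature step is also sound, and it takes a genuinely different, leaner route than the paper. The paper extends the coefficient maps to continuous functionals on $L^1$ (Step 2 of Lemma~\ref{lem:decomp}), mollifies and projects onto an auxiliary $L^2$ basis, and then re-weights by $\eta_1$ through Lemma~\ref{lem:int-approx}. You instead apply Stone--Weierstrass on $\sfK$ to functionals $u\mapsto\int_\Omega u_i\phi\,\eta_1\dx$ that are already $\eta_1$-weighted. These are continuous and separate points because $\eta_1\ge 0$ vanishes only on a null set, and this route avoids the $L^1$ machinery.

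The gap is the step you only assert: producing a channel close to $h_j(u)p(x)$ from the early-layer outputs $\ell_i(u)\eta_2(x)$. In Theorem~\ref{thm:main-multiple}, $\eta_2$ satisfies only \ref{enum:eta1-2}--\ref{enum:eta1-4}, not \ref{enum:eta2-5}. So it may vanish on a null set, change sign, and become arbitrarily small. Your plan to put $\eta_2$ in a channel and divide by it is the device of Lemma~\ref{lemma:alpha-approx-single}. Here it cannot be approximated by a network uniformly over $x\in\Omega$, because the ratio is unbounded where $\eta_2\approx 0$. You also constrain $p$ only by $\la p,\psi_m\ra=1$, so nothing keeps $p$ away from where $\eta_2$ degenerates.

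The missing idea is localisation, which the paper's proof of Lemma~\ref{lemma:alpha-approx} supplies.
\begin{enumerate}
\item Choose a ball $\Gamma\subset\Omega$ on which $\eta_2$ is bounded away from zero and $\psi_m$ has one sign. This is possible since both are continuous and nonzero a.e.
\item Take $p$ to be a bump supported in $\Gamma$, carried from the lifting layer as a positional channel and normalised so that $\la p,\psi_m\ra=1$.
\item Fix the sign so that $\eta_2\ge\eta_->0$ on $\Gamma$, and replace $a/b$ by a clipped $a/\max(b,\eta_-)$. The target $F_j(a/b)\,c$ on $(a,b,c)=(\ell\eta_2(x),\eta_2(x),p(x))$ is then a bounded continuous function on a compact box.
\item Off $\Gamma$, errors are multiplied by $|\tilde p|\approx 0$.
\end{enumerate}
The paper avoids division altogether. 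It picks $\Gamma$ so small that $\gamma\eta_2\approx 1$ there, and multiplies by a network approximation $\tilde R$ of a mollified indicator $\Gamma_{\delta_1}$. It then sets $T_{L,m^*}=1/\int_{\tilde\Gamma_{\delta_1}}\psi_{L,m^*}$, and controls $|T_{L,m^*}|\,\|\psi_{L,m^*}\|_{L^1(\Gamma)}$ and the shell error by taking $\delta_1$ small.

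Finally, the obstacle you single out is not one. Under the paper's convention, $C^{s'}(\Omega)$ carries the norm $\sum_{|\alpha|\le s'}\sup_\Omega|\partial^\alpha\cdot|$, so $\|\xi_j\|_{C^{s'}(\Omega)}<\infty$ is part of the hypothesis. The argument of the last activation therefore has bounded range, and Assumption~\ref{ass:sig-eta} with Fa\`a di Bruno closes the estimate as in \eqref{bnd:C1}. Your fallback of a final layer without activation is not permitted by Definition~\ref{def:NNO} in any case.
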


The following corollary justifies the use of the Dirichlet layer for approximation as well as shows that it enforces zero boundary conditions exactly. 

\begin{corollary}\label{cor:Laplacian-version}
    Let $\Omega$ be a bounded Lipschitz domain in $\Rd$, and let $\Psi^\dagger: C^s(\Obar; \R^k) \to C_0(\Obar; \R^{k'})$ be a continuous operator for nonnegative integer $s$. Let $\{\eta_j\}_{j=1}^\infty$ be the eigenfunctions of the Dirichlet Laplacian on $\Omega$. Let $\sfK \subset C^s(\Obar; \R^k)$ be a compact subset. Then for any $\varepsilon >0$, there exists a nonlocal neural operator $\Psi$ as defined in Definition \ref{def:NNO}, with Assumption \ref{ass:sig-eta} on the activation $\sigma$ satisfied with $s_\sigma = 0$, such that
    \begin{equation*}
        \sup_{u \in \sfK}\|\Psi^\dagger(u) - \Psi(u)\|_{C(\Omega)} < \varepsilon,
    \end{equation*}
    with the final NNO layer a Dirichlet layer as defined in Definition \ref{def:Dir-layer}, and 
    \[\sup_{y \in \p \Omega } |(T^* \Psi)(y)| = 0,\]
    where $T^*: H^1(\Omega) \to L^2(\p\Omega)$ is the trace operator. 
\end{corollary}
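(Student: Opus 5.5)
The plan is to obtain the corollary as a direct instance of Theorem \ref{thm:main-multiple} with $s'=0$, using Case \ref{it:DNO} of Proposition \ref{prop:cases} to supply the Smoothed Approximation Assumption. After that, a separate structural argument shows that the output vanishes on $\p\Omega$ whatever the parameters are.

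\emph{Step 1: approximation.} Since $\Psi^\dagger$ is continuous and $\sfK$ is compact, $\Psi^\dagger(\sfK)$ is compact in $C_0(\Obar;\R^{k'})$. Case \ref{it:DNO} of Proposition \ref{prop:cases}, applied componentwise, shows that $\Psi^\dagger(\sfK)$ satisfies Assumption \ref{ass:smoothed_appr} in $C_0(\Obar)$, and hence in $C(\Omega)$, with respect to $\xi_j=\eta_j$.

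We then check the basis hypotheses of Theorem \ref{thm:main-multiple}. By interior elliptic regularity, $\eta_j\in C^\infty(\Omega)\subset C(\Omega)$. Each $\eta_j$ is bounded, since Dirichlet eigenfunctions on bounded domains lie in $L^\infty$ (via the ultracontractivity bounds recorded in Appendix \ref{sec:eigfunc-Lap}). Each $\eta_j$ vanishes only on a null set, because a nonzero real-analytic function in $\Omega$ has a null zero set. This gives \ref{enum:eta1-2}--\ref{enum:eta1-4} for the inner functions $\psi_m=\eta_m$. The earlier hidden layer needs a pair with $\eta_1\ge0$. For this we take the principal eigenfunction: it can be chosen positive in $\Omega$ (Krein--Rutman, or the strong maximum principle), which gives \ref{enum:eta1-1}. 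If a pure Dirichlet architecture is desired, the earlier layers can themselves be Dirichlet layers containing $\eta_1$.

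Theorem \ref{thm:main-multiple} then yields an NNO whose final hidden layer uses $\{\eta_m\}_{m=1}^M$ as both inner and outer basis, i.e.\ a Dirichlet layer, with error below $\varepsilon$ in $C(\Omega)$. We must also ensure the final layer has bias $b_L=0$ and that $Q(0)=0$. I would revisit the construction in the proof of Theorem \ref{thm:main-multiple}: the final layer reproduces $\sum_j \alpha_j(u)\eta_j$ through $\sigma$ with zero bias, and $Q$ is a zero-preserving network approximating the identity, as in Assumption \ref{ass:sig-eta}. If the construction does introduce a bias, I would absorb it into earlier layers through a constant channel, which is harmless since $\sigma(0)=0$.

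\emph{Step 2: boundary condition.} With $b_L=0$, the output of the final layer is $\sigma\bigl(W_Lv(x)+\sum_m c_m(v)\eta_m(x)\bigr)$, and $Q(0)=0$. The difficulty is the term $W_Lv(x)$: it need not vanish on $\p\Omega$. I would therefore take $W_L=0$ in the Dirichlet layer, and I would check that the proof of Theorem \ref{thm:main-multiple} permits this, since its final layer only needs the projection $\sum_j\alpha_j\eta_j$. Then the pre-activation lies in $\operatorname{span}\{\eta_m\}_{m\le M}\subset H^1_0(\Omega)$. Because $\Omega$ is Lipschitz, $\eta_m\in C(\Obar)$ with $\eta_m|_{\p\Omega}=0$, so the pre-activation is continuous on $\Obar$ and vanishes on the boundary. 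Applying $\sigma$ (smooth, with $\sigma(0)=0$) and then $Q$ (continuous, with $Q(0)=0$) preserves both continuity up to $\Obar$ and the zero boundary values.

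The output is also in $H^1(\Omega)$: $\sigma$ and $Q$ are Lipschitz on bounded sets and the pre-activation lies in $H^1_0\cap L^\infty$, so by the chain rule their composition is in $H^1_0(\Omega)$. For functions in $H^1\cap C(\Obar)$ the trace is the boundary restriction, and it is zero.

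The main obstacle is Step 2's bookkeeping: confirming that Theorem \ref{thm:main-multiple}'s construction is compatible with $W_L=0$, $b_L=0$ and zero-preserving $Q$ without losing accuracy, together with the boundary continuity of $\eta_m$ on Lipschitz domains, which I would take from Appendix \ref{sec:eigfunc-Lap}.
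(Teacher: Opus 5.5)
Your argument follows the paper's: Case \ref{it:DNO} of Proposition \ref{prop:cases} feeds Theorem \ref{thm:main-multiple}, and the boundary condition holds because everything after the nonlocal term of the last hidden layer is a zero-preserving pointwise map. Two of your checks are more careful than the paper's one-line assertions: that every $\eta_m$ satisfies \ref{enum:eta1-2}--\ref{enum:eta1-4} (using analyticity for the null zero set), and that the trace claim holds because the output is continuous up to $\Obar$ and lies in $H^1_0(\Omega)$.

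The one step that would not go through as written is the bias.
\begin{itemize}
\item You insist on $b_L=0$, but Assumption \ref{ass:sig-eta} only gives an identity-approximating network $G$ with $G(0)=0$. It does not give one with vanishing internal biases.
\item In the construction of Lemma \ref{lemma:alpha-approx}, the last hidden layer is $\sigma\bigl(A_L\sum_m T_{L,m}\la v,\eta_m\ra\eta_m+b_L\bigr)$. Here $(A_L,b_L)$ and $\cQ$ are the first layer and the remaining layers of a single such network $\cI$. So $b_L\neq 0$ in general. Note that $W_L=0$ already holds there, which settles your concern about the $W_Lv$ term.
\item Your fallback of absorbing $b_L$ through a constant channel does not help. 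A constant channel can reach the pre-activation only in two ways. Through $W_Lv$ it contributes the same constant, which is nonzero on $\p\Omega$. Through the nonlocal term it gets multiplied by $\eta_m$, so it cannot produce a constant.
\end{itemize}
The condition you actually need is that the pointwise map $z\mapsto Q(\sigma(A_Lz+b_L))=\cI(z)$ vanishes at $z=0$, and that is exactly $G(0)=0$. Replace ``$\sigma$ followed by $Q$'' with $\cI$, and your continuity and chain-rule argument goes through unchanged. (For GeLU you can even take $b_L=0$ exactly, since $\sigma(x)-\sigma(-x)=x$.)
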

\begin{proof}
    By Proposition \ref{prop:cases}, Case 2, $\{\eta_j\}_{j=1}^\infty$ satisfy the Smoothed Approximation Assumption with respect to $C_0(\overline{\Omega})$. Furthermore, the eigenfunctions of the Dirichlet Laplacian are in $C^\infty(\Omega) \cap H^1_0(\Omega)$, so Theorem \ref{thm:main-multiple} applies. We note that the ground state eigenfunction $\eta_1$ satisfies conditions \ref{enum:eta1-1}-\ref{enum:eta1-4} \cite[Section 6.5]{evans2022partial}. 
 By the construction in Lemma \ref{lemma:alpha-approx}, the final hidden layer uses $\{\eta_j\}_{j=1}^J$ as basis elements, with the remaining layers a finite-dimensional neural network that preserves $0$ with smooth activation. Thus, $\Psi$ enforces zero boundary condition on the output function in the trace sense. 
\end{proof}

\begin{remark}
The approximation in Corollary \ref{cor:Laplacian-version} is stated for $C(\Omega)$ only. For general Lipschitz $\Omega$, while the eigenfunctions are in $C^\infty(\Omega)$, the norm $\|\eta_j\|_{C^{s'}(\Omega)}$ can blow up as one approaches the boundary, for instance, at reentrant corners. This causes the Dirichlet eigenfunctions of the Laplacian and the space $C^{s'}_0(\Obar)$ to fail the Smoothed Approximation Assumption without additional assumptions on boundary regularity. \hfill$\lozenge$
\end{remark}

The key lemmas underpinning the proofs of Theorems \ref{thm:main-single} and \ref{thm:main-multiple} are Lemma \ref{lem:decomp}, which decomposes the true map into a sum of continuous functionals over the chosen basis, and Lemmas \ref{lemma:alpha-approx-single} and \ref{lemma:alpha-approx}, which are devoted to
approximation of the continuous functionals by an NNO with the architectures corresponding to each theorem. In Subsections \ref{ssec:decomp} and \ref{ssec:NNO-approx}, respectively, we describe and state these lemmas before returning to prove the two theorems in Subsection \ref{ssec:thm-proof}.

\subsection{True Map: Approximation by Sum of Continuous Functions} \label{ssec:decomp}

The first step to establishing the approximation result is Lemma \ref{lem:decomp}, which decomposes the true map $\Psi^\dagger$ into a finite sum of our basis elements $\eta_j$ with coefficients determined by continuous functionals on the input function. This step is similar to that of \cite{lanthaler2025nonlocality}, except that we allow for the choice of basis to be more general than that of the Fourier basis. We note that this result may be useful beyond the setting of this work since one could substitute a variety of bases of interest; some are detailed in Proposition \ref{prop:cases}.

The following lemma decomposes the true map into a sum of the basis elements $\{\eta_j\}_{j=1}^J$ weighted by continuous functionals from $L^1(\Omega;\R^k) \to \R$. 

\begin{restatable}{lemma}{lemdecomp}\label{lem:decomp}
    Let $\Omega$ be a bounded Lipschitz domain in $\Rd$, and let $s, s'$ be nonnegative integers. Let $\Psi^\dagger: C^s(\overline{\Omega}; \R^k) \to C^{s'}(\overline{\Omega};\R^{k'})$ be a continuous operator, and let 
$\{\eta_j\}_{j=1}^\infty \subset C^{s'}(\Omega)$. Let $\sfK \subset C^s(\overline{\Omega};\R^k)$ be a compact subset. Assume that 
$\sfK':=\Psi^\dagger(\sfK)$ satisfies the Smoothed Approximation Assumption \ref{ass:smoothed_appr} 
in $C^{s'}(\Omega)$ 
with respect to $\{\eta_j\}_{j=1}^\infty$. Then for any $\epsilon >0$, there exists $J \in \N$, and continuous functionals $\alpha_1, \dots, \alpha_J: L^1(\Omega; \R^k) \to \R$ such that the operator $\tilde{\Psi}^\dagger: L^1(\Omega;\R^k) \to C^{s'}(\Omega; \R^{k'})$, defined by
    \[\tilde{\Psi}^\dagger(u) = \sum_{j=1}^J \alpha_j(u)\eta_j\]
    satisfies
    \[\sup_{u \in \sfK} \|\Psi^\dagger(u) - \tilde{\Psi}^\dagger(u)\|_{C^{s'}(\Omega)} \leq \epsilon.\]
\end{restatable}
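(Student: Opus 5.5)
The plan is to follow the averaging-neural-operator argument of \cite{lanthaler2025nonlocality}, with the Fourier basis replaced by $\{\eta_j\}$. The key point is that the coefficient functionals $w\mapsto \la f_\delta(w),\eta_j\ra_{L^2(B)}$ are continuous on the output space, but must be turned into functionals that are continuous on $L^1(\Omega;\R^k)$. The decomposition has three error terms: smoothing, truncation, and extension of the coefficient functionals from $\sfK$ to $L^1$. Throughout, work componentwise in the $k'$ output channels, with $\eta_j$ multiplied by a standard basis vector of $\R^{k'}$; this only changes the indexing.

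\emph{Step 1 (smoothing and truncation).} Fix $\epsilon>0$ and set $\sfK'=\Psi^\dagger(\sfK)$. This set is compact in $C^{s'}(\Obar)$, hence in $C^{s'}(\Omega)$, because $\Psi^\dagger$ is continuous. By \ref{it:smooth-conv}, choose $\delta$ with $\sup_{w\in\sfK'}\|f_\delta(w)-w\|_{C^{s'}(\Omega)}<\epsilon/3$. By \ref{it:proj-conv}, choose $J$ so that
\[
\sup_{w\in\sfK'}\Big\|f_\delta(w)-\sum_{j=1}^J \la f_\delta(w),\eta_j\ra_{L^2(B)}\eta_j\Big\|_{C^{s'}(\Omega)}<\epsilon/3 .
\]
Define $\beta_j(u):=\la f_\delta(\Psi^\dagger(u)),\eta_j\ra_{L^2(B)}$ for $u\in\sfK$. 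By \ref{it:cont-func} and continuity of $\Psi^\dagger$, each $\beta_j:\sfK\to\R$ is continuous for the $C^s$ topology. Hence $\sum_{j\le J}\beta_j(u)\eta_j$ is within $2\epsilon/3$ of $\Psi^\dagger(u)$, uniformly in $u\in\sfK$.

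\emph{Step 2 (change of topology).} The set $\sfK$ is compact in $C^s(\Obar;\R^k)$, and this space embeds continuously and injectively into $L^1(\Omega;\R^k)$ because $\Omega$ is bounded. The identity map from $\sfK$ with the $C^s$ topology to $\sfK$ with the $L^1$ topology is therefore a continuous bijection from a compact space to a Hausdorff space, hence a homeomorphism. So each $\beta_j$ is also continuous on $\sfK$ viewed as a subset of $L^1(\Omega;\R^k)$.

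\emph{Step 3 (extension).} The set $\sfK$ is a compact, hence closed, subset of the metric space $L^1(\Omega;\R^k)$. The Tietze extension theorem, or Dugundji's theorem for real-valued maps on metric spaces, gives continuous $\alpha_j:L^1(\Omega;\R^k)\to\R$ with $\alpha_j|_{\sfK}=\beta_j$. Then $\tilde\Psi^\dagger(u)=\sum_{j\le J}\alpha_j(u)\eta_j$ agrees on $\sfK$ with the truncated expansion, so the error is at most $2\epsilon/3<\epsilon$. Each $\eta_j$ lies in $C^{s'}(\Omega)$, and the estimates above show that $\|\eta_j\|_{C^{s'}(\Omega)}$ is finite whenever the coefficient is nonzero, so $\tilde\Psi^\dagger$ maps into $C^{s'}(\Omega;\R^{k'})$.

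The main obstacle I expect is Step 2 together with the bookkeeping of domains. The functionals are initially continuous only with respect to the strong $C^s$ input topology and the $\cB(\Omega)$ output topology. The point to check carefully is that compactness of $\sfK$ lets us pass to the weak $L^1$ topology without losing continuity. One must also confirm that \ref{it:cont-func} is applied in the same Banach space $C^{s'}(\Omega)$ in which $\sfK'$ sits, which is legitimate since $C^{s'}(\Obar)$ embeds continuously into it.
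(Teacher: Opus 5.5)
Your proof is correct. It differs from the paper's at the point where the coefficient functionals are made continuous on $L^1(\Omega;\R^k)$.

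\textbf{The paper's route.} The paper keeps $\beta_j(v)=\la f_\delta(\Psi^\dagger(v)),\eta_j\ra_{L^2(B)}$ defined on all of $C^s(\Obar;\R^k)$. It sets $\alpha_j:=\beta_j\circ\cM_\delta$, where $\cM_\delta:L^1(\Omega;\R^k)\to C^s(\Obar;\R^k)$ is the continuous smoothing operator of \cite[Lemma A.2]{lanthaler2025nonlocality}, with $\cM_\delta u\to u$ uniformly on $\sfK$. It then uses compactness of $\bigcup_{\delta\le\delta_0}\cM_\delta(\sfK)$ \cite[Lemma A.4]{lanthaler2025nonlocality} to obtain a common modulus of continuity. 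Finally it absorbs the extra error $|\beta_j(u)-\beta_j(\cM_\delta u)|$ by taking $\delta$ small relative to $J\max_j\|\eta_j\|_{C^{s'}(\Omega)}$.

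\textbf{Your route.} You observe that the $C^s$ and $L^1$ topologies coincide on the compact set $\sfK$, since a continuous injection from a compact space into a Hausdorff space is a homeomorphism onto its image. You then extend $\beta_j|_{\sfK}$ to all of $L^1$ by Tietze.

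\textbf{Comparison.} Your argument is shorter and needs neither of the cited auxiliary lemmas. Because your $\alpha_j$ agree exactly with $\beta_j$ on $\sfK$, there is no third error term and the estimate does not depend on $\max_j\|\eta_j\|_{C^{s'}(\Omega)}$. The paper's construction instead gives an explicit, constructive $\alpha_j$: a fixed linear smoothing followed by the original coefficient functional. Its behaviour off $\sfK$ is therefore natural rather than that of an arbitrary Tietze extension. However, Lemmas \ref{lemma:alpha-approx-single} and \ref{lemma:alpha-approx} use only continuity of $\alpha_j$ on all of $L^1$, including at points near but outside $\sfK$, and your extension provides exactly that. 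So your $\alpha_j$ work equally well downstream.

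\textbf{One minor point.} Your closing remark says the estimates show $\|\eta_j\|_{C^{s'}(\Omega)}<\infty$ whenever the coefficient is nonzero. They do not show this: they only control the finite partial sum, in which singular parts of individual $\eta_j$ could cancel, and off $\sfK$ your extended coefficients are arbitrary anyway. The remark is also unnecessary. Under the paper's convention, $C^{s'}(\Omega)$ is the Banach space of functions with finite norm, so the hypothesis $\eta_j\in C^{s'}(\Omega)$ already gives this.
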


\begin{remark} 
The purpose of defining the approximation $\tilde{\Psi}^\dagger$ as a mapping from $L^1(\Omega;\R^k)$ is to avoid needing to modify the proof for various values of $s$. Note that $C^s(\Omega;\R^k)$ is continuously embedded in $ L^1(\Omega;\R^k)$. \hfill$\lozenge$
\end{remark}

We defer the proof of Lemma \ref{lem:decomp} to Appendix \ref{appd:lem-decomp}.

\subsection{NNO Approximation of Functionals 
and Basis Functions}\label{ssec:NNO-approx}

In this subsection we describe and state the two lemmas that are key to proving Theorems \ref{thm:main-single} and \ref{thm:main-multiple}. These lemmas show that we can approximate each term $\alpha_j(u)\eta_j$ in the sum in Lemma \ref{lem:decomp} by a NNO. The first lemma, Lemma \ref{lemma:alpha-approx-single}, achieves the approximation in the setting of a single hidden layer and basis function pair, corresponding to Theorem \ref{thm:main-single}. The second lemma, Lemma \ref{lemma:alpha-approx}, applies to the slightly more general setting of Theorem \ref{thm:main-multiple} and uses the basis functions in the NNO architecture to represent the $\eta_j$ in \ref{lem:decomp}. The proofs of both lemmas may be found in Appendix \ref{appd:alpha-approx}.

\begin{restatable}{lemma}{lemalphaapproxsingle}\label{lemma:alpha-approx-single}
    Let $\alpha: L^1(\Omega; \R^k) \to \R$ be a continuous functional and let $\sfK \subset L^1(\Omega; \R^k)$ be a compact set consisting of bounded functions $\sup_{u \in \sfK}\|u\|_{L^\infty(\Omega)}< \infty$. Assume $\Omega \subset \Rd$ is a bounded Lipschitz domain. Assume, in addition, that $\eta \in C^{s'}(\Obar; \R^{k'})$ for integer $s' \geq 0$.
Then for any $\varepsilon >0$ there exists a nonlocal neural operator $\tilde{\alpha}: L^1(\Omega; \R^k) \to C^{s'}(\Omega; \R^{k'})$ as defined in Definition \ref{def:NNO}, with Assumption \ref{ass:sig-eta} on the activation satisfied with $s_\sigma = s'$, such that 
    \begin{equation}\label{eqn:lemma-alpha-approx-single}
        \sup_{u \in \sfK} \|\alpha(u) \eta - \tilde{\alpha}(u)\|_{C^{s'}(\Omega)} < \varepsilon
    \end{equation}
with a single hidden layer $(M=1)$ and with any single basis function pair $\psi_{1,1} := \eta_1$ and $\phi_{1,1} := \eta_2$ satisfying Assumptions \ref{ass:single-basis-pair}.
    
\end{restatable}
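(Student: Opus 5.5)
The plan follows the averaging-neural-operator strategy of \cite{lanthaler2025nonlocality}, modified so that all nonlocal information passes through the single pair $(\eta_1,\eta_2)$. First I will reduce $\alpha$ to a function of finitely many weighted moments. Set $B_\infty:=\sup_{u\in\sfK}\|u\|_{L^\infty}$. Since $\sfK$ is compact in $L^1$, for any $\delta>0$ there are finitely many weights $w_1,\dots,w_N\in L^\infty(\Omega)$ and a continuous $F_0:\R^{kN}\to\R$ such that
\[
\Bigl|\alpha(u)-F_0\Bigl(\int w_i u\Bigr)_i\Bigr|<\delta \quad\text{for all } u\in\sfK .
\]
One way to obtain this is to take the $w_i$ to be normalized indicators of a fine partition of $\Omega$ and to define $F_0$ as $\alpha$ composed with the piecewise-constant reconstruction; the uniform $L^1$ error of the cell-average projection on the compact set $\sfK$ is small, and $\alpha$ is uniformly continuous on a compact neighbourhood of $\sfK$. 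Next I will rewrite each weight as $w_i\approx\eta_1 h_i$ with $h_i\in C_c(\Omega)$. This is possible because $\eta_1$ is bounded, continuous and positive a.e. by \ref{enum:eta1-1}--\ref{enum:eta1-4}, so $\eta_1 C_c(\Omega)$ is dense in $L^1(\Omega)$. Since $|\int(w_i-\eta_1h_i)u|\le \|w_i-\eta_1h_i\|_{L^1}B_\infty$, the uniform continuity of $F_0$ on a compact set lets this replacement cost at most another $\delta$. Finally, I will replace $F_0$ by a smooth $F$ (a mollification or a neural network) that is uniformly close to $F_0$ on the relevant compact set of moment vectors.

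Next I will build the NNO. The lifting $R(u(x),x)$ will output the channels $(h_i(x)u(x))_i$, a constant channel $1$, and the coordinates $x$. A neural network can approximate $(u,x)\mapsto (h_iu,1,x)$ uniformly on the compact set $[-B_\infty,B_\infty]^k\times\Obar$, which is enough because only integrals of these channels against $\eta_1$ matter. In the hidden layer, $T_{1,1}$ will select the moment channels and the constant channel, scaled by a small factor $h$. Then
\[
z(u):=\la T v,\eta_1\ra=h\Bigl(\int\eta_1h_iu,\ \int\eta_1\Bigr),
\]
and the last entry of $z$ is a fixed positive number. The matrix $W$ will pass $x$ through with scale $h$. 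I will pick $t_0$ with $\sigma'(t_0)\ne0$, which exists because $\sigma$ is $C^\infty$ and nonpolynomial, and set $b=t_0$. The layer output is then
\[
v(x)=\sigma\bigl(t_0+z(u)\,\eta_2(x)\bigr)\ \text{and}\ \sigma(t_0+hx).
\]
For $h$ small, each entry lies in a neighbourhood of $t_0$ on which $\sigma$ has a smooth local inverse. So $x$, $\eta_2(x)$ (from the constant channel) and the products $z_i\eta_2(x)$ are smooth functions of $v(x)$. Since $\eta_2$ is continuous and nonvanishing on the compact set $\Obar$ by \ref{enum:eta2-5}, it is bounded away from zero, and dividing gives back $z$ smoothly. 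Hence there is a $C^{s'}$ map $G$ with $G(v(x))=F(z/h)\,E\eta(x)$, where $E\eta$ is a $C^{s'}$ extension of $\eta$ to a neighbourhood of $\Obar$, which exists because $\Omega$ is Lipschitz.

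The projection $Q$ will be a neural network approximating $G$ in $C^{s'}$ on the compact set $\{v(x):u\in\sfK,\ x\in\Obar\}$. The main obstacle is to control the $C^{s'}(\Omega)$ norm of $Q\circ v-G\circ v$ uniformly in $u$. By the chain rule (Fa\`a di Bruno), this error is bounded by $\|Q-G\|_{C^{s'}}$ times a polynomial in $\|v\|_{C^{s'}(\Obar)}$. The latter is bounded uniformly over $u\in\sfK$, since $v$ depends on $x$ only through $\eta_2\in C^{s'}(\Obar)$, through $x$, and through the bounded vector $z$. So what remains is a $C^{s'}$ universal approximation of smooth functions on compact sets, which holds for smooth nonpolynomial $\sigma$; Assumption \ref{ass:sig-eta} with $s_\sigma=s'$ can be used to absorb any identity-type pieces, such as the passage of $x$. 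Summing the errors from $F_0$, from the weights, from $F$, and from $Q$ and choosing $\delta$ small then gives \eqref{eqn:lemma-alpha-approx-single}. A secondary point is to make sure that the small scaling $h$ does not inflate the derivatives of the local inverse of $\sigma$ in an uncontrolled way; this should be harmless because $h$ is fixed before $Q$ is chosen.
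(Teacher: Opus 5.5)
Your proposal is correct, and it reaches the result by a somewhat different route from the paper in two places.

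\emph{Finite-dimensional reduction.} The paper mollifies $u$ and expands in an orthonormal basis, $\alpha_{\delta,J}(u)=\alpha(\sum_{j\le J}\la u,\xi_{j,\delta}\ra\xi_j)$, so that the weights $\xi_{j,\delta}$ are continuous on $\Obar$. It then invokes Lemma \ref{lem:int-approx} (division by $\max(\eta_1,\gamma)$ plus a boundary cutoff) to write $\la u,\xi_{j,\delta}\ra$ as approximately $\int R_j(u(y),y)\eta_1(y)\dy$. Your cell averages with $L^\infty$ weights, combined with density of $\eta_1 C_c(\Omega)$ in $L^1(\Omega)$, use the same mechanism and are slightly more elementary. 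One small correction: a compact subset of $L^1$ has no compact neighbourhood. What you actually need, and what follows from a standard sequential-compactness argument, is this: for every $\epsilon>0$ there is $\rho>0$ such that $|\alpha(u)-\alpha(w)|<\epsilon$ whenever $u\in\sfK$ and $\|u-w\|_{L^1}<\rho$.

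\emph{Decoding the hidden layer.} This is the more substantive difference. Both proofs use the exact constant channel to produce $\eta_2(x)$ and divide it out. The paper, however, never inverts $\sigma$. It takes a network $\cN$ approximating $G(a,b,e)=Q_0(a/b)\,e$ in $C^{s'}$ on a box. It then folds the first affine map $(A_1,b_1)$ of $\cN$ into the hidden layer; this is admissible because $A_1$ commutes with the inner product against $\eta_1$, so $A_1W_1$ and $A_1T_{1,0}$ are legitimate $W$ and $T$. The projection $\cQ$ realizes the remaining layers of $\cN$. The pre-activation $z(x)=(\widehat c_j\eta_2(x),\eta_2(x),R_\eta(x))$ is therefore fed exactly into $\cN$, and $\eta$ is carried by a lifting channel $R_\eta\approx\eta$ in $C^{s'}(\Obar)$ rather than being reconstructed from $x$.

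You instead make the hidden layer a small-amplitude, locally invertible encoding around a point with $\sigma'(t_0)\neq 0$, and push all nonlinearity into $\cQ$. This works. Your order of choices is consistent: lifting accuracies with a priori bounds first, then $h$, then $Q$. The $h^{-1}$ factors inside $G$ are harmless, as you note.

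The cost is bookkeeping that the folding trick avoids:
\begin{itemize}
\item the local inverse of $\sigma$;
\item a $C^{s'}$ extension $E\eta$;
\item a $C^{s'}(\Omega)$ estimate of $E\eta\circ\ell-\eta$ for the approximate identity $\ell$ carrying $x$.
\end{itemize}
You could drop the last two by lifting $R_\eta\approx\eta$ directly, as the paper does. What your route buys is modularity. It shows that a single nonlocal layer with any smooth non-polynomial activation is an information-preserving encoding, so any downstream $C^{s'}$ approximation result can be plugged in without matching the hidden layer to the first layer of the approximating network.
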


\begin{restatable}{lemma}{lemalphaapprox}\label{lemma:alpha-approx}
    Let $\alpha: L^1(\Omega; \R^k) \to \R$ be a continuous functional and let $\sfK \subset L^1(\Omega; \R^k)$ be a compact set consisting of bounded functions $\sup_{u \in \sfK}\|u\|_{L^\infty(\Omega)}< \infty$.  Assume $\Omega \subset \Rd$ is a bounded Lipschitz domain. Assume, in addition, that
 $\eta \in C^{s'}(\Omega; \R^{k'})$ for integer $s' \geq 0$. Then for any $\varepsilon >0$ there exists a nonlocal neural operator $\tilde{\alpha}: L^1(\Omega; \R^k) \to C^{s'}(\Omega; \R^{k'})$ as defined in Definition \ref{def:NNO}, with Assumption \ref{ass:sig-eta} on the activation satisfied with $s_\sigma = s'$, such that 
    \begin{equation}\label{eqn:lemma-alpha-approx}
        \sup_{u \in \sfK} \|\alpha(u) \eta - \tilde{\alpha}(u)\|_{C^{s'}(\Omega)} < \varepsilon,
    \end{equation}
where the final hidden layer contains $\eta$ as an outer basis element, i.e.~for some $m^* \leq M$, $\phi_{L,m^*} := \eta$ with a corresponding inner basis function $\psi_{L,m^*}\in L^1(\Omega)$ satisfying \ref{enum:eta1-2}-\ref{enum:eta1-4} in Assumptions \ref{ass:single-basis-pair} and an earlier hidden layer contains at least one basis function pair 
$\eta_1$ satisfying \ref{enum:eta1-1}-\ref{enum:eta1-4} and $\eta_2$ satisfying \ref{enum:eta1-2}-\ref{enum:eta1-4}.
\end{restatable}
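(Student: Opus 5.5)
The construction has two stages. The earlier layers produce, at every point $x$, a channel value that approximates the constant $\alpha(u)$. The final hidden layer then multiplies this constant by $\eta$ through its own basis pair $(\psi_{L,m^*},\phi_{L,m^*}=\eta)$.

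\textbf{Stage 1: approximating $\alpha(u)$ as a near-constant channel.} I would follow the averaging-neural-operator argument of \cite{lanthaler2025nonlocality}, with the constant basis replaced by the pair $(\eta_1,\eta_2)$. The lifting $R(u(x),x)$ keeps $u(x)$ and appends suitable features. The inner product with $\eta_1$ gives the weighted moments $\int_\Omega g(u(y),y)\eta_1(y)\,dy$. Conditions \ref{enum:eta1-1}--\ref{enum:eta1-4} make $\eta_1\,dy$ a finite measure equivalent to Lebesgue measure on $\Omega$, so these moments can stand in for the unweighted averages.

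The key point is that $\alpha$ is continuous on the compact set $\sfK\subset L^1$ of uniformly bounded functions. By a standard density argument it can therefore be approximated uniformly on $\sfK$ by $F(\int g_1(u)\eta_1,\dots,\int g_p(u)\eta_1)$, where the $g_i$ are neural-network features of $(u(y),y)$ and $F$ is continuous. Concretely, one can take $g_i(u,y)=u\cdot \chi_i(y)/\eta_1(y)$, with $\chi_i$ a partition of unity, and approximate the quotient on the set where $\eta_1$ is bounded below, the remaining part having small measure.

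The scalar moment is multiplied by $\eta_2(x)$, and $\eta_2$ need not be constant. To remove this I would place the moments in the channels of a hidden layer and follow with a finite-dimensional network (subsequent layers with $W$ only, no nonlocal term). That network approximates $F$ applied to the moments, producing a channel $c(u)$ that is constant in $x$ up to $\varepsilon$. Where the architecture forces the $\eta_2$ factor, I would recover the moments via the positional encoding, dividing by $\eta_2(x)$ on the region where it is nonzero.

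\textbf{Stage 2: producing $\alpha(u)\eta$ in the final layer.} I would choose $T_{L,m^*}$ and a channel carrying the approximately constant $c(u)$ so that
\[
\la T_{L,m^*}v,\psi_{L,m^*}\ra = c(u)\int_\Omega\psi_{L,m^*}\,dy .
\]
Setting $\kappa:=\int_\Omega\psi_{L,m^*}\,dy$ and rescaling by $1/\kappa$ makes the coefficient of $\phi_{L,m^*}=\eta$ approximately $\alpha(u)$. This requires $\kappa\ne 0$; I would take $\psi_{L,m^*}$ of one sign, or simply pick $T$ so that the relevant integral is nonzero. I would set $W_L=0$ on the output channel and $b_L=0$, so that the pre-activation is exactly $\alpha(u)\eta(x)$ up to $\varepsilon$.

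The activation must then be undone. Using Assumption \ref{ass:sig-eta}, I would make the projection $Q$ together with $\sigma$ approximate the identity in $C^{s'}$ on a ball containing the range of $\alpha(\sfK)\cdot\eta$, preserving zero. This range requires $\eta$ bounded, and $\sup|\eta|<\infty$ is not assumed; I would handle this by working on compact exhausting subsets of $\Omega$, or by taking $Q$ linear and absorbing $\sigma$ into the identity approximation.

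\textbf{Main obstacle.} I expect the hardest step to be the $C^{s'}(\Omega)$ error control in the final composition. The error $\varepsilon\cdot\|\eta\|_{C^{s'}}$ in the coefficient is harmless, but the chain-rule estimate for $\sigma$ composed with $\alpha(u)\eta$ involves derivatives of $\eta$ that may blow up near $\partial\Omega$. To avoid this I would first try putting $\sigma$ only before the nonlocal sum where possible. Failing that, I would exploit the zero-preserving exact identity available for ReLU-type networks ($\sigma(x)-\sigma(-x)=x$). The bias-free, zero-preserving structure from Stage 2 would then ensure that the output inherits the boundary behaviour of $\eta$.
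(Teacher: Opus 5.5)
There is a genuine gap in Stage 2: it works only when $\kappa=\int_\Omega\psi_{L,m^*}\,dy\neq 0$, and you cannot arrange this. The inner function $\psi_{L,m^*}$ is the one the architecture pairs with $\eta$, and it is assumed only to satisfy \ref{enum:eta1-2}--\ref{enum:eta1-4}. So it may change sign and have zero mean. This is exactly the case the lemma must cover for Corollary \ref{cor:Laplacian-version}, where $\psi_{L,m}=\phi_{L,m}=\eta_m$ are Dirichlet eigenfunctions. For instance, $\psi_{L,m^*}=\eta=\sin(2\pi x_1)\sin(\pi x_2)$ on the unit square, or the second eigenfunction $J_1(j_{1,1}r)\cos\theta$ on the disk, both have $\kappa=0$. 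Choosing $T_{L,m^*}$ cannot help. It is a constant matrix, so $\langle T_{L,m^*}v,\psi_{L,m^*}\rangle=T_{L,m^*}\int_\Omega v\,\psi_{L,m^*}\,dy$, and a channel that is (nearly) constant in $x$ is annihilated for every $T$.

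The missing idea is to give the channel entering the last layer a spatial profile. Use the positional encoding to build a weight $w$ with $\int_\Omega w\,\psi_{L,m^*}\,dy\neq 0$. Have the finite-dimensional network output $c(u)\,w(y)$ instead of $c(u)$, and normalize by $T_{L,m^*}=(\int_\Omega w\,\psi_{L,m^*}\,dy)^{-1}$. The paper does exactly this. It takes $w=\tilde{R}\approx\Gamma_{\delta_1}$, a smooth bump supported in a small ball on which $\psi_{L,m^*}$ keeps one sign. It lets $Q$ approximate $(c,t)\mapsto\alpha(\sum_j c_j\xi_j)\,t$ and sets $T_{L,m^*}=(\int_{\tilde{\Gamma}_{\delta_1}}\psi_{L,m^*}\,dy)^{-1}$; the shell $\Gamma\setminus\tilde{\Gamma}_{\delta_1}$ contributes an $O(\delta_1)$ error.

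The same localization also repairs a weak point in Stage 1. Here $\eta_2$ satisfies only \ref{enum:eta1-2}--\ref{enum:eta1-4}, so it may vanish on a null set inside $\Omega$ and tend to $0$ at $\partial\Omega$, as the Dirichlet ground state does. Then no network realizes $\widehat{c}_j\eta_2(x)/\eta_2(x)$ uniformly on $\Omega$. Your claim that the resulting channel is constant in $x$ up to $\varepsilon$ is therefore false as stated; at best it holds off a set of small measure, which you would have to track through the final integral. The paper avoids division entirely. It also chooses the ball so that $\|\gamma\eta_2-\mathbf{1}\|_{L^\infty(\Gamma)}<\epsilon_\gamma$ (Lemma \ref{lem:Gamma}). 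On the support of the bump the moments then arrive multiplied by approximately $1$, and the residual is controlled by the modulus of continuity of $\alpha$.

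Your moment construction in Stage 1 is otherwise essentially the paper's Lemma \ref{lem:int-approx}. Your ``main obstacle'' is not one. Under the paper's convention, $\|\eta\|_{C^{s'}(\Omega)}=\sum_{|\beta|\le s'}\sup_\Omega|\partial^\beta\eta|<\infty$, so $\eta$ and its derivatives are bounded. The identity-approximating network of Assumption \ref{ass:sig-eta}, together with Fa\`a di Bruno's formula, then gives the $C^{s'}$ bound directly. The ReLU identity is unavailable anyway, since Definition \ref{def:NNO} requires a $C^\infty$ activation.
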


\begin{remark}
Lemmas \ref{lemma:alpha-approx-single} and \ref{lemma:alpha-approx} are similar, but notable in two main differences. First, Lemma \ref{lemma:alpha-approx} is slightly more general in that it only requires that $\eta$ be $C^{s'}$ on $\Omega$ rather than on the closure of $\Omega$. In return, we sacrifice using only a single hidden layer and single basis function as in Lemma \ref{lemma:alpha-approx-single}. However, the proof for Lemma \ref{lemma:alpha-approx} is appealing from an intuition standpoint because the basis functions \textit{in the NNO itself} are actually used as the approximating basis, whereas in the proof of Lemma \ref{lemma:alpha-approx-single}, as well as several other operator learning approximation proofs, a finite-dimensional neural network is employed to approximate $\eta$ instead of using a basis function that is already part of the architecture. In practice, one never uses a single basis element, so we find it far more natural to view universal approximation as occurring through the lens of Lemma \ref{lemma:alpha-approx}. \hfill$\lozenge$
\end{remark}

\subsection{Proofs of Theorems \ref{thm:main-single} and \ref{thm:main-multiple}}\label{ssec:thm-proof}
We may now prove our main theorems. As the proofs are identical except for the use of different lemmas, we present them as one. 

\begin{proof}[\textit{Proof of Theorems \ref{thm:main-single} and \ref{thm:main-multiple}} ]
Fix any $\epsilon_0 >0$. By Lemma \ref{lem:decomp},
there is $J>0$ such that we may approximate $\Psi^\dagger$ by 
\[\tilde{\Psi}^\dagger(u) = \sum_{j=1}^J \alpha_j(u) \eta_j,\]
to obtain
\[\sup_{u \in \sfK} \|\Psi^\dagger(u) - \tilde{\Psi}^\dagger(u)\|_{C^{s'}(\Omega)} < \epsilon_0.\]
Then by Lemma \ref{lemma:alpha-approx-single} for Theorem \ref{thm:main-single}, respectively Lemma \ref{lemma:alpha-approx} for Theorem \ref{thm:main-multiple}, each $\alpha_j \eta_j$ may be approximated by an NNO $\tilde{\alpha}_j$ satisfying the appropriate architecture conditions corresponding to each theorem-lemma pair such that for any $\epsilon_1 > 0$,
\[ \sup_{u \in \sfK}\|\alpha_j(u)\eta_j - \tilde{\alpha}_j(u)\|_{C^{s'}(\Omega)} \leq \epsilon_1, \quad \text{ for all } j \in [J]. \]
Define $\Psi := \sum_{j=1}^J\tilde{\alpha}_j$, the parallelization and subsequent sum in the final affine layer of the component NNOs. Then we have that 
\begin{align*}
\sup_{u \in \sfK}& \|\Psi^\dagger(u) - \Psi(u) \|_{C^{s'}(\Omega)} = \sup_{u \in \sfK} \|\Psi^\dagger(u) - \sum_{j=1}^J \tilde{\alpha}_j(u)\|_{C^{s'}(\Omega)}\\
& \leq \sup_{u \in \sfK} \|\Psi^\dagger - \sum_{j=1}^J \alpha_j (u) \eta_j\|_{C^{s'}(\Omega)} + \|\sum_{j=1}^J \alpha_j(u) \eta_j - \sum_{j=1}^J \tilde{\alpha}_j(u) \|_{C^{s'}(\Omega)} \\
& \leq \epsilon_0 + J\max_{j \in [J]}\sup_{u \in \sfK} \|\alpha_j(u)\eta_j - \tilde{\alpha}_j(u) \|_{C^{s'}(\Omega)} \\
& \leq\epsilon_0 + J\epsilon_1.
\end{align*}
Choosing $\epsilon_0 < \frac{\epsilon}{2}$ and $\epsilon_1 < \frac{\epsilon}{2J}$ yields the results in \eqref{eqn:thm-main-single} and \eqref{eqn:thm-main-multiple}.
\end{proof}

\section{Numerical Experiments}\label{sec:num-exp}
In this section we consider two operator learning problems defined
by a coefficient to solution map in a PDE, one based on Darcy flow
on a square domain and the other a variant of the Helmholtz equation on a 
circular domain. We implement a neural operator that enforces Dirichlet 
boundary conditions absolutely and test it on the two example problems.
We demonstrate both the feasibility of our proposed method, and its
competitiveness, in terms of cost-accuracy trade-off, in relation to
other pre-existing methods.\footnote{All code used to obtain results in this section may be found at the Github repository 
\textcolor{blue}{\href{https://github.com/mtrautner/DNO}{https://github.com/mtrautner/DNO}}.}

We first detail the architecture implementation in Subsection \ref{ssec:arch}, and then we describe the data generation process for the Darcy and Helmholtz experiments in Subsections \ref{ssec:darcy} and \ref{ssec:helmholtz}, respectively. We present and discuss results from both examples in Subsection \ref{ssec:results-discussion}. 

\subsection{Architecture}\label{ssec:arch}

While the theory applies more generally, in implementation we make the natural choice of projecting onto the eigenfunctions of the Laplacian with Dirichlet boundary conditions on the domain associated with the application problem. In particular, we use the Dirichlet layer in Definition \ref{def:Dir-layer} as the final hidden layer in our architecture. By setting $W_\ell = 0$ and $b_\ell = 0$, this layer will enforce zero Dirichlet boundary conditions \textit{absolutely} for the output function, as proved in Corollary \ref{cor:Laplacian-version}. Additionally, we enforce zero bias in the final neural network layer $\cQ$. GeLU activations are used throughout the model which, we recall, satisfy Assumptions \ref{ass:sig-eta} for any $s_\sigma \geq 0$, including the requirement that $\sigma(0) = 0$.

We compare our method with a standard implementation of the FNO as well as three possible alternative methods to enforce the boundary conditions exactly via projection. Let $\{\eta_j\}_{j=1}^J$ be the first $J$ eigenfunctions of the Laplacian, and let the projection operator $P_J$ be defined via 
\begin{equation}\label{eqn:proj-op}
    P_J v = \sum_{j=1}^J \la \eta_j, v \ra_{L^2(\Omega;\R^{d_c} )} \eta_j. 
\end{equation}

For the first comparison, we project the output of the trained FNO $\Psi_{\text{FNO}}$ \textit{after training} to obtain the prediction $P_J \Psi_{\text{FNO}}(a)$ for input $a$. For the second comparison, we incorporate this projection as part of the pipeline \textit{during training}. For the third comparison, we bypass neural networks entirely and simply perform regularized least squares (ridge regression) on the coefficients of the input and output functions projected onto the first $J$ eigenfunctions of the Laplacian. For this basic comparison, while the input does not satisfy Dirichlet boundary conditions, the eigenfunctions
are still a basis for $L^2(\Omega)$. All of these approaches enforce the boundary condition absolutely. The last one corresponds to making a linear
approximation to the nonlinear map $\Psi$, whilst the proposed method and the other two comparisons
all result in an approximation of the map $\Psi$ which is nonlinear.

In what follows we aim to approximate $\Psi^\dagger: \cX(\Omega;\R^o) \to \cY(\Omega;\R^o)$ acting on $a$,
coefficient of a PDE, to produce $u$, solution of the PDE. We
assume we are given data $\{a_n, u_n\}_{n=1}^N$ with $a_n \sim \mu$ i.i.d.
and $u_n=\Psi^\dagger(a_n).$ The population objective function that
we aim to approximately minimize, by choice of parameters $\theta$ 
in $\Psi(\cdot;\theta): \cX(\Omega;\R^o) \to \cY(\Omega;\R^o)$, is
$$\mathsf{J}(\theta)=\mathbb{E}^{a \sim \mu} \frac{\|\Psi(a;\theta) - \Psi^\dagger(a)\|_{L^2(\Omega)}}{\|\Psi^\dagger(a)\|_{L^2(\Omega)}}.$$ 
In practice we approximate this empirically using the data to obtain
objective
\begin{equation*}
    \mathsf{J}^N(\theta) = \frac{1}{N} \sum_{n=1}^N \frac{\|\Psi(a_n;\theta) - u_n\|_{L^2(\Omega)}}{\|u_n\|_{L^2(\Omega)}}.
\end{equation*}

\subsection{Darcy Flow}\label{ssec:darcy}
Let $\Omega = [0,1]^2$ be the unit square. We are interested in the map from $a \in L^\infty(\Omega; \R)$ to $u \in H_0^1(\Omega; \R)$ given by 
\begin{align*}
    -\nabla \cdot (a \nabla u) & = 1, \quad x \in \Omega \\
    u & = 0, \quad x \in \partial \Omega. 
\end{align*}

In this example, we use the dataset associated with the original work on FNOs \cite{li_fourier_2021}. The coefficients $a$ are randomly generated from
$\mu := \psi_{\#} \cN(0,C)$ where $C=(-\Delta + 9I)^{-2}$ equipped
with zero Neumann boundary conditions to define the inverse of the
identity shifted Laplacian. The map $\psi$ is a Nemytskii operator defined
by a piecewise constant function taking values $3$ and $12$ on the negative and positive parts of the real line, respectively. The solution was obtained via a second-order finite-difference scheme on a $421 \times 421$ grid, as stated in \cite{li_fourier_2021}. In total the data set has $2048$ samples. We set aside $248$ of the samples for validation used only for hyperparameter tuning, and $200$ of the samples for test data, which were not used to choose hyperparameters. 

For this example, since the data are uniform and on the unit square, we used FNO layers in the DNO until the final layer to demonstrate the advantage of the DNO in modifying only the final layer. One can verify from Theorem \ref{thm:main-multiple} that combining different types of layers in this manner still satisfies the conditions for universality.

\subsection{Helmholtz} \label{ssec:helmholtz}
Let $\Omega $ be the unit disk in $\R^2$. We are interested in the map from $a \in L^\infty(\Omega; \R)$ and $b \in \R$ to the solution $u$ given by 
\begin{align*}
    -\Delta u - \omega^2 a u & = 0, \quad x \in \Omega \\
    u & = b, \quad x \in \partial \Omega, 
\end{align*}
where $\omega = \frac{5\pi}{2}$.
The coefficient $a$ is randomly generated as a sum of Gaussians. The number $n$ of Gaussians is uniformly 
sampled from integers in $[[2,7]]$, the centers of the Gaussians are uniformly sampled in $\Omega$, the amplitudes are sampled uniformly $\sim (2,10)$, and the standard deviation is sampled uniformly $\sim (0.05, 0.1)$. Following summation, the resulting field $\tilde{a}$ is normalized via 
\[
a = \frac{\tilde{a} - a_{\min}}{a_{\max} - a_{\min}}.
\]
This ensures that $a$ is non-negative and takes values in $[0,1]$, 
pointwise. The boundary condition $b$ is uniformly sampled $\sim [0.25, 5]$. The numerical solution is obtained via the finite element method in FEniCSx using CG elements of degree 1 on a triangular mesh of the unit disk \cite{baratta2023dolfinx, alnaes2014unified, balay2019petsc}. Since the operator $-\Delta - \omega^2 a$ is not invertible for all choices of $a$, the solution operator is unbounded unless we restrict our choice of input domain $(a,b)$ further. To do so, after obtaining numerical solution $u$, we reject the sample if $\frac{\max_{x \in \Omega}{|u(x)|}}{b} > 10$. This can be thought of as cutting out holes around the singularities in the domain $(a,b)$; it can also be thought of as conditioning the input measure as previously described.
Doing so is necessary for the true solution map to exist on our choice of input set. To adapt the data for the FNO, we lift $b$ to a function by $ b \mapsto b \mathbf{1}$ and concatenate $a$ and $b\mathbf{1}$ as the input to the FNO. For the DNO, we also lift $b \mapsto b\mathbf{1}$ and transform the output data to be $u - b\mathbf{1}$ so that it has zero boundary condition. Then the map $(a, b\mathbf{1})\mapsto u - b\mathbf{1}$ is learned, and in postprocessing we add $b\mathbf{1}$ back to obtain the true solution. For this example, we produced $11000$ data samples and set aside $500$ for validation data and $500$ for test data. 

Since the Helmholtz data are generated on a triangular mesh, we are able to pass this mesh directly to the DNO by using Dirichlet layers as all hidden layers. On the other hand, to adapt the data for the FNO, we interpolate the data to a uniform grid with approximately the same number of interior data points. We then pass to the FNO a box domain containing the circular domain with a zero mask applied outside the circular domain. For FNO, the error comparison of prediction to truth is performed on the interpolated grid instead of once more interpolating the FNO output back to the FEM mesh. In doing so, we give FNO a slight advantage because the error is not further increased via additional interpolation. 

\begin{figure}
    \centering
    \includegraphics[width=0.78\linewidth]{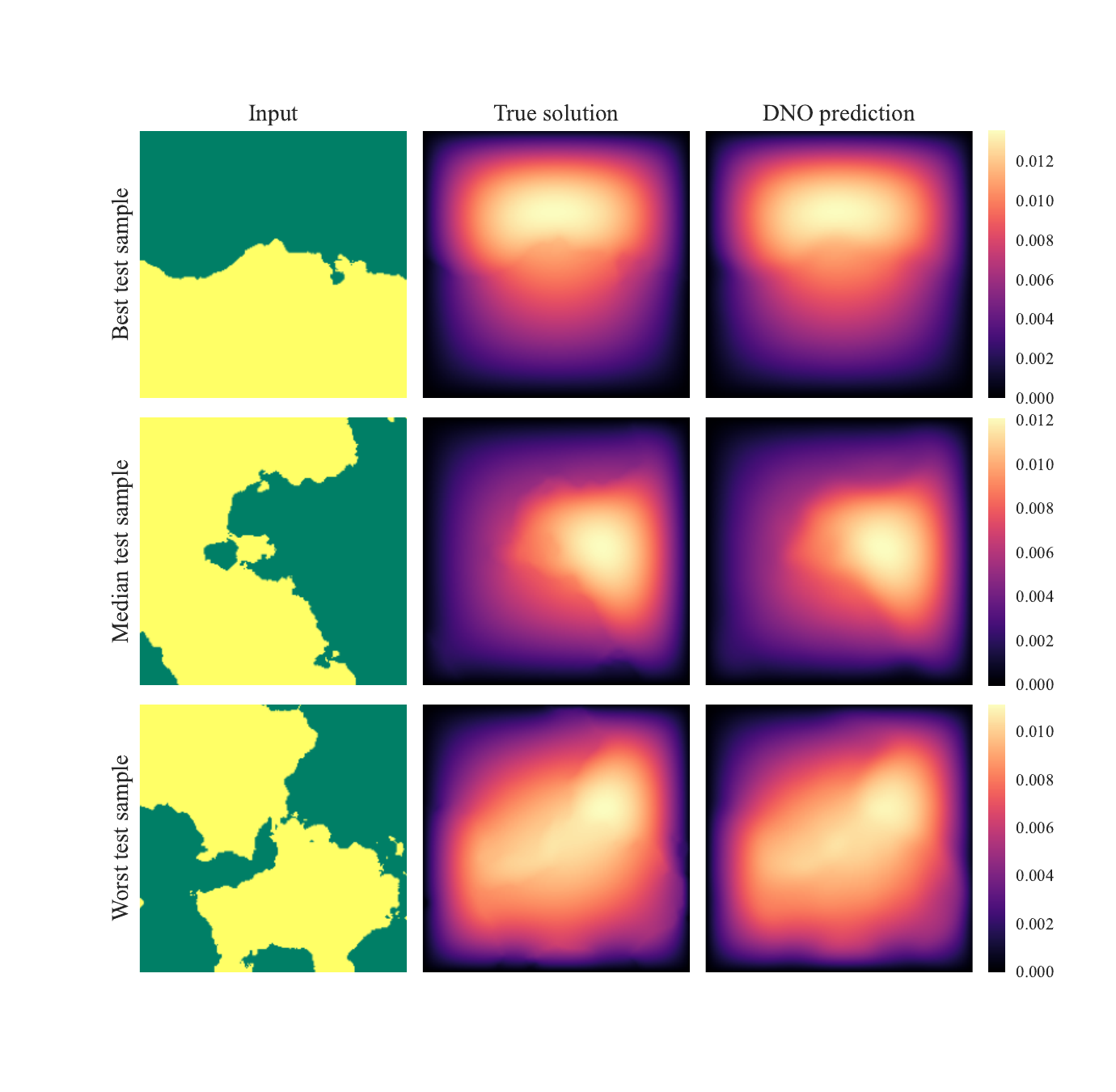}
    \vspace{-1cm}
    \caption{DNO predictions on Darcy flow. Relative $L^2$ error for the best, median, and worst test samples, respectively: $0.482\%$, $0.772\%$, and $2.02\%$.}
    \label{fig:darcy-dno-predictions}
\end{figure}

\begin{figure}
    \centering
    \includegraphics[width=0.68\linewidth]{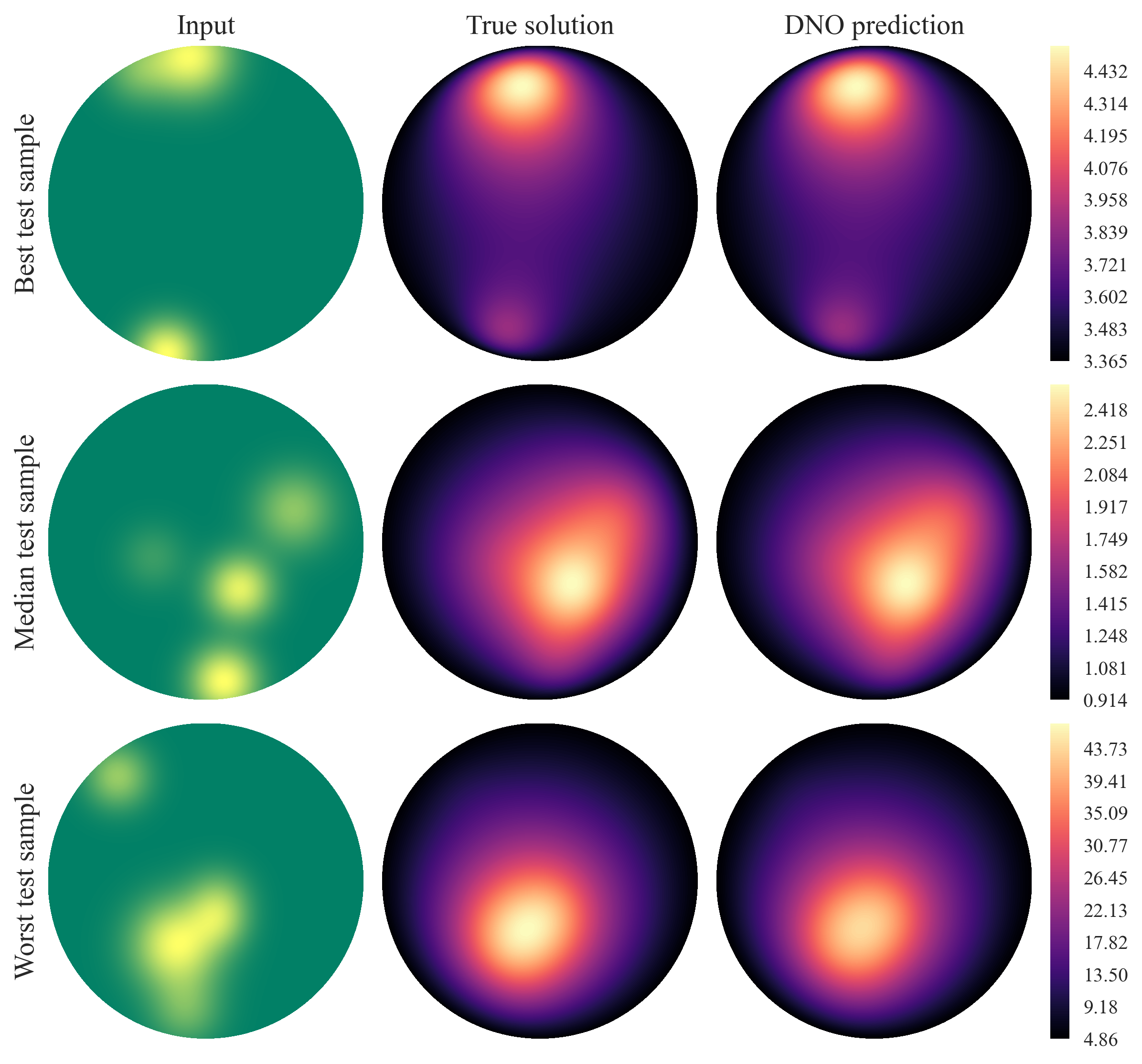}
    \caption{DNO predictions for the Helmholtz example. Relative $L^2$ error for the best, median, and worst test samples, respectively: $0.0283\%$, $0.130\%$, $4.45\%$.}
    \label{fig:helmholtz-dno-predictions}
\end{figure}


\FloatBarrier

\begin{figure}
    \centering
    \includegraphics[width=0.8\linewidth]{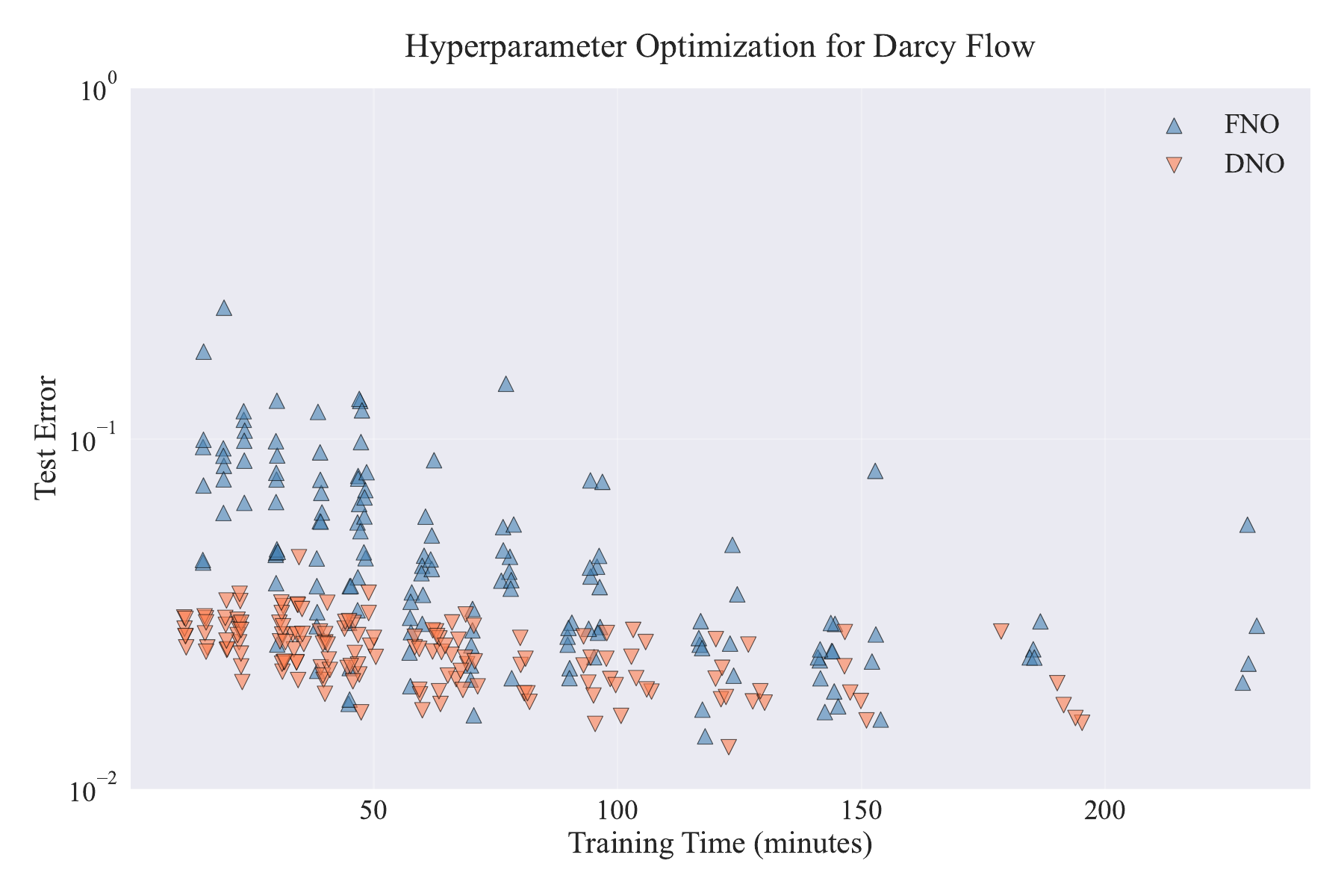}
    \caption{Darcy example hyperparameter search results.}
    \label{fig:darcy_hyperparam}
\end{figure}

\begin{figure}
    \centering
    \includegraphics[width=0.8\linewidth]{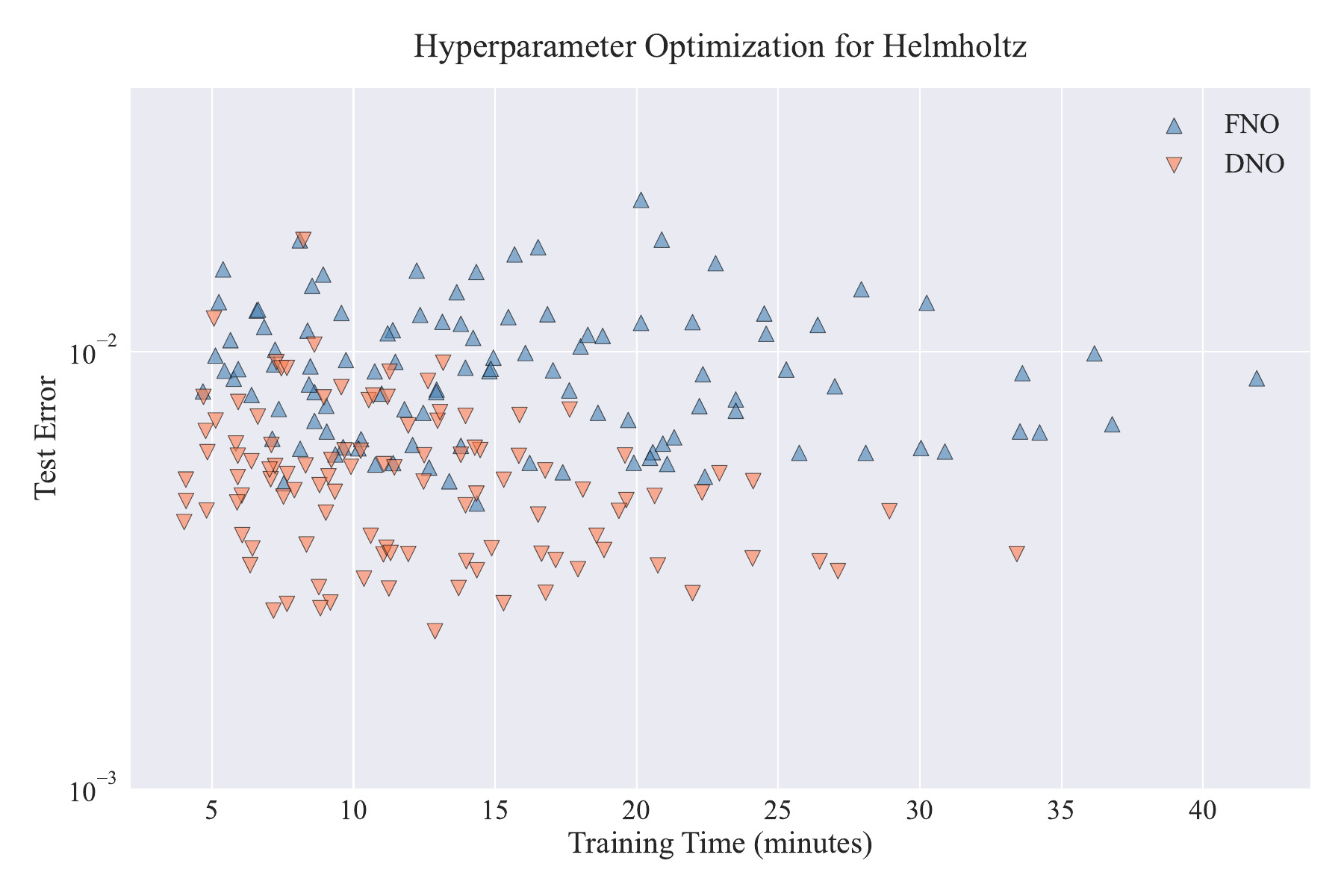}
    \caption{Helmholtz example hyperparameter search results.}
    \label{fig:helmholtz_hyperparam}
\end{figure}

\begin{figure}
    \centering
    \includegraphics[width=0.8\linewidth]{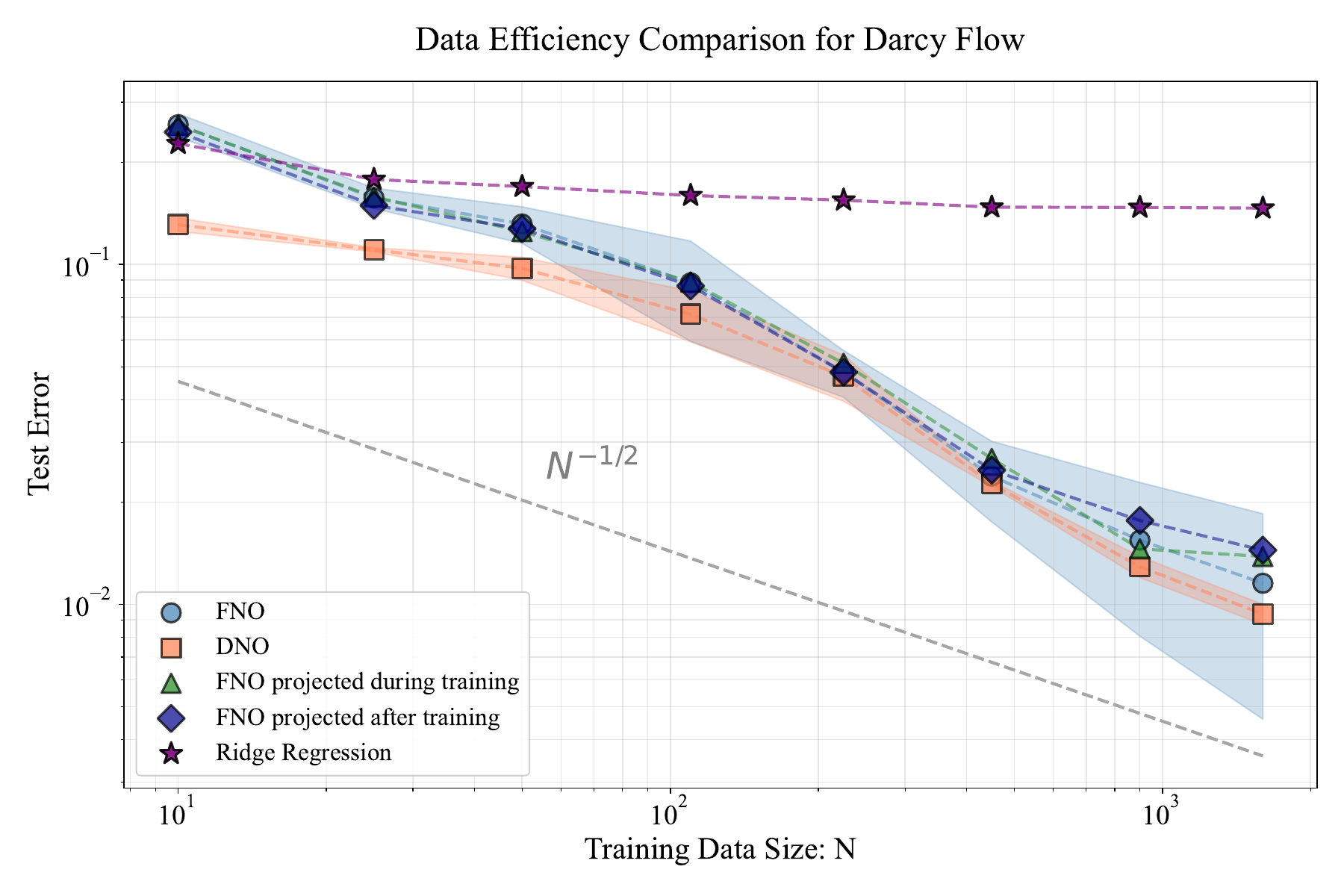}
    \caption{Data efficiency for the Darcy flow example for FNO and DNO as well as projected FNO before and after training.}
    \label{fig:data_efficiency_darcy_extended}
\end{figure}

\begin{figure}
    \centering
    \includegraphics[width=0.8\linewidth]{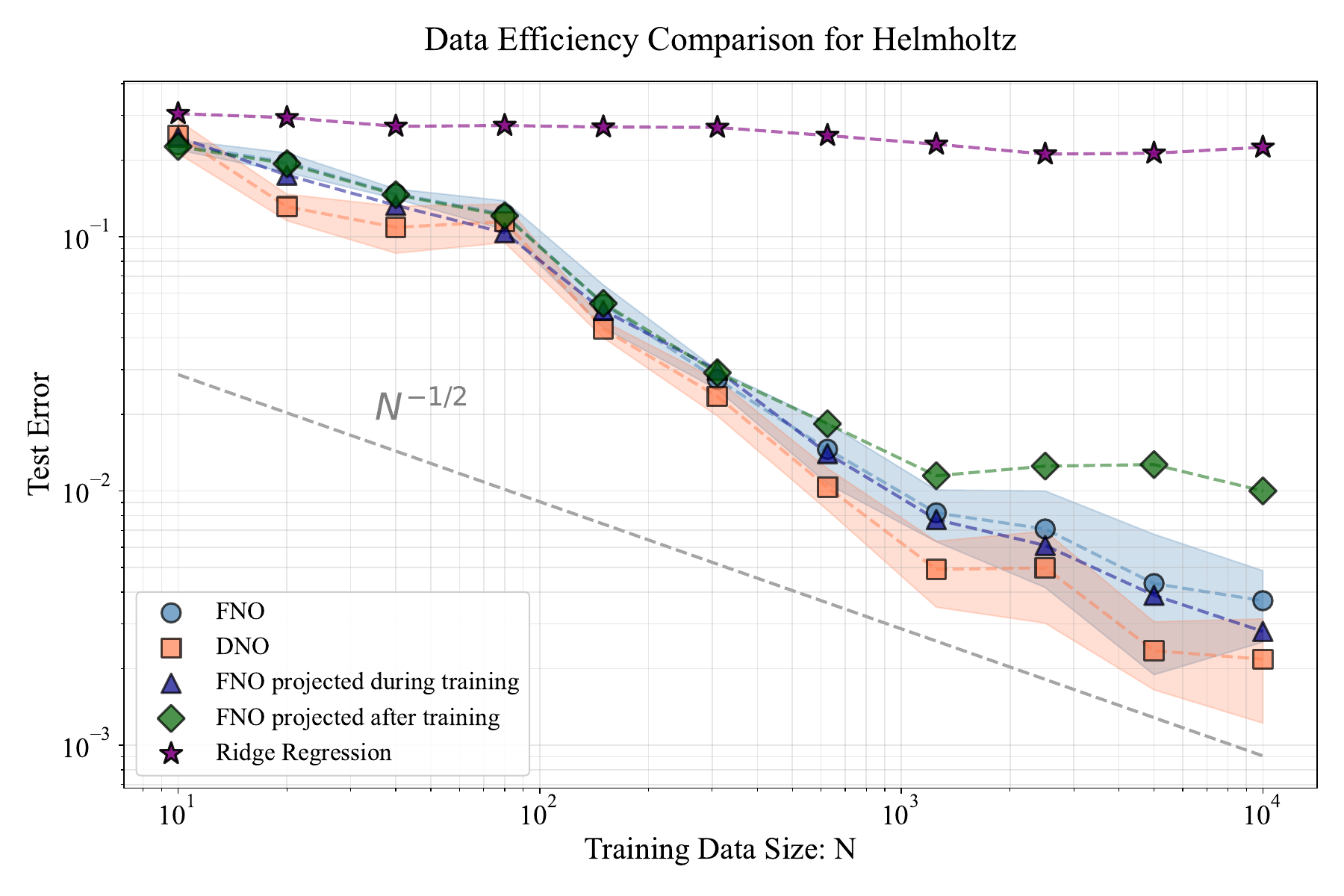}
    \caption{Data efficiency for the Helmholtz example for FNO and DNO as well as projected FNO before and after training and regression.}
    \label{fig:data_efficiency_helmholtz_extended}
\end{figure}

\subsection{Results and Discussion}\label{ssec:results-discussion}

For both PDE examples we ran a hyperparameter gridsearch, doing so
for the DNO and for the FNO. And in both PDE examples, we varied the number of eigenfunctions (resp. Fourier modes) in each dimension, the channel width, the batch size, the number of layers, and the number of training epochs. For the Darcy problem, the resulting choice of hyperparameters for the DNO was $12$ modes in each degree of freedom, a channel width of $64$, $300$ epochs, a batch size of $15$, $4$ FNO layers, and one final DNO layer. For the FNO, the resulting choice was the same with $12$ modes in each dimension, a channel width of $64$, $300$ epochs, a batch size of $14$, and $4$ hidden layers. For the Helmholtz problem, the resulting choices for DNO were $16$ modes, a width of $64$, $300$ epochs, a batch size of $25$, and $4$ DNO layers. For FNO, the choices were $12$ modes, a width of $64$, a batch size of $25$, $300$ epochs, and $4$ layers. We note that the training time of these models were comparable between the FNO and DNO in both examples. A visualization of the input and output functions as well as predictions for both problems can be seen in Figures \ref{fig:darcy-dno-predictions} and \ref{fig:helmholtz-dno-predictions}, for the Darcy and Helmholtz examples, respectively. Figures \ref{fig:darcy_hyperparam} and \ref{fig:helmholtz_hyperparam} give a visualization of the relative validation error versus the training time for the hyperparameter search for the Darcy and Helmholtz examples, respectively. These figures also give empirical evidence that the DNO is more robust to the suboptimal hyperparameter choices that will be made in practice.

We then took models with these hyperparameters and performed data efficiency comparisons of DNO, FNO, and the FNO projected onto the Laplacian eigenfunction basis both before and after training, as well as a basic ridge regression, as detailed in Subsection \ref{ssec:arch}. The results of relative test error versus training data size $N$ are shown in Figures \ref{fig:data_efficiency_darcy_extended} and \ref{fig:data_efficiency_helmholtz_extended}. The shaded uncertainty regions in both figures represent two standard deviations away from the mean of five models trained on the same amount of data; the ridge regression example has no uncertainty as it results from a deterministic linear solve instead of from stochastic gradient descent. The poor performance of ridge regression demonstrates that the map $\Psi$ being learned
is indeed significantly nonlinear. For the Darcy example, the clearest benefit appears in the low data regime, where the DNO shows a 2$\times$ accuracy improvement over any of the other model choices; the other nonlinear model choices fail to beat even ridge regression. The benefit of DNO remains present for higher amounts of data as well; for $1600$ data points the DNO shows a 20\% accuracy improvement over FNO, which in turn beats both projection approaches. Indeed, it is somewhat surprising that neither projection approach improved on the basic FNO implementation; this implies that the benefit of DNO goes beyond simply getting the boundary condition right. For the Helmholtz example, all of the models perform similarly at extremely low levels of data ($N=10$), but the DNO shows roughly a 30\% accuracy improvement over the other models for $N=20$ and $N=40$. Surprisingly, in this example the benefit of DNO shows up most in the higher data regime, showing approximate 2$\times$ improvement in accuracy for $N=5000$ and $N=10000$. In this Helmholtz example, projecting the FNO during training did yield some benefit in the high data regime, but, interestingly, projecting after training caused the error to more than double; we believe this error originates from the interpolation error between the native mesh of the FEM solver and the uniform mesh of the FNO. We emphasize that this phenomenon is not mitigated by increasing $J$ in the projection operator $P_J$. 

Intuitive visualizations of the best, median, and worst test samples are shown for DNO for the Darcy and Helmholtz experiments in Figures \ref{fig:darcy-dno-predictions} and \ref{fig:helmholtz-dno-predictions}, respectively, at training data counts of $N=1600$ and $N=10000$. We comment that for the Helmholtz example, the worst test sample occurs near the cutoff for a singularity as described in Subsection \ref{ssec:helmholtz} as expected; this was also the worst test sample for FNO.

\FloatBarrier
\section{Conclusion} \label{sec:conclusion}

We propose a choice of basis for the nonlocal neural operator that automatically satisfies Dirichlet boundary conditions by using the eigenfunctions of the Laplacian. The approximation theory developed for this approach additionally goes far beyond this specific setting and applies to a broad class of bases for $L^2$ with only Lipschitz boundary assumptions on a domain in any Euclidean space. We implement our proposed ``Dirichlet Neural Operator" on two representative examples, including the Darcy flow problem on a square and the Helmholtz equation on a circular domain. The latter example in particular demonstrates the advantage of the method on arbitrary mesh points as well as with nonzero Dirichlet boundary. The ability of the method to enforce boundary conditions without training on arbitrary meshes and domains distinguishes it from prior work on the subject. 

Several possible extensions are suggested by the work. First, since the theory applies to a broad class of possible bases, additional basis choices such as finite element bases could be implemented to achieve other desirable properties such as different boundary conditions or possibly boundedness guarantees. The choice of Fourier basis for the FNO has allowed for numerical analysis yielding rates in terms of model size parameters; similar rates could be achieved in different settings depending on the choice of basis. Furthermore, it remains an open question how to automatically enforce mixed boundary conditions on arbitrary domains that are defined separately on a partition $\Gamma_1 \cup \dots \cup \Gamma_J = \partial \Omega$ with operator learning.

\section*{Acknowledgments}
The authors thank
Manuel Santana for advice on the numerical solution of the Helmholtz equation and Samuel Lanthaler for technical insights and advice.

AI acknowledgment: Claude Sonnet 4.6 was used to identify sources for theory on the eigenfunctions of the Laplacian. Claude Opus 4.8 made a key suggestion to achieve the proof of Lemma \ref{lemma:alpha-approx-single}; namely, the exact ``$1$'' channel. Cursor was used as a code development aid, including in the numerical solution of the Helmholtz equation, implementation of the Dirichlet layer, and figure generation scripts. Claude Code with Opus 4.8 was used as a code development aid for the linear regression comparison. The authors 
take full responsibility for all the theoretical and numerical results 
in this manuscript. Furthermore, AI was not used to write any portion of the 
manuscript nor to determine which references to cite.

\bibliographystyle{amsplain}
\bibliography{references}

\appendix 
\section{Properties of finite-dimensional neural networks}\label{sec:finite-NN}

\begin{definition}\label{def:finite-NN}
    A \textit{finite-dimensional feedforward neural network} $G: \R^{d_x} \to \R^{d_y}$ takes the form of compositions 
    \[G(x) = A_{L} L_{L-1} \circ \dots \circ L_1(x) + b_{L}, \quad x \in \R^{d_x}\]
    where $L_l: \R^{d_l} \to \R^{d_{l+1}}$, $d_1 = d_x$, $d_{L+1} = d_y$, and 
    \[L_l(x_l) = \sigma(A_lx_l + b_l), \quad x_l \in \R^{d_l},\]
    and $A_l \in \R^{d_{l+1} \times d_l}$ is a matrix and $b_l \in \R^{d_{l+1}}$ is a vector. The activation $\sigma: \R \to \R$ is a $C^s$, non-polynomial function for some integer $s \geq 0$,
    that is extended to act component-wise on vector inputs. \hfill$\lozenge$
\end{definition}

The following result is a well-known fact about the approximation capability of finite-dimensional feedforward neural networks. We restate it here for convenience and refer to \cite[Theorem 4.1]{pinkus1999approximation} for the proof.
\begin{lemma}\label{lem:NN-approx}
    Let $\Omega \subset \R^d$ be a bounded domain, and assume $\sigma \in C^s(\R)$ is not a polynomial.  
Let $u\in C^s(\Obar; \R^k)$ for some integer $s \geq 0$. Then,
for any non-polynomial activation $\sigma \in C^s$ as in Definition \ref{def:finite-NN}, and for any $\epsilon >0$, there exists a finite-dimensional neural network $\tilde{u}$ such that
    \begin{equation*}
        \|u - \tilde{u}\|_{C^s(\Obar; \R^k)}  = \sup_{|\alpha| \leq s} \sup_{x \in \Obar} |D^\alpha u(x) - D^\alpha \tilde{u}(x) | \leq \epsilon.
    \end{equation*}
\end{lemma}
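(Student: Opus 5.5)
The plan is to prove the statement for a one-hidden-layer network $\tilde u(x)=\sum_{i=1}^n c_i\,\sigma(w_i\cdot x+b_i)$ with $c_i\in\R^k$, which is a special case of Definition~\ref{def:finite-NN}. Working componentwise, I reduce to $k=1$ and stack the $k$ scalar networks in parallel. Since $\Obar$ is compact, I place it inside a closed cube $\cK$. I then extend $u$ to a $C^s$ function on $\cK$. For Lipschitz $\Omega$, which is the setting of the paper, I use a Stein-type extension; in general I use the Whitney extension theorem, interpreting $C^s(\Obar)$ as jets continuous up to the boundary. Next, by Weierstrass approximation of the top derivatives, or by mollifying the extension and applying Stone--Weierstrass with a derivative bookkeeping argument, I obtain a polynomial $p$ with $\|u-p\|_{C^s(\cK)}<\epsilon/2$. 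After this step it suffices to approximate polynomials in $C^s(\cK)$ by networks.

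Second, every homogeneous polynomial of degree $r$ in $d$ variables is a finite linear combination of ridge monomials $(w\cdot x)^r$. This follows because the $(w\cdot x)^r$, as $w$ ranges over $\R^d$, span the space of homogeneous degree-$r$ polynomials: that space is irreducible under $GL_d$, or alternatively a polarization argument gives the same conclusion. This reduces the problem to approximating the univariate monomials $t\mapsto t^r$ for $r\le \deg p$, uniformly in $C^s$ on a compact interval $[-R,R]$. Composing with $t=w\cdot x$ then controls all $x$-derivatives up to order $s$ on $\cK$, at the cost of a constant depending on $|w|$.

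Third, the one-dimensional step. First assume $\sigma\in C^\infty$. Since $\sigma$ is not a polynomial, for each $r$ there is some $b_r$ with $\sigma^{(r)}(b_r)\neq0$. Then
\[
t^r=\frac{1}{\sigma^{(r)}(b_r)}\,\frac{\p^r}{\p w^r}\sigma(wt+b_r)\Big|_{w=0}.
\]
The $r$-th order finite difference in $w$ with step $h$ is a linear combination of neurons $\sigma(jht+b_r)$. By Taylor's theorem with uniform bounds on higher derivatives of $\sigma$ over compacts, this difference converges to the $w$-derivative in $C^s([-R,R])$ as $h\to0$; the $t$-derivatives of the difference quotient are themselves difference quotients of $t$-derivatives of the same function. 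For $\sigma$ only $C^s$, I replace $\sigma$ by $\sigma*\varphi$ for a mollifier $\varphi$. This is $C^\infty$, and for some choice of $\varphi$ it is non-polynomial; otherwise $\sigma*\varphi_\delta$ would be polynomials of uniformly bounded degree, and letting $\delta\to0$ would make $\sigma$ a polynomial. Riemann sums of $\int\sigma(\cdot-y)\varphi(y)\,dy$ are linear combinations of shifted copies of $\sigma$, and they converge to $\sigma*\varphi$ in $C^s$ on compacts by uniform continuity of $\sigma^{(j)}$ for $j\le s$. Thus every neuron built from $\sigma*\varphi$ is approximable in $C^s$ by $\sigma$-networks, and the smooth argument transfers.

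I expect the main obstacle to be this last step: the $C^s$-only activation together with the uniformity of the derivative convergence, since the Riemann-sum and finite-difference limits must be controlled simultaneously in all derivatives up to order $s$. The extension step is the second delicate point if $\Omega$ is not assumed regular, and I will cite Whitney there rather than reprove it.
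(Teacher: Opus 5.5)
The paper gives no proof of its own here; it cites \cite[Theorem 4.1]{pinkus1999approximation}. Your outline follows the Leshno--Lin--Pinkus--Schocken argument behind that theorem: ridge-function reduction, finite differences in the weight to produce monomials, and mollification of a merely $C^s$ activation. You also add an extension step, which the citation needs implicitly because Pinkus works on $\R^d$. One step, however, does not follow as written. From ``$\sigma*\varphi_\delta$ is a polynomial for every $\delta$'' you cannot conclude that the degrees are uniformly bounded. Polynomials of unbounded degree can converge locally uniformly to a non-polynomial, so this does not show that some $\sigma*\varphi$ is non-polynomial.

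In the cited proof this is exactly where a Baire category argument on $C_c^\infty([-1,1])$ enters. The closed subspaces $\{\varphi : \sigma*\varphi \text{ is a polynomial of degree} \le k\}$ exhaust the space, so one of them is the whole space. You can also avoid Baire. To obtain $t^r$ by your finite-difference trick you only need a smooth $\tau$ in the closure with $\tau^{(r)}(b_r)\neq 0$ for some $b_r$, i.e.\ $\tau$ is not a polynomial of degree $<r$. Suppose $(\sigma*\varphi_\delta)^{(r)}\equiv 0$ for every $\delta>0$. Then $\sigma$ is a locally uniform limit of elements of the finite-dimensional, hence closed, space of polynomials of degree $<r$, so it would itself be such a polynomial. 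Choose $\delta=\delta(r)$ separately for each $r\le \deg p$; the rest of your argument is unaffected.

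Two smaller points. First, Whitney's extension theorem needs the jet to satisfy the Taylor-remainder compatibility conditions on $\Obar$. Continuity of derivatives up to the boundary does not give these on non-quasiconvex domains, e.g.\ at an inward cusp, so ``in general, use Whitney'' is not justified as written. This does not affect the paper, which invokes the lemma only with $s=0$ (Tietze suffices), on boxes, or on Lipschitz $\Omega$, where your extension (or \cite[Lemma A.1]{lanthaler2025nonlocality}) is available. Second, for $d>1$ you cannot approximate the top-order derivatives independently and integrate, because the approximants need not be compatible. Density of polynomials in $C^s$ of a cube should come instead from convolving a compactly supported $C^s$ extension with a Landau-type polynomial kernel, which commutes with differentiation, or from Bernstein polynomials.
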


\section{Proof of Proposition \ref{prop:cases}}\label{appd:prop-proof}
Here we provide a proof of Proposition \ref{prop:cases}, which details some familiar examples where the Smoothed Approximation Assumption holds. Throughout this proof, we refer to the three conditions of the Smoothed Approximation Assumption \ref{ass:smoothed_appr} by their enumerations \ref{it:smooth-conv}, \ref{it:proj-conv}, and \ref{it:cont-func}.
\begin{proof}
\textit{Case 1: } We note that this case is largely taken from the proof of \cite[Proposition A.6]{lanthaler2025nonlocality}. By \cite[Lemma A.1]{lanthaler2025nonlocality}, there exists a continuous extension mapping $\cE: C^{s'}(\Obar) \to C^{s'}_\text{per}(B)$. Then $\sfK'_{\text{per}} := \cE(\sfK')$ is a compact subset of $C^{s'}_{\text{per}}(B)$, and we can identify $B \simeq \Td$ in a canonical way. Fix $\delta >0$, and let $w'_\delta: B \to \R$ be the standard mollification (see Definition \ref{def:standard-moll}) of the periodic function $w': B \to \R$. We may then define the family of linear maps $\{f_\delta\}_{\delta >0}$ by the composition of the mollification and periodic extension operator; i.e., $f_\delta(w) := \cE(w)_\delta$. The standard mollifier satisfies \ref{it:smooth-conv}:
\begin{align*}
    \lim_{\delta \to 0 } \sup_{w \in \sfK'} \|f_\delta(w) - w \|_{C^{s'}(\Obar)}& = \lim_{\delta \to 0 } \sup_{w \in \sfK'} \|\cE(w)_\delta - w \|_{C^{s'}(\Obar)}  \\
    & = \lim_{\delta \to 0} \sup_{w' \in \sfK'_{\text{per}}} \|w'_\delta - w'\|_{C^{s'}(\Obar)}= 0. 
\end{align*}
 Since the extension operator and the mollifier are continuous, property \ref{it:cont-func}
is also satisfied. It follows from classical results in harmonic analysis  \cite[Chapter 1, Section 3]{jackson1930theory} 
that approximation by Fourier series on $C^{s'}(B)$ converges uniformly over the mollified set 
$\{w'_\delta | \; w' \in \sfK'_{\text{per}}\}$: 
\begin{equation*}
    \lim_{J\to \infty} \sup_{w'\in \sfK'_{\text{per}}} \Big \| w'_\delta- \sum_{j=1}^J \la w'_\delta, \eta_j \ra_{L^2(B)} \eta_j \Big\|_{C^{s'}_{\text{per}}(B)} = 0.
\end{equation*}
It can then be seen that 
\begin{align*}
    \lim_{J \to \infty}\sup_{w \in \sfK'}&\Big\|f_\delta(w) - \sum_{j=1}^J \la f_\delta(w), \eta_j \ra_{L^2(B)} \eta_j \Big\|_{C^{s'}(\Obar)}\\ & =  \lim_{J \to \infty}\sup_{w' \in \sfK'_{\text{per}}}\Big\|w'_\delta - \sum_{j=1}^J \la w'_\delta, \eta_j \ra_{L^2(B)} \eta_j \Big\|_{C^{s'}(\Obar)}\\
    & \leq  \lim_{J \to \infty}\sup_{w' \in \sfK'_{\text{per}}}\Big\|w'_\delta - \sum_{j=1}^J \la w'_\delta, \eta_j \ra_{L^2(B)} \eta_j \Big\|_{C^{s'}_{\text{per}}(B)} = 0,
\end{align*}
satisfying property \ref{it:proj-conv}.

\textit{Case 2:} 
In what follows, for any $\gamma>0$,
\begin{equation} \label{eq:og}
\Omega^{\gamma}:=\{x \in \Omega: \text{dist}(x, \p\Omega) \geq \gamma\},
\end{equation}
and $\mathbf{1}_{\Omega^{\gamma}}$ is the indicator function on $\Omega^{\gamma}$.
For any $\delta >0$, define the smooth cutoff mollification of $w \in \sfK'$ by 
\begin{equation*}
    f_\delta(w) = \rho_\delta \ast (\chi_{4\delta} w),
\end{equation*}
where $\rho_\delta$ is the standard mollifier as in Definition \ref{def:standard-moll}, and $\chi_\delta = \rho_{\delta/4} \ast \mathbf{1}_{\Omega^{3\delta/4}}$.
Then $\chi_\delta$ satisfies
\begin{equation*}
    \chi_\delta(x) = \begin{cases}
        1 \quad \text{ if } \text{dist}(x, \partial\Omega) \geq \delta, \; x \in \Omega\\
        0 \quad \text{ if } \text{dist}(x, \partial\Omega) \leq \frac{\delta}{2}, x \in \Omega, \\
        0 \quad \text{ if } x \notin \Omega,
    \end{cases}
\end{equation*}
with $0 \leq \chi_\delta \leq 1$ and $\|\nabla \chi_\delta \|_{\infty} \lesssim \delta^{-1}$.
Then $\chi_{4\delta} w$ is continuous and equal to $0$ within $2\delta$ of the boundary $\p\Omega$. We show first that property \ref{it:smooth-conv} holds. 

Recall that since $\sfK'$ is compact, it is equicontinuous; i.e.~there exists a modulus of continuity $\gamma$ such that for all $w\in \sfK'$, it holds that $|w(x) - w(y)| \leq \gamma(|x-y|)$ for $x,y \in \Obar$, and that $\gamma(t) \to 0$ as $t \to 0^+$. We bound
\begin{align*}
    \lim_{\delta \to 0} \sup_{w \in \sfK'} \|f_\delta(w) - w\|_{C(\Obar)} \leq \lim_{\delta \to 0}\sup_{w \in \sfK'} \Big(\underbrace{\|f_\delta(w) - \chi_{4\delta}w\|_{C(\Obar)}}_{\text{(I)}} + \underbrace{\|\chi_{4\delta}w - w\|_{C(\Obar)}}_{\text{(II)}}\Big).
\end{align*}
We bound components (I) and (II) separately. For any $v \in \{\chi_{4\delta}w: \; w \in \sfK'\}$ and $x,y \in \Obar$, we have
\begin{align}
    |v(x) - v(y)| & \leq \sup_{w \in \sfK'} |(\chi_{4\delta}w)(x) - (\chi_{4\delta}w)(y)|\notag\\
    & \leq \sup_{w \in \sfK'} |\chi_{4\delta}(x)| |w(x) - w(y)| + |w(y)| |\chi_{4\delta}(x) - \chi_{4\delta}(y)| \notag\\
    & \leq \gamma(|x-y|) + |w(y)|. \label{eqn:Kpdelta-MOC}
\end{align}
We are now prepared to bound component (I): 
\begin{align*}
    \lim_{\delta \to 0}&\sup_{w \in \sfK'}\|\rho_\delta \ast (\chi_{4\delta}w) - \chi_{4\delta}w\|_{C(\Obar)} \\&= \lim_{\delta \to 0}\sup_{w\in \sfK'}\sup_{x \in \Obar} \Big| \int_{B_\delta(0)} \rho_\delta(y) ((\chi_{4\delta} w)(x-y) - (\chi_{4\delta}w)(x)) \dy\Big|\\
    & \leq \lim_{\delta \to 0} \sup_{w \in \sfK'}\sup_{x \in \Obar} \sup_{|y| \leq \delta }|(\chi_{4\delta}w)(x-y) - (\chi_{4\delta}w)(x)|\\
    & = \lim_{\delta \to 0} \sup_{w\in\sfK'}\max\Big\{\sup_{\substack{x \in \Omega^{5\delta}\\|y| \leq \delta}}  |w(x-y) - w(x) |, \sup_{\substack{x \in \Obar \setminus \Omega^{5\delta}\\ |y| \leq \delta}}  |(\chi_{4\delta}w)(x-y) - (\chi_{4\delta}w)(x)|\Big\}\\
    & \leq \lim_{\delta \to 0} \max\Big\{ \gamma(\delta), \gamma(\delta) + \sup_{w \in \sfK'} \sup_{x \in \Obar\setminus\Omega^{5\delta}} |w(x)|\Big\}
\end{align*}
by \eqref{eqn:Kpdelta-MOC}. Then, 
\begin{align*}
     \lim_{\delta \to 0}&\sup_{w \in \sfK'}\|\rho_\delta \ast (\chi_{4\delta}w) - \chi_{4\delta}w\|_{C(\Obar)} \leq \lim_{\delta \to 0} \gamma(\delta) + \gamma(5\delta)  = 0,
\end{align*}
since $w = 0$ on $\p\Omega$, and any $x \in \Obar \setminus\Omega^{5\delta}$ is at a distance at most $5\delta$ from the boundary. Bounding component (II) is simpler, as we see that
\begin{align*}
    \lim_{\delta \to 0}\sup_{w \in \sfK'} \|\chi_{4\delta}w -w \|_{C(\Obar)} & \leq \lim_{\delta \to 0} \sup_{w \in \sfK'}\sup_{x \in \Obar \setminus \Omega^{4\delta}}|\chi_{4\delta}(x) w(x) - w(x) | \\
    & \leq \lim_{\delta \to 0}\sup_{w \in \sfK'} \sup_{x \in \Obar \setminus \Omega^{4\delta}} |w(x)|\\
    & \leq \lim_{\delta \to 0} \gamma(|4\delta|) = 0. 
\end{align*}
Thus property \ref{it:smooth-conv} holds.
It is clear that property \ref{it:cont-func} also holds since $\chi_\delta$ is continuous and the eigenfunctions of the Laplacian are $C^\infty(\Omega)$. For property \ref{it:proj-conv}, set $B = \Obar$ and note that for any fixed $\delta >0$, $f_\delta$ is a bounded, continuous map from $C(\Obar)$ to $C_c^k(\Obar)$ for any positive integer $k$, where $C^k_c(\Obar)$ denotes the subspace of functions in $C^k(\Obar)$ with compact support in $\Omega$ and thus are $0$ on the boundary. Then the image of $\sfK'$ under $f_\delta$, denoted $\sfK'_\delta$, is compact in $C^k_c(\Obar)$. Applying Lemma \ref{lem:eigfunc-proj}, we obtain convergence of the projection of $\sfK'_\delta$ onto the eigenfunctions of the Laplacian in $C(\Obar)$, which achieves  property \ref{it:proj-conv}.

\textit{Case 3:} 
Let $\{P_n\}_{n=0}^\infty$ denote the Legendre polynomials, $P_n$ of degree $n$, 
orthonormalized so that $\bigl(n+\frac12\bigr)\int_{-1}^1 P_m(x) P_n(x) \dx = \delta_{mn}$,
the standard Kronecker delta \cite[Equation 1.3.19]{shen2011spectral}.
We tensorize the Legendre polynomials in $d$ dimensions in the following manner: for $\nu \in \N_0^d$, let $\Phi_\nu(x) = \prod_{i=1}^d P_{\nu_i}(x_i)$. The set $\{\Phi_\nu\}_{\nu \in \N_0^d}$ is then an orthonormal basis of $L^2(\Omega)$. Let $B'$ be a bounded hypercube containing $\Omega$ in its interior. There exists a continuous, linear operator $\cE: C^{s'}(\Omega) \to C^{s'}(B')$ such that $\cE(u)|_{\Omega} = u$ \cite[Lemma A.1]{lanthaler2025nonlocality}. Let $\rho_\delta$ be the standard mollifier in Definition \ref{def:standard-moll}, and define $f_\delta(w) = (\rho_\delta\ast \cE(w))|_{\Omega}$ for $w \in C^{s'}(\Omega)$. We assume $\delta < \text{dist}(\Omega, \p B')$ so that the mollifier is defined on $\Omega$. To prove property \ref{it:smooth-conv}, observe that for $|\alpha| \leq s'$, $D^\alpha f_\delta(w) - D^\alpha w= \rho_\delta \ast (D^\alpha \cE(w)) - D^\alpha \cE(w))$ on $\Omega$. Since $\sfK'$ is compact in $C^{s'}(\Omega)$ and $\cE$ is continuous, the set $\{D^\alpha \cE(w): \; w \in \sfK'\}$ is compact for any choice of $|\alpha| \leq s'$ and thus is uniformly equicontinuous; denote by $\omega_\alpha$ the modulus of continuity of this set. Since the set of $\alpha$ such that $|\alpha| \leq s'$ is finite, it follows that 
\begin{align*}
\lim_{\delta \to 0}&\max_{\alpha: |\alpha| \leq s'} \sup_{w \in \sfK'} \|D^\alpha f_\delta(w) - D^\alpha w \|_{L^\infty(\Omega)} \\& = \lim_{\delta \to 0}\max_{\alpha: |\alpha| \leq s'} \sup_{w \in \sfK'} \|\rho_\delta \ast (D^\alpha \cE(w)) - D^\alpha \cE(w)\|_{L^\infty(\Omega)} \\
& \leq \lim_{\delta \to 0} \max_{\alpha: |\alpha| \leq s'}  \omega_\alpha(\delta) = 0,
\end{align*}
so property \ref{it:smooth-conv} holds. Property \ref{it:cont-func} holds as well since $\rho_\delta \ast \cE$ is a bounded linear function from $C^{s'}(\Omega)$ to $L^2(\Omega)$, and each $\Phi_\nu$ satisfies $\|\Phi_\nu\|_{L^2} = 1$ so $w \mapsto \la f_\delta(w), \Phi_\nu \ra$ is a bounded linear functional and thus a continuous functional. 

We proceed with proving property \ref{it:proj-conv}:
\begin{align*}
    \sup_{w \in \sfK'}& \big\|f_\delta(w) - \sum_{\|\nu\|_{\infty} \leq J} \la f_\delta(w), \Phi_\nu \ra \Phi_\nu \big\|_{C^{s'}(\Omega)}  \\
    &= \sup_{w\in \sfK'} \big\|\sum_{\|\nu\|_{\infty} > J} \la f_\delta(w), \Phi_\nu \ra \Phi_\nu \big\|_{C^{s'}(\Omega)}\\
    & \leq \sup_{w\in \sfK'} \sum_{\|\nu\|_\infty > J} | \la f_\delta(w), \Phi_\nu\ra | \big \| \prod_{i=1}^d P_{\nu_i} \big\|_{C^{s'}(\Omega)}\\
    & \leq \sup_{w\in \sfK'} \sum_{\|\nu\|_\infty > J}  | \la f_\delta(w), \Phi_\nu \ra |\prod_{i =1, \nu_i \neq 0}^d \|P_{\nu_i}\|_{C^{s'}([-1,1])},
\end{align*}
where in the last line we have used that $D^\alpha \Phi_\nu = \prod_{i=1}^d P_{\nu_i}^{(\alpha_i)}(x_i)$
together with the fact that $\|P_0\|_{C^{s'}([-1,1])} = \frac{1}{\sqrt{2}} < 1$, which allows the
factors with $\nu_i = 0$ to be discarded. Due to our chosen rescaling, $\|P_n\|_{L^\infty([-1,1])} \lesssim n^{1/2}$,
it follows from the Markov brothers' inequality that $\|P_{\nu_i}\|_{C^{s'}([-1,1])} \leq C (\nu_i)^{2s' + \frac{1}{2}}$ for some constant $C$ independent of $s'$; without loss of generality, we assume $C \geq 1$. We are then left with 
 \begin{subequations}\label{eqn:leg-prop-2}
 \begin{align}
     \sup_{w \in \sfK'}& \big\|f_\delta(w) - \sum_{\|\nu\|_{\infty} \leq J} \la f_\delta(w), \Phi_\nu \ra \Phi_\nu \big\|_{C^{s'}(\Omega)}\\ & \leq C^d \underbrace{\sup_{w\in \sfK'} \sum_{\|\nu\|_\infty > J} | \la f_\delta(w), \Phi_\nu \ra |\prod_{i =1, \nu_i \neq 0}^d  (\nu_i)^{2s' + \frac{1}{2}}}_{\text{(I)}}.
 \end{align}  
  \end{subequations}
  We introduce the Legendre operator \[\cL_i = -\p_{x_i}((1-x_i^2)\p_{x_i}),\] which is self-adjoint on $L^2(\Omega)$ with zero boundary terms since $(1-x_i^2)$ vanishes at $x_i = \pm 1$, and it holds that for $P_n$ a polynomial in $x_i$, $\cL_i P_n = n(n+1) P_n$ \cite[Section 1.3]{tang2006spectral}. Also note that the $\cL_i$ commute. This allows us to write $\Phi_\nu = \left(\prod_{i=1, \nu_i \neq 0}^d \frac{\cL^k_i}{(\nu_i(\nu_i + 1))^k}\right)\Phi_\nu$. We may then bound quantity (I) by
  \begin{align}
      \sup_{w\in \sfK'}& \sum_{\|\nu\|_\infty > J} | \la f_\delta(w), \Phi_\nu \ra |\prod_{i =1, \nu_i \neq 0}^d  (\nu_i)^{2s' + \frac{1}{2}}\notag \\ &\leq  \sup_{w\in \sfK'}\sum_{\|\nu\|_\infty > J}  |\la \big(\prod_{i=1, \nu_i \neq 0}^d \cL_i^k \big)f_\delta(w), \Phi_\nu \ra | \prod_{i=1, \nu_i \neq 0}^d(\nu_i)^{2s' + \frac{1}{2} - 2k}\notag \\
      & \leq \sup_{w \in \sfK'} \sum_{\|\nu\|_\infty > J} \|\big(\prod_{i=1, \nu_i \neq 0}^d\cL_i^k\big) f_\delta(w) \|\prod_{i = 1, \nu_i \neq 0}^d (\nu_i)^{2s' + \frac{1}{2} - 2k}.\label{eqn:leg-prop2-I}
  \end{align}
  In the above, we have used the fact that $(\nu_i (\nu_i + 1)) \geq \nu_i^2$ and that $\|\Phi_\nu\| = 1$. Since $\cL_i f_\delta(w) = -2x_i \p_{x_i} f_\delta(w) + (1-x_i^2) \p^2_{x_i} f_\delta(w)$ and $x_i \in [-1,1]$, the operator $\prod_{i=1, \nu_i \neq 0}^d \cL^k_i$ is a differential operator of order at most $2kd$ with coefficients bounded on $\Omega$, so we can bound independently of $\nu$,  \[\|\left(\prod_{i=1, \nu_i \neq 0}^d \cL_i^k\right) f_\delta(w) \| \leq C_{k,d} \|f_\delta(w)\|_{C^{2kd}(\Omega)} \leq C_{k,d} C_{k,d,\delta}
  \sup_{w \in \sfK'} \|w\|_{C^{s'}(\Omega)},\] which is bounded in the final inequality for each fixed $\delta$ since $\sfK'$
  is compact in $C^{s'}(\Omega)$. Bound \eqref{eqn:leg-prop2-I} then becomes
  \begin{align}\label{eqn:leg-props2-II}
      C_{k,d}C_{k, d, \delta} \sup_{w \in \sfK'}\|w\|_{C^{s'}(\Omega)} \underbrace{\sum_{\|\nu\|_\infty > J} \prod_{i=1, \nu_i \neq 0}^d (\nu_i)^{2s' + \frac{1}{2} - 2k}}_{\text{(II)}}.
  \end{align}
  Let $b_{\nu_i}= (\nu_i)^{2s' + \frac{1}{2} -2k}$ if $\nu_i > 0$, and $b_0 = 1$ otherwise. Let $S = \sum_{n=1}^\infty n^{2s' + \frac{1}{2} - 2k}$ and $S_J = \sum_{n=1}^J n^{2s' + \frac{1}{2} - 2k}$. Assume integer $k > s' + \frac{3}{4}$ so that $S$ converges. Finally, note that the following sum-product factors as $\sum_{\nu \in \N_0^d}\prod_{i=1}^d b_{\nu_i} = (1+S)^d$. As a result, we may rewrite (II) and bound as
  \begin{align*}
      \sum_{\|\nu\|_\infty > J} \prod_{i=1, \nu_i \neq 0}^d (\nu_i)^{2s' + \frac{1}{2} - 2k} & = (1+S)^d - (1+S_J)^d\\
      & = d\xi^{d-1} (S-S_J) \\
      & = d\xi^{d-1} \sum_{n > J} n^{2s' + \frac{1}{2} -2k} \\
      & \leq C_{d, k, s'}J^{\frac{3}{2} +2s' - 2k},
  \end{align*}
  where we have used the mean value theorem in the second equality for some $\xi \in (1+S_J, 1+S)$.
  Combining the above bound with \eqref{eqn:leg-prop-2}, \eqref{eqn:leg-prop2-I}, and \eqref{eqn:leg-props2-II}, we find that 
  \begin{align*}
      \sup_{w \in \sfK'}& \big\|f_\delta(w) - \sum_{\|\nu\|_{\infty} \leq J} \la f_\delta(w), \Phi_\nu \ra \Phi_\nu \big\|_{C^{s'}(\Omega)} \\ &\leq C_{k,d,\delta,s'}\sup_{w\in \sfK'}\|w\|_{C^{s'}(\Omega)} J^{\frac{3}{2} +2s' -2k}.
  \end{align*}
  Since $\delta$ is fixed and $k > s' + \frac{3}{4}$, the bound tends to $0$ as $J \to \infty$, proving \ref{it:proj-conv}.
  \end{proof}

\section{Proof of Lemma \ref{lem:decomp}} \label{appd:lem-decomp}

Before proceeding with the proof, we repeat the lemma statement for ease of readability.

\lemdecomp*

\begin{proof}[Proof of Lemma \ref{lem:decomp}]
    \indent\textit{Step 1: Approximation by $\{\eta_j\}_{j=1}^\infty$.} 
    Without loss of generality, we may assume $k'=1$, as we can always approximate $C^{s'}(\overline{\Omega};\R^{k'}) = [C^{s'}(\overline{\Omega};\R)]^{k'}$ component-wise. Thus, denote by $C^{s'}(\overline{\Omega})$ the space $C^{s'}(\overline{\Omega};\R)$. 

    Let $\{f_\delta\}_{\delta >0}$ be the family of maps in Assumptions \ref{ass:smoothed_appr}, and $B$ the domain in Assumptions \ref{ass:smoothed_appr} item \ref{it:proj-conv}. Under these assumptions, we have that 
    \begin{equation*}
        \lim_{J \to \infty} \sup_{w \in \mathsf{K}'} \Big\|f_\delta(w) - \sum_{j=1}^J \la f_\delta(w), \eta_j \ra_{L^2(B)}\eta_j\Big\|_{C^{s'}(\Omega)}=0,
    \end{equation*}
    and $\lim_{\delta \to 0} \sup_{w \in \sfK'}\|f_\delta(w) - w\|_{C^{s'}(\Omega)}=0$.
     In particular, given $\epsilon >0$, we can first find $\delta = \delta(\epsilon) >0$ such that $\sup_{w \in \mathsf{K}'}\|f_\delta(w) - w\|_{C^{s'}(\Omega)}  \leq \epsilon/2$, and then $J \in \N$ such that $\sup_{w \in \mfK'}\|f_\delta(w) - \sum_{j=1}^J\la f_\delta(w), \eta_j\ra_{L^2(B)}\eta_j\|_{C^{s'}(\Omega)} \leq \epsilon/2$. It follows from the triangle inequality that
    \begin{equation}\label{eqn:tri-inq-result}
     \sup_{w \in \mfK'} \Big\|w - \sum_{j=1}^J \la f_\delta(w), \eta_j \ra_{L^2(B)} \eta_j\Big\|_{C^{s'}(\Omega)} \leq \epsilon.
     \end{equation}
      As a consequence, it is clear that 
    \begin{align*}
        &\sup_{u \in \mfK} \Big\| \Psi^\dagger(u) - \sum_{j=1}^J \la f_\delta(\Psi^\dagger(u)), \eta_{j}\ra_{L^2(B)}\eta_j \Big\|_{C^{s'}(\Omega)} \\
        & = \sup_{w \in \mfK'} \Big \| w - \sum_{j=1}^J \la f_\delta(w), \eta_j\ra_{L^2(B)} \eta_j \Big \|_{C^{s'}(\Omega)} \leq \epsilon. 
    \end{align*}  
Defining $\beta_j(u) = \la f_\delta(\Psi^\dagger(u)), \eta_j \ra_{L^2(B)}$ we have that $\tilde{\Psi}^\dagger(u) = \sum_{j=1}^J \beta_j(u)\eta_j$ satisfies 
    \begin{equation}\label{eqn:step1-result}
    \sup_{u \in \sfK} \|\Psi^\dagger(u) - \tilde{\Psi}^\dagger(u)\|_{C^{s'}(\Omega)} \leq \epsilon.
    \end{equation}
\indent\textit{Step 2: Construction of $\alpha_j$.} The above inequality is almost the desired result, but $\beta_j$ does not define a continuous functional $L^1(\Omega; \R^k) \to \R$. The remainder of the proof is very similar to that of the proof of \cite[Proposition A.6]{lanthaler2025nonlocality}; we include it here for convenience.
By \cite[Lemma A.2]{lanthaler2025nonlocality}, there exists a continuous operator $\cM_\delta : L^1(\Omega; \R^k) \to C^s(\Obar; \R^k)$ such that over the compact set $\mfK \subset C^s(\Obar; \R^k)$, we have 
\[
\sup_{u \in \mfK}\|u - \cM_\delta u\|_{C^s(\Obar)} \to 0
\]
as $\delta \to 0$. Our goal is to define $\alpha_j: L^1(\Omega; \R^k) \to \R$ by $\alpha_j(u) = \beta_j(\cM_\delta u)$ for suitably chosen $\delta >0$. Recall that, for fixed $\delta_0 > 0$, the set $\mfK_\delta$ defined by $\mfK_\delta = \cup_{0 \leq \delta \leq \delta_0} \cM_\delta(\mfK)$ is a compact subset of $C^s(\Obar;\R^k)$  by \cite[Lemma A.4]{lanthaler2025nonlocality}. Note that for any $j = 1, \dots, J$, by Assumptions \ref{ass:smoothed_appr} property \ref{it:cont-func},together with the continuity of $\Psi^\dagger$, we have a continuous mapping $\beta_j: \mfK_\delta \to \R$. Since $\mfK_{\delta}$ is compact, it follows that there exists a modulus of continuity $\omega: [0,\infty) \to [0, \infty)$, continuous in its argument
and  satisfying $\omega(0) = 0$, such that 
\[
|\beta_j(u) - \beta_j(u')| \leq \omega(\|u - u'\|_{C^s(\Obar)}), \quad \forall u, u' \in \mfK_\delta,
\]
holds for all $j = 1, \dots, J$. It follows that 
\[
|\beta_j(u) - \beta_j(\cM_\delta u)| \leq \omega(\|u - \cM_\delta u) \|_{C^s(\Obar)}), 
\]
for any $0 < \delta \leq \delta_0$. By \cite[Lemma A.2]{lanthaler2025nonlocality}, $\cM_\delta u \to u$ converges uniformly over $u \in \mfK$, so that we may conclude 
\[
\lim_{\delta \to 0} \sup_{u \in \mfK} |\beta_j(u) - \beta_j(\cM_\delta u)| \leq \lim_{\delta \to 0} \omega(\sup_{u \in \mfK} \|u - \cM_\delta u\|_{C^s(\Obar)}) = 0. 
\]
In particular, we can choose $\delta > 0$ sufficiently small, to ensure that $\alpha_j(u):= \beta_j(\cM_\delta u)$ satisfies 
\begin{equation} \label{eqn:step2-result}
\sup_{u \in \mfK} |\beta_j(u) -\alpha_j(u)| \leq \frac{\epsilon}{J\max_{j = 1, \dots, J} \|\eta_j\|_{C^{s'}(\Omega)}}
\end{equation}
for all $j = 1, \dots, J$. Since $\delta >0$, we also note that $\cM_\delta: L^1(\Omega; \R^k) \to C^s(\Obar; \R^k)$ is a continuous mapping, and hence $\alpha_j = \beta_j \circ \cM_{\delta}$ is continuous as a mapping $\alpha_j: L^1(\Omega; \R^k) \to \R$. 

\paragraph{Step 3: (Conclusion)} Combining the results of \eqref{eqn:step1-result} and \eqref{eqn:step2-result}, we obtain 
\begin{align*}
    \sup_{u \in \mfK} \Big \| \Psi^\dagger(u)& - \sum_{j=1}^J \alpha_j(u) \eta_j \Big \|_{C^{s'}(\Omega)}\\ &\leq \sup_{u \in \mfK} \Big\| \Psi^\dagger(u) - \sum_{j=1}^J \beta_j(u) \eta_j \Big\|_{C^{s'}(\Omega)} + \sup_{u \in \mfK} \Big \| \sum_{j=1}^J (\beta_j(u) - \alpha_j(u)) \eta_j \Big \|_{C^{s'}(\Omega)}\\
    & \leq \sup_{u \in \mfK} \Big\| \Psi^\dagger(u) - \sum_{j=1}^J \beta_j(u) \eta_j \Big\|_{C^{s'}(\Omega)} \\&+ J \max_{j = 1, \dots, J} \|\eta_j\|_{C^{s'}(\Omega)} \max_{j = 1, \dots, J} \sup_{u \in \mfK} |\beta_j(u) - \alpha_j(u)| \\
    & \leq 2\epsilon.
\end{align*}
Since $\epsilon >0$ was arbitrary, this proves the claim. 
\end{proof}

\section{Proofs of Lemma \ref{lemma:alpha-approx-single} and \ref{lemma:alpha-approx}}\label{appd:alpha-approx}

Before we proceed with the proofs, we will need the following lemma, which allows us to approximate certain integrals by the integral of a finite-dimensional neural network weighted by a positive function.

\begin{lemma} \label{lem:int-approx}
    Let $\Omega$ be a bounded domain in $\Rd$, and let $\mathsf{K}$ be compact in $L^1(\Omega; \R^k)$ and assume that
    $M_{\mathsf{K}}: = \sup_{u \in \mathsf{K}} \|u \|_{L^\infty}<\infty.$
    Let $\xi \in C(\Obar;\R^k) \cap L^1(\Omega;\R^k)$ be given and fixed. Furthermore, let $\psi \in C(\Omega;\R_{\geq 0})$ be such that $\sup_{y \in \Omega} |\psi(y)| < \infty$
    and such that $\psi$ is $0$ only on a set of Lebesgue measure $0$.  Then for any $\epsilon >0$ there exists a neural network $\tilde{R}:\R^k \times \Obar \to \R$ such that 
    \[
    \sup_{u \in \sfK} \left| \int_{\Omega} \tilde{R}(u(y), y) \psi(y) \dy - \int_{\Omega} u(y) \cdot \xi(y) \dy \right| < \epsilon. 
    \]
\end{lemma}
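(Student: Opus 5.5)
The approach is to divide by $\psi$. The target integrand is $u(y)\cdot\xi(y) = \frac{u(y)\cdot \xi(y)}{\psi(y)}\psi(y)$, so I want a network $\tilde R(v,y)$ that approximates $v\cdot \xi(y)/\psi(y)$. Since $1/\psi$ may blow up near the null set of $\psi$ or near $\p\Omega$, I will work on a large compact set where $\psi$ is bounded below, and make the network contribute nearly nothing elsewhere.

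\textbf{Step 1: choose a good compact set.} Since $\psi>0$ almost everywhere and $\Omega$ has finite measure, $|\{y\in\Omega:\psi(y)\le \tau\}|\to 0$ as $\tau\to 0$. By inner regularity I pick a compact $F\subset \Omega$ with $\psi\ge\tau$ on $F$ and $|\Omega\setminus F|\le \kappa$ for a small $\kappa$. Continuity of $\psi$ on $\Omega$ lets me replace the sublevel set by the compact set $\{y\in\Omega^\gamma:\psi(y)\ge \tau\}$ for suitable $\gamma,\tau$, using the notation \eqref{eq:og}.

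\textbf{Step 2: build a continuous target.} Let $g(y)=\xi(y)\,\chi(y)/\psi(y)$, where $\chi\in C_c(\Omega;[0,1])$ equals $1$ on $F$ and is supported in the open set $\{\psi>\tau/2\}$, which contains $F$. Then $g\in C(\Obar;\R^k)$ extends by zero to all of $\Obar$, and $|g|\le 2\|\xi\|_\infty/\tau$. Define $h(v,y)=v\cdot g(y)$; it is continuous on the compact set $[-M_\sfK,M_\sfK]^k\times\Obar$. By Lemma \ref{lem:NN-approx} with $s=0$, there is a network $\tilde R$ with $|\tilde R-h|\le\epsilon'$ on this set.

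\textbf{Step 3: error split.} For $u\in\sfK$, $|u(y)|_\infty\le M_\sfK$ almost everywhere, so
\[\Big|\int_\Omega \tilde R(u,y)\psi\dy-\int_\Omega u\cdot\xi\dy\Big|\le \epsilon'\|\psi\|_\infty|\Omega| + \int_{\Omega}|u\cdot\xi|\,|1-\chi|\dy .\]
The last term is at most $M_\sfK\sqrt{k}\|\xi\|_\infty|\Omega\setminus F|\le C\kappa$, independently of $u$, since $\chi=1$ on $F$ and $g\psi=\xi\chi$. I choose $\kappa$ first, which fixes $\tau$, $F$ and $\chi$; then I choose $\epsilon'$.

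\textbf{Main obstacle.} The key point is Step 1, together with the interplay of the order of choices. The network bound on $g$ depends on $\tau$, but the error in Step 3 only involves $\epsilon'\|\psi\|_\infty$, which is insensitive to the size of $g$. So the blow-up of $1/\psi$ never enters the final estimate. Compactness of $\sfK$ in $L^1$ is not actually needed beyond the uniform $L^\infty$ bound.
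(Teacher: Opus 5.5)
Your proof is correct and follows essentially the same route as the paper: approximate $v\cdot\xi(y)\chi(y)/\psi(y)$, with the denominator kept away from zero and $\chi$ a cutoff vanishing near $\p\Omega$, by a network on $[-M_\sfK,M_\sfK]^k\times\Obar$, then charge the remaining error to a set of small measure where $\chi\neq 1$ or $\psi$ is small. The differences are cosmetic. The paper regularizes the denominator as $\max(\psi,\gamma)$, uses a cutoff that vanishes only on $\p\Omega$ (so its bad set is $(\Omega\setminus F)\cup\{\psi<\gamma\}$), and normalizes $\int_\Omega\psi=1$; you instead fold the small-$\psi$ region into $F$ through a cutoff supported in $\{\psi>\tau/2\}$, and bound the network error by $\epsilon'\|\psi\|_\infty|\Omega|$.
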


\begin{proof}
Since $\psi \geq 0$ on $\Omega$ and equals $0$ only on a set of measure $0$, we may assume without loss of generality that $\int_\Omega \psi(y) \dy = 1$ since the final layer of $\tilde{R}$ may be rescaled to account for the rescaling of $\psi$.  Define $\Omega^\gamma := \{y \in \Omega: \psi(y) < \gamma\}$ and fix $\gamma$ such that $M_\sfK \|\xi\|_{C(\Obar)} |\Omega^\gamma| < \frac{\epsilon}{4}$. Then define
\begin{equation*}
    \psi^\gamma(y) = \max(\psi(y), \gamma), \quad y \in \Omega.
\end{equation*}
It holds that $\psi^\gamma \in C(\Omega)$. 
  Choose a compact $F \subset \Omega$, contained in the interior of $\Omega$ if $\Omega$ is closed, such that $M_\sfK \|\xi\|_{C(\Obar)}|\Omega \setminus F| < \frac{\epsilon}{4}$.
  Thus, $F$ does not touch $\partial \Omega$, which enables the cutoff function $\chi$, that we now go on to define, to have space to deform continuously to $0$.  Define $\chi \in C(\Obar; [0,1])$ such that $\chi = 1$ on $F$ and $0$ on $\p \Omega$.
  Then set
\begin{equation*}
    g(y)= \begin{cases}
        \frac{\chi(y)}{\psi^\gamma(y)}, \quad y \in \Omega, \\
        0 , \quad y \in \p\Omega.
    \end{cases}
\end{equation*}
Since $\psi^\gamma$ is positive everywhere on $\Omega$ and $\chi$ is $0$ on the boundary, $g$ is continuous on $\Obar$, and  thus the function $R: (a,y)\mapsto (a\cdot \xi(y))g(y)$ is continuous on domain $[-M_{\mathsf{K}}, M_{\mathsf{K}}]^k \times \Obar$. By universality of ordinary neural networks, i.e.~Lemma \ref{lem:NN-approx}, there exists a network $\tilde{R}$ such that 
\begin{equation*}
    \sup_{a \in [-M_{\mathsf{K}}, M_{\mathsf{K}}]^k, y \in \Obar} |R(a,y) - \tilde{R}(a,y)| < \frac{\epsilon}{2}.
\end{equation*}
We proceed with the bound.
\begin{align*}
&\sup_{u \in \mathsf{K}} \left| \int_\Omega \tilde{R}(u(y),y) \psi(y) \dy - \int_\Omega u(y) \cdot \xi(y) \dy \right|\\
& \leq \sup_{u \in \mathsf{K}} \int_\Omega \left|\tilde{R}(u(y),y) - R(u(y),y)\right| \psi(y) \dy  + \left| \int_\Omega R(u(y),y) \psi(y) - u(y) \cdot \xi(y) \dy\right| \\
& \leq \frac{\epsilon}{2}   + \sup_{u \in \mathsf{K}}\left| \int_\Omega u(y)\cdot \xi(y) \left(\frac{\chi(y)\psi(y)}{\psi^\gamma(y)}- 1\right) \dy\right|,
\end{align*}
where we have used the normalization $\int_\Omega \psi(y) \dy = 1$.
Since $\frac{\psi(y)}{\psi^\gamma(y)} \leq 1$ and $\chi \leq 1$, the expression $\frac{\chi(y)\psi(y)}{\psi^\gamma(y)} \in [0,1]$. Additionally, $\frac{\chi(y)\psi(y)}{\psi^\gamma(y)} = 1$ on $F \setminus \Omega^\gamma$, so we have $\left|\frac{\chi(y)\psi(y)}{\psi^\gamma(y)} - 1 \right| \leq \mathbf{1}_{\Omega \setminus F} + \mathbf{1}_{\Omega^\gamma}$. 
Therefore, 
\begin{align*}
    \sup_{u \in \mathsf{K}}\left|\int_{\Omega} u(y) \cdot \xi(y) \left(\frac{\chi(y)\psi(y)}{\psi^\gamma(y)} - 1 \right) \dy\right| & \leq M_{\mathsf{K}}\|\xi\|_{C(\Obar)}(|\Omega \setminus F| + |\Omega^\gamma|).
\end{align*}

By our choices of $\gamma$ and $F$,
\begin{equation*}
    \sup_{u \in \mathsf{K}} \left| \int_\Omega \tilde{R}(u(y),y) \psi(y) \dy - \int_\Omega u(y) \cdot \xi(y) \dy \right| < \frac{\epsilon}{2} + \frac{\epsilon}{4} + \frac{\epsilon}{4} = \epsilon.
\end{equation*}
\end{proof}

We also point out the following fact about continuous, nonzero functions.

\begin{lemma}\label{lem:Gamma}
Let $\Omega$ be a bounded set in $\Rd$. Assume that $\eta$ is continuous on $\Omega$ and not identically $0$. Then for any $\epsilon > 0$, there exists an open ball $\Gamma \subset \Omega$ and constant $\gamma$ such that 
    \[\|\gamma\eta - \mathbf{1}\|_{L^\infty(\Gamma)} < \epsilon.\]
\end{lemma}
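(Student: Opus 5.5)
Since $\eta$ is continuous on $\Omega$ and not identically zero, we begin by choosing a point $x_0 \in \Omega$ with $\eta(x_0) \neq 0$, and we set $\gamma := 1/\eta(x_0)$. The scaled function $\gamma\eta$ is then continuous on $\Omega$ and satisfies $\gamma\eta(x_0) = 1$. The whole argument reduces to continuity of $\gamma\eta$ at the single point $x_0$.

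Given $\epsilon > 0$, continuity at $x_0$ supplies an $r_1 > 0$ such that $|\gamma\eta(x) - 1| < \epsilon/2$ whenever $x \in \Omega$ and $|x - x_0| < r_1$. Because $\Omega$ is open (this is the paper's standing convention, and the lemma is applied to open domains), we can also find $r_2 > 0$ with $B_{r_2}(x_0) \subset \Omega$. Let $r = \min(r_1, r_2)$ and $\Gamma := B_r(x_0)$. Then $\Gamma$ is an open ball contained in $\Omega$, and for every $x \in \Gamma$ we have $|\gamma\eta(x) - 1| \leq \epsilon/2 < \epsilon$. Consequently $\|\gamma\eta - \mathbf{1}\|_{L^\infty(\Gamma)} \leq \epsilon/2 < \epsilon$. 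We use $\epsilon/2$ instead of $\epsilon$ so that the inequality in the conclusion is strict.

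The only delicate point is that $\Omega$ is described merely as a ``bounded set''. If $\Omega$ were closed and $x_0$ happened to lie on $\partial\Omega$, then no ball centred at $x_0$ would fit inside $\Omega$. To handle this, we choose $x_0$ in the interior of $\Omega$. This is always possible for the sets the paper considers: they are open, or they are closures of Lipschitz domains, in which case the interior is dense. Continuity then shows that $\eta$ cannot vanish on the whole interior unless it vanishes identically. With $x_0$ chosen in this way, the argument above applies without change.
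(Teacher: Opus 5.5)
Your proof is correct and is the argument the paper intends: the paper's proof is the single sentence that the result follows from continuity of $\eta$. You fill in the details (normalizing by $\gamma = 1/\eta(x_0)$, shrinking the radius, and using $\epsilon/2$ to make the final inequality strict), and your caveat that $\Omega$ needs nonempty interior is legitimate. The phrase ``bounded set'' in the statement glosses over it, though it holds under the paper's open-domain convention and in the lemma's actual application to the open ball $\hat{\Gamma}$.
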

\begin{proof}
    The result follows directly from the continuity of $\eta$.
\end{proof} 

Before proceeding with the proof of Lemma \ref{lemma:alpha-approx}, we define the standard mollifier. This definition is taken from the standard text \cite[Appendix C.4]{evans2022partial}.

\begin{definition}[Standard Mollifier]\label{def:standard-moll}
    Let $\Omega$ be a bounded Lipschitz domain, and define $\Omega^\epsilon$ as in \eqref{eq:og}. We say that $w_\epsilon: \Omega^\epsilon \to \R$ is the \emph{standard mollification} of 
$w \in L^1(\Omega)$, if \begin{equation}\label{eqn:standard-moll}
w_\epsilon := \rho_\epsilon \ast w, \; \text{ in } \Omega^\epsilon \quad \text{ for } \quad \rho_\epsilon(x) := \frac{1}{\epsilon ^d}\rho\left(\frac{x}{\epsilon}\right),
\end{equation}
where
\[\rho(x) := \begin{cases}
    C\exp\left(\frac{1}{|x|^2 -1}\right), \quad &\text{if } |x| < 1 \\
    0, \quad &\text{if } |x| \geq 1.
\end{cases}\]
is referred to as the \textit{standard mollifier}. \hfill$\lozenge$
\end{definition}

We now restate and prove the following two lemmas, stated originally in Section \ref{ssec:NNO-approx}. Since the proofs begin in the same way, we initiate the proofs together and make a note when they diverge. 

\lemalphaapproxsingle*

\lemalphaapprox*

\begin{proof}[Proof of Lemmas \ref{lemma:alpha-approx-single} and \ref{lemma:alpha-approx}]
        We can identify any $u \in L^1(\Omega;\R^k)$ with a function in $L^1(B;\R^k)$ for any set $B \supset \Omega$ via an extension of $u(x) = 0$ for $x \in B\setminus \Omega$. Then the compact subset $\sfK \subset L^1(\Omega;\R^k)$ can be identified with a compact subset of $L^1(B;\R^k)$. Let $\rho_\delta$ for $\delta > 0$ be the standard mollifier of Definition \ref{def:standard-moll}. We denote by $u_\delta(x) = (u \ast \rho_\delta)(x), \quad x \in \Omega,$ the mollification of $u$ extended to $B$ by $0$ outside of $\Omega$. Since $\sfK \subset L^1(\Omega;\R^k)$ is compact, 
    \[\lim_{\delta \to 0} \sup_{u \in \sfK}\|u - u_{\delta}\|_{L^1(\Omega;\R^k)} = 0.\]
    Since $\alpha:L^1(\Omega;\R^k)\to\R$ is continuous, it follows that $\alpha_{\delta}:L^1(\Omega;\R^k)\to\R$ defined by $\alpha_\delta(u) = \alpha(u_\delta)$ for $\delta >0$, converges uniformly over $\sfK$ as $\delta \to 0$: 
    \begin{equation}\label{eqn:alpha-limsup}
    \lim_{\delta \to 0} \sup_{u \in \sfK} |\alpha(u) - \alpha_{\delta}(u) |\leq \lim_{\delta \to 0} \sup_{u \in \sfK} |\alpha(u) - \alpha(u_\delta)| =0.
    \end{equation}
Let $\{\xi_j\}_{j=1}^\infty$ be any orthonormal basis of $L^2(\Omega;\R^k)$, which we may additionally choose to be smooth; $\xi_j \in C^\infty(\Omega;\R^k)$. Since $L^2(\Omega) \hookrightarrow L^1(\Omega)$ and $\{u_\delta: \; u \in \sfK\}$ is compact in $L^2(\Omega)$, due to being the image of compact $\sfK$ in $L^1$ under the continuous map of the mollification,  it holds that 
    \begin{equation}\label{eqn:alpha-projection}
        \lim_{J \to \infty}\sup_{u \in \sfK} \|u_\delta - \sum_{j=1}^J \la u_\delta, \xi_j \ra\xi_j\|_{L^1(\Omega;\R^k)} = 0.
    \end{equation}
  
    Then define
    \begin{equation*}
        \alpha_{\delta, J}(u) = \alpha\left(\sum_{j=1}^J \la u_\delta, \xi_j \ra \xi_j\right). 
    \end{equation*}
    Furthermore, using evenness of the mollifier, we have the identity
   \begin{subequations}\label{eqn:conv-identity}
    \begin{align}
        \la u_\delta, \xi_j \ra &= \int_{\Omega} u_\delta(y) \cdot \xi_j(y) \dy = \int_{\R^d} (u \ast \rho_\delta)(y) \cdot \xi_j(y) \dy \\
        & = \int_{\R^d} u(y) \cdot (\xi_j \ast \rho_{\delta}) \dy = \int_{\Omega} u(y) \cdot \xi_{j, \delta}(y)  \dy
    \end{align}
    \end{subequations}
    with the convention that $u$ and $\xi_j$ are expanded by $0$ outside of $\Omega$, and 
    \begin{align}
       \xi_{j, \delta}(y) = (\xi_j \ast \rho_\delta)(y), \quad y \in \Obar.
    \end{align}
    Thus, we may rewrite 
    \begin{equation}\label{eqn:alpha-delta-J}
     \alpha_{\delta, J}(u) = \alpha \Big(\sum_{j=1}^J \la u, \xi_{j, \delta} \ra \xi_j\Big)
    \end{equation}
    From \eqref{eqn:alpha-limsup}, \eqref{eqn:alpha-projection}, and continuity of $\alpha$, for any $\epsilon_\alpha > 0$, it holds that for $\delta= \delta(\sfK) > 0$ small enough and $J = J(\delta, \sfK) \in \N$ large enough, 
    \begin{equation}\label{eqn:alpha-convergence}
        \sup_{u \in \mathsf{K}}|\alpha(u) - \alpha_{\delta, J}(u)| \leq \epsilon_\alpha. 
    \end{equation}
    
    We will thus seek to approximate $\alpha_{\delta, J}(u)\eta$ by the NNO architecture. Recall that $\eta_1$ is the single basis function satisfying properties \ref{enum:eta1-1}--\ref{enum:eta1-4}, and thus satisfies the conditions on $\psi$ in Lemma \ref{lem:int-approx}. Additionally, $\xi_{j, \delta} \in C(\Obar; \R^k) \cap L^1(\Omega; \R^k)$ and thus satisfies the conditions on $\xi$ for the same lemma. Parallelizing the networks constructed in Lemma \ref{lem:int-approx}, for any $\epsilon' >0$, there exists a neural network $R': \R^k \times \Obar \to \R^J$, $(v,x) \mapsto R'(v,x) = \left(R_1(v,x), \dots, R_J(v,x)\right)$ such that 
    \begin{equation}\label{eqn:R-approx}
    \sup_{u \in \sfK} \left|\int_{\Omega} R_j (u(y), y)\eta_1(y) \dy - \int_{\Omega} u(y) \cdot \xi_{j,\delta}(y) \dy\right| < \epsilon'
    \end{equation}
    for $j = 1, \dots, J$. We assume $\epsilon' < 1$.  It is at this point that the two proofs diverge.

\textit{Second half of the proof of Lemma \ref{lemma:alpha-approx-single}:} 
Recall that here $\eta_1$ satisfies \ref{enum:eta1-1}-\ref{enum:eta1-4} and $\eta_2 \in C^{s'}(\Obar)$ and $\eta_2 \neq 0$ on $\Obar$. Without loss of generality we may assume that $\eta_2 > 0$ on $\Obar$. Since $\Obar$ is compact and $\eta_2$ is continuous and positive, 
\begin{align*}
    \eta_2^- := \min_{x\in \Obar} \eta_2(x) > 0, \quad \eta_2^+ := \max_{x \in \Obar} \eta_2(x) < \infty. 
\end{align*}

We retain $R' = (R_1, \dots, R_J)$ and \eqref{eqn:R-approx} from the first half of the proof, and abbreviate $c_j := \la u, \xi_{j,\delta} \ra$ and $\widehat{c}_j := \int_\Omega R_j(u(y), y)\eta_1(y) \dy$ so that \eqref{eqn:R-approx} reads
$\sup_{u \in \sfK} \max_{j \in [J]} |\widehat{c}_j - c_j| \leq \epsilon'$ with $\epsilon' < 1$. We have that 
\[\max_{j \in [J]} |c_j| \leq \|u_\delta\| \leq \|u\| \leq \sup_{u \in \sfK} \|u\| < \infty.\]
As a result, $\max_{j \in [J]} |\widehat{c}_j| \leq \sup_{u \in \sfK}\|u\| + 1 \eqcolon M_1$ uniformly over $\sfK$. By Lemma \ref{lem:NN-approx}, for any $\epsilon_\eta >0$, there exists a finite-dimensional neural network $R_\eta$ such that 
\begin{equation}
    \|R_\eta - \eta\|_{C^{s'}(\Obar)} < \epsilon_\eta \leq 1.
\end{equation}
Define $M_e = \|\eta\|_{L^\infty(\Obar)} + 1$, which serves as a bound on $\|R_\eta\|_{L^\infty(\Obar)}$. Define the lifting layer of the NNO $\cR: u(x) \mapsto R(u(x), x)$ by 
\begin{equation}
    R(u(x), x) = (R_1(u(x),x), \dots, R_J(u(x),x), 1, R_\eta(x)).
\end{equation}
We draw attention to the fact that the $1$ is easily achieved \textit{exactly} due to the final layer of finite-dimensional neural networks being affine in Definition \ref{def:finite-NN}. Setting the bias term equal to this value of $y$ and the linear weights to $0$ for the channel corresponding to $1$ achieves the value exactly for every input. We set the single hidden layer of our NNO as 
\begin{equation}\label{eqn:L1-single-hidden}
(\cL_1 v)(x) = \sigma\Big(A_1(W_1 v(x) + T_{1,0} \int_{\Omega} v(y) \eta_1(y) \dy\; \eta_2(x)) + b_1\Big).
\end{equation}
We set $T_{1,0}$ to be a diagonal matrix, equal to the identity $I_J$ for channels $1, \dots, J$, equal to $\frac{1}{\|\eta_1\|_{L^1(\Omega)}}$ on channel $J+1$ and equal to $0$ on the final $k'$ entries (corresponding to the $R_\eta$ block). We set $W_1$ to copy the final $k'$ entries ($R_\eta$ block) and $0$ the other entries. The end result of these choices is that before applying $A_1$ and $b_1$ and the activation, we obtain a vector 
\[z(x) := (\widehat{c}_1\eta_2(x), \dots \widehat{c}_J\eta_2(x), \eta_2(x), R_\eta(x)),\]
which for all $u \in \sfK, x \in \Obar$, lies in the box 
\[\cB: [-M_1 \eta_2^+, M_1\eta_2^+]^J \times [\eta_2^-, \eta_2^+] \times [-M_e, M_e]^{k'}.\]
Note that nonnegativity of $\eta_1$ is used to achieve $\eta_2(x)$ in the second-to-last channel of $z$. Define $C_z := \sup_{u \in \sfK}\|z\|_{C^{s'}(\Obar)}$. Set 
\[\tilde{Q}(\{c_j\}_{j=1}^J) = \alpha\Big(\sum_{j=1}^J c_j \xi_j\Big).\]
$\tilde{Q}$ is continuous on $[-M_1, M_1]^J$, so by Lemma \ref{lem:NN-approx}, for any $\epsilon_Q$, there exists a neural network $Q_0$ with 
\begin{equation*}
    \|Q_0 - \tilde{Q}\|_{L^\infty([-M_1, M_1]^J)} \leq \epsilon_Q < 1.
\end{equation*}
 Define the bound $M_{Q_0} = \|Q_0\|_{L^\infty([-M_1, M_1]^J)} \leq \|\tilde{Q}\|_{L^\infty([-M_1, M_1]^J)} + 1$. Define the function $G: \cB \to \R^{k'}$ by 
\[G(\{a_j\}_{j=1}^J, b, e) = Q_0\Bigg( \Big\{\frac{a_j}{b}\Big\}_{j=1}^J\Bigg)e.\]
Since $b \geq \eta_2^- > 0$ on $\cB$, the map $(a_j,b) \mapsto a_j/b$ is $C^\infty$ on that domain, and $\sigma \in C^\infty$ by assumption, so $Q_0 \in C^\infty([-M_1, M_1]^J)$ and $G \in C^\infty(\cB)$. By Lemma \ref{lem:NN-approx}, there exists a finite-dimensional neural network $\cN$ such that 
\begin{equation}
    \|\cN - G\|_{C^{s'}(\cB)} < \epsilon_\cN.
\end{equation}

Set $A_1, b_1$ of $\cL_1$ defined in \eqref{eqn:L1-single-hidden} to be the first layer weights and bias of $\cN$, with $\cQ$ realizing its remaining layers. Then 
$$(\cQ \circ \cL_1 \circ \cR)(u)(x) = \cN(z(x)) \eqcolon \tilde{\alpha}(u)(x).$$
We note that $M_1, \eta_2^-, \eta_2^+, M_e, \cB,$ and $M_{Q_0},$ depend only on $\sfK$, $\eta$, $\eta_2$, $\delta$, $J$, and on the fact that we assume the initial bounds of $\epsilon', \epsilon_\eta, \epsilon_Q$ as being $\leq 1$; they are thus unaffected when these bounds are subsequently decreased below $1$. 

We now estimate the error bound. Note that $z$ is a $C^{s'}(\Obar;\R^{J+1+k'})$ map with components $\widehat{c}_j\eta_2$, $\eta_2$, and $R_\eta \in C^{s'}$. We decompose
\begin{equation*}
    \|\alpha(u)\eta - \tilde{\alpha}(u)\|_{C^{s'}(\Omega)} \leq \underbrace{\|\cN\circ z - G\circ z\|_{C^{s'}(\Omega)}}_{\text{(I)}} + \underbrace{\|G \circ z - \alpha(u) \eta\|_{C^{s'}(\Omega)}}_{\text{(II)}}. 
\end{equation*}
For (I), by multivariate Fa\`a di Bruno's formula, 
\begin{equation*}
    \text{(I)} \leq c_{s',d}(1 + C_z)^{s'}\epsilon_\cN \eqcolon C_\cN\epsilon_\cN,
\end{equation*}
where $c_{s',d}> 0$ is a constant depending on $s'$ and $d$.
For (II), due to the trick of obtaining a channel with $\eta_2(x)$ exactly, the division is exact, and since $\eta_2(x) \geq \eta_2^- >0$, we have that 
\[G(z(x)) = Q_0\left(\Big\{\frac{\widehat{c}_j \eta_2(x)}{\eta_2(x)}\Big\}_{j=1}^J\right)R_\eta(x) = Q_0(\{\widehat{c}_j\})R_\eta,\]
with $Q_0(\{\widehat{c}_j\})$ \textit{independent} of $x$. Then 
\begin{align*}
    \text{(II)} &= \|Q_0(\{\widehat{c}_j\}_{j=1}^J)R_\eta -\alpha(u)\eta\|_{C^{s'}(\Omega)}\\
    & \leq |Q_0(\{\widehat{c}_j)\}_{j=1}^J|\|R_\eta - \eta\|_{C^{s'}(\Omega)} + |Q_0(\{\widehat{c}_j\}_{j=1}^J) - \alpha(u)| \|\eta\|_{C^{s'}(\Omega)}\\
    &\leq M_{Q_0}\epsilon_\eta + |Q_0(\{\widehat{c}_j\}_{j=1}^J) - \alpha(u)| \|\eta\|_{C^{s'}(\Omega)}.
\end{align*}
We have that 
\begin{align*}
    |Q_0(\{\widehat{c}_j\}_{j=1}^J) - \alpha(u)| & \leq |Q_0(\{\widehat{c}_j\}_{j=1}^J) - \tilde{Q}(\{\widehat{c}_j\}_{j=1}^J)| \\ &+ |\tilde{Q}(\{\widehat{c}_j\}_{j=1}^J) - \tilde{Q}(\{c_j\}_{j=1}^J)| + |\tilde{Q}(\{c_j\}_{j=1}^J) - \alpha(u)| \\
    & \leq \epsilon_Q  + |\alpha(\sum_{j=1}^J \widehat{c}_j \xi_j) - \alpha(\sum_{j=1}^J c_j \xi_j)|+ |\alpha_{\delta, J}(u) - \alpha(u)|.
\end{align*}
Let $\omega_\alpha$ be the modulus of continuity of $\alpha$. Taking the supremum over $\sfK$, we have 
\begin{align*}
    \sup_{u \in \sfK} \text{(II)} & \leq M_{Q_0} \epsilon_\eta +  \sup_{u \in \sfK}\Bigg(\epsilon_Q + |\alpha(\sum_{j=1}^J \widehat{c}_j \xi_j) - \alpha(\sum_{j=1}^J c_j \xi_j)| + |\alpha_{\delta, J}(u) - \alpha(u)|\Bigg)\|\eta\|_{C^{s'}(\Omega)}\\
    & \leq M_{Q_0}\epsilon_\eta  + (\epsilon_Q + \omega_\alpha(|\Omega|^{1/2}J^{1/2}\epsilon') + \epsilon_\alpha)\|\eta\|_{C^{s'}(\Omega)},
\end{align*}
where we have used in the final inequality the conversion from the bound $$\|\sum_{j=1}^J (\widehat{c}_j - c_J) \xi_j \|_{L^2(\Omega)} \leq J^{1/2}\epsilon'$$ to obtain the bound 
$$\|\sum_{j=1}^J (\widehat{c}_j - c_J) \xi_j \|_{L^1(\Omega)} \leq |\Omega|^{1/2} J^{1/2}\epsilon',$$
using Cauchy-Schwarz. Collecting (I) and (II), 
\begin{align*}
    \sup_{u \in \sfK} \|\alpha(u)\eta - \tilde{\alpha}(u)\|_{C^{s'}(\Omega)} \leq C_\cN\epsilon_\cN +  M_{Q_0}\epsilon_\eta  + (\epsilon_Q + \omega_\alpha(|\Omega|^{1/2}J^{1/2}\epsilon') + \epsilon_\alpha)\|\eta\|_{C^{s'}(\Omega)}.
\end{align*}
Given $\varepsilon$, first fix $\delta, J$ via \eqref{eqn:alpha-convergence} such that $\epsilon_\alpha \|\eta\|_{C^{s'}(\Omega)} <\varepsilon/5$. The constants $C_\cN$, $M_{Q_0}$, and the domain $\cB$ are then determined. Choosing 
\[
\epsilon_{\mathcal N}<\tfrac{\varepsilon}{5C_{\mathcal N}},\quad
\epsilon_\eta<\tfrac{\varepsilon}{5M_{Q_0}},\quad
\epsilon_Q<\tfrac{\varepsilon}{5\|\eta\|_{C^{s'}(\Omega)}},\quad
\epsilon'\ \text{with}\ \omega_\alpha(|\Omega|^{1/2}\sqrt J\epsilon')<\tfrac{\varepsilon}{5\|\eta\|_{C^{s'}(\Omega)}}
\]
gives $\sup_{u\in\sfK}\|\alpha(u)\eta-\tilde\alpha(u)\|_{C^{s'}(\Omega)}<\varepsilon$.

\textit{Second half of the proof of Lemma \ref{lemma:alpha-approx}:}
    
    We may assume $\eta$ is not identically zero, since if it were we could set all NNO weights to be zero and satisfy the lemma statement trivially. Since $\eta$ and $\psi_{L,m^*}$ are both continuous on $\Omega$, $\eta$ is not identically $0$, and $\psi_{L,m^*}$ equals $0$ only on a set of Lebesgue measure $0$, there exists an open ball $\hat{\Gamma} \subset \Omega$ on which neither $\eta$ nor $\psi_{L,m^*}$ is equal to $0$. It is important to note that this property implies that $\psi_{L,m^*}$ \textit{does not change sign} on $\widehat{\Gamma}$. For some $\epsilon_\gamma >0$, let $\Gamma$ be the open ball in $\hat{\Gamma}$ described in Lemma \ref{lem:Gamma} for $\eta_2$ on $\hat{\Gamma}$, and assume $\Gamma$ has radius $r$. We then have that for some $\gamma$, 
        \begin{equation}\label{eqn:eta2-gamma}
        \|\gamma\eta_2 - \mathbf{1}\|_{L^\infty(\Gamma)} < \epsilon_\gamma.\end{equation}
     For $\delta_1 >0$, let $\tilde{\Gamma}_{\delta_1}$ be the ball that shares a center with $\Gamma$ and has radius $r-\delta_1$; similarly, let $\tilde{\Gamma}_{\delta_1/2}$ be the ball that shares a center with $\Gamma$ and has radius $r- \frac{\delta_1}{2}$. Assume $\delta_1 < \frac{r}{2}$. Let $\Gamma_{\delta_1}$ be the standard $\frac{\delta_1}{2}$-mollification of the indicator function $\mathbf{1}_{\tilde{\Gamma}_{\frac{\delta_1}{2}}}$ (see Definition \ref{def:standard-moll}). Clearly, $\Gamma_{\delta_1} = 1$ inside $\tilde{\Gamma}_{\delta_1}$ and $0$ on $\Omega \setminus \Gamma$; furthermore, $\Gamma_{\delta_1} \in C^{\infty}(\overline{\Omega})$. 
     Since $\Gamma_{\delta_1}$ is a continuous map from $\Obar \to \R $, for any $\epsilon_0 > 0$, by Lemma \ref{lem:NN-approx} there exists a  neural network $\tilde{R}$ such that 
    \begin{equation}\label{eqn:R-tilde-bnd}
        \|\Gamma_{\delta_1}- \tilde{R}\|_{L^\infty(\Obar)} < \epsilon_0.
    \end{equation}
    For convenience, assume $\epsilon_0 < \frac{1}{2}$ such that the image of $\Obar$ under $\tilde{R}$ is contained in $[-\frac{1}{2}, \frac{3}{2}]$.  We note that without loss of generality, we can assume that the $\eta_1$ and $\eta_2$ pair belong to the first hidden layer, since any preceding hidden layers could simply have the weights $T \equiv  0$ and the remaining finite-dimensional neural network form either part of the lifting neural network $R$ or approximate the identity, by assumption on $\sigma$. Finally, set the initial neural network $R$ of the NNO to be the parallelization of $R'$ from \eqref{eqn:R-approx} and $\tilde{R}$. 
    Thus, $R: \R^k \times \Obar\to \R^{J +1}$. Let the first hidden layer of our NNO have the form 
    \begin{equation}\label{eqn:first-layer}
    (\cL_1v)(x) = \sigma\Big(A_1(W_1v(x) +  T_{1,1}\int_{\Omega} v(y) \eta_1 (y) \dy \;\eta_2(x)) + b_1 \Big),
    \end{equation}
    where $W_1 = \text{diag}([\mathbf{0}_J, 1])$, $T_{1,1} = \text{diag}([\gamma\mathbf{1}_J, 0])$, and $A_1$ and $b_1$ are to be defined later. As in the proof of Lemma \ref{lemma:alpha-approx-single}, we abbreviate $c_j := \la u, \xi_{j,\delta} \ra$ and $\widehat{c}_j := \int_\Omega R_j(u(y), y)\eta_1(y) \dy$ so that \eqref{eqn:R-approx} reads
$\sup_{u \in \sfK} \max_{j \in [J]} |\widehat{c}_j - c_j| \leq \epsilon'$ with $\epsilon' < 1$. The end result of the choices for the first layer \eqref{eqn:first-layer} and $R$ is that before applying $A_1$ and $b_1$ and the activation, we obtain a vector 
    \[z(x) = [\widehat{c}_1\gamma \eta_2(x), \dots, \widehat{c}_J\gamma \eta_2(x), \tilde{R}(x)].\]
     Choose $M_1 >0$ such that $\{z(x)|\; u \in \sfK, x \in \Omega \} \subset [-M_1, M_1]^J \times [-\frac{1}{2}, \frac{3}{2}]$, which is possible since $\sup_{y \in \Omega}|u(y)| < \infty$ and $\sup_{y \in \Omega} |\eta_2(y)| < \infty$ by assumption. 

    Similarly, choose $M_2 >0 $ such that the image of the compact set $\sfK \subset L^1(\Omega; \R^k)$ under the mapping    \[
    (u,x) \mapsto \left(c_1\gamma\eta_2(x), \dots, c_J\gamma \eta_2(x) \right)
    \]
    is contained in $[-M_2,M_2]^J$ for all $u \in \sfK$ and $x \in \Omega$.
    Let $M^* = \max\{M_1, M_2\}$. 
    Now define 
\begin{equation}
    \tilde{Q}: \{c_j \}_{j=1}^J\mapsto \alpha\left(\sum_{j=1}^J c_j \xi_j\right),
\end{equation}
where $\{c_j\}_{j=1}^J \in \R^J$. Let $M_\alpha$ be such that the image of $[-M^*, M^*]^J$ under $\tilde{Q}$ is contained in $[-M_{\alpha}, M_{\alpha}]$. Let $\text{id}$ be the identity function, and let 
$\times(a,b):=ab$ denote the scalar multiplication operator between the outputs of two functions $a$ and $b$. By Lemma \ref{lem:NN-approx}, there exists a neural network $Q$ such that for any $\epsilon_2 > 0$, 
\begin{equation}\label{eqn:bnd-Q}
\|Q - \times(\tilde{Q}, \text{id})\|_{L^\infty([-M^*,M^*]^J \times [-\frac{1}{2}, \frac{3}{2}])} < \epsilon_2.
\end{equation}
Let $D$ be the depth of $Q$ and define $M_Q$ such that the image of $[-M^*,M^*]^J \times [-\frac{1}{2}, \frac{3}{2}]$ under $Q$ and under $\times(\tilde{Q}, \text{id})$ is contained in $[-M_Q, M_Q]$.

We may set the weight choices $A_1$ and $b_1$ of $\cL_1$ in \eqref{eqn:first-layer} as well as $\{W_i\}_{i=2}^D$, 
and $\{b_i\}_{i=2}^D$ of hidden layers $\mathcal{L}_2, \dots, \mathcal{L}_D$ so that $\cL_D \circ \dots \circ \cL_2 \circ \sigma(A_1 v +b_1)$ exactly emulates the neural network $Q(v)$ of \eqref{eqn:bnd-Q} for input $v$.
Within these layers, we may simply set $T_{\ell, m} \equiv 0$.

Set the number of hidden layers $L = D+1$. By assumption $\eta$ is one of the basis elements $\phi_{L,m^*}$ for some $m^* \leq M$, and the corresponding $\psi_{L,m^*}$ is such that $\int_{\tilde{\Gamma}_{\delta_1}} \psi_{L, m^*}(y) \dy \neq 0$ for any $\delta_1$ since we chose $\Gamma \subset \hat{\Gamma}$ to be a set on which $\psi_{L,m^*}$ did not equal $0$.
We may then set the last hidden layer $\cL_{L}$ of the NNO as
\begin{equation}
\mathcal{L}_{L}(v) = \sigma\Big(A_{L}\big(\sum_{m=1}^MT_{L, m} \int_\Omega v(y) \psi_{L,m}(y) \dy\; \phi_{L,m}\big) + b_{L}\Big),
\end{equation}
where $T_{L, m} = 0$ for $m \neq m^*$ and $T_{L, m^*} = \frac{1}{\int_{\tilde{\Gamma}_{\delta_1}} \psi_{L, m^*}(y)\dy}$. 
 The next step is to ensure that the set $$B_{m^*} := \{T_{L, m^*} \int_{\Omega} v(y) \psi_{L, m^*}(y) \dy \; \eta(x): \; v(y) \in [-M_Q, M_Q]\text{ for } y \in \Omega, x \in \Omega\}$$ is bounded. We have 
\begin{align}\label{eqn:M_J}
    \sup_{x \in \Omega} |T_{L, m^*} \int_{\Omega} v(y) \psi_{L, m^*}(y)\dy \;\eta(x)| \leq |T_{L, m^*}|M_Q\|\eta\|_{L^\infty(\Omega)} \|\psi_{L,m^*}\|_{L^1(\Omega)}.
\end{align}
The coefficient $T_{L, m^*}$ is bounded for fixed $m^*$, $\delta_1$, and $\Gamma$, and the remaining quantities are bounded by assumption. Thus we deduce there exists $M_{m^*}$ such that $B_{m^*} \subset [-M_{m^*}, M_{m^*}]^{k'}$. By Assumptions \ref{ass:sig-eta}, for any $\epsilon_I$, there exists an identity approximating neural network $\cI$ that preserves $0$ such that 
\begin{equation}\label{eqn:I2}
    \|\cI - \text{id}\|_{C^{s'}([-M_{m^*}, M_{m^*}]^{k'})}< \epsilon_I. 
\end{equation}
Let $\cI$ define the parameters $A_{L}$ and $b_{L}$ as well as those of the output neural network $\cQ$. Denote by $\tilde{\alpha}$ the resulting constructed NNO: 
\begin{equation*}
    \tilde{\alpha}(u) := \cQ \circ \cL_{L} \circ \dots \circ\cL_{1} \circ \cR(u).
\end{equation*}
We now prove the approximation capability of $\tilde{\alpha}$.
\begin{align}
    \sup_{u \in \sfK} \|\alpha(u) \eta - \tilde{\alpha}(u)\|_{C^{s'}(\Omega)} & \leq \sup_{u \in \sfK} \|\alpha(u) \eta - \alpha_{\delta, J}(u) \eta\|_{C^{s'}(\Omega)} + \sup_{u \in \sfK}  \|\alpha_{\delta, J}(u)\eta - \tilde{\alpha}(u) \|_{C^{s'}(\Omega)}\notag\\
    \label{eqn:alpha-eqn1}& \leq \epsilon_\alpha \|\eta\|_{C^{s'}(\Omega)} + \sup_{u \in \sfK}  \|\alpha_{\delta, J}(u)\eta - \tilde{\alpha}(u) \|_{C^{s'}(\Omega)}
\end{align}
by \eqref{eqn:alpha-convergence}. The first term is bounded by assumption on $\eta$. We expand the second term: 
\begin{align*}
    &\sup_{u \in \sfK}  \|\alpha_{\delta, J}(u)\eta - \tilde{\alpha}(u) \|_{C^{s'}(\Omega)} = \sup_{u \in \sfK} \|\alpha_{\delta, J}(u)\eta - \cQ \circ \cL_{L} \circ \dots \circ \cL_{1} \circ R(u)\|_{C^{s'}(\Omega)}.
\end{align*}
Putting together our definitions of $\cQ$, $\cL_{L}, \dots, \cL_1$, and $R$ and letting $\psi := \psi_{L,m^*}$ we have that 
\begin{align}\label{eqn:full-expression}
    \cQ \circ & \cL_{L} \circ \dots  \circ\cL_{1} \circ \cR(u) = \notag\\
    &\cI\big( T_{L,m^*} \int_\Omega Q\big(\gamma\int_\Omega R'(u(y'),y')\eta_1(y') \dy' \; \eta_2(y), \tilde{R}(y)\big) \psi(y) \dy\; \eta \big).
\end{align}
The remainder of the proof simply involves assembling the individual bounds on each component one at a time. Note that we have already verified that the argument of each neural network -- specifically, the arguments of each of $\cI$, $Q$, $R'$, and $\tilde{R}$ -- falls within the correct domain of approximation. For visual clarity in the remainder of the bound, let $\mathbf{a}_f$ denote the argument of function $f$ in the right-hand side of \eqref{eqn:full-expression}; e.g.~$\mathbf{a}_{R'}$ denotes $u(y'), y'$. Then 
\begin{align}\label{eqn:bnd-begin}
    \sup_{u \in \sfK} \|\alpha_{\delta, J}(u) \eta - \tilde{\alpha}(u) \|_{C^{s'}(\Omega)} & \leq \underbrace{\sup_{u \in \sfK} \|\alpha_{\delta, J}(u)\eta - \mathbf{a}_{\cI}\|_{C^{s'}(\Omega)}}_{\text{(I)}} + \sup_{u \in \sfK} \|\cI(\mathbf{a}_{\cI}) - \mathbf{a}_{\cI} \|_{C^{s'}(\Omega)}.
\end{align}
For the second term, 
\begin{align}
    \sup_{u \in \sfK} \|\cI (\mathbf{a}_{\cI}) - \mathbf{a}_{\cI}\|_{C^{s'}(\Omega)} &\leq c_{s',d} \epsilon_I (1 + \|\mathbf{a}_{\cI}\|_{C^{s'}(\Omega)})\notag\\
    & \leq c_{s',d} \epsilon_I (1 + |T_{L, m^*}| \|\psi\|_{L^1(\Omega)}M_Q\|\eta\|_{C^{s'}(\Omega)})\notag\\
    & \leq C_1 \epsilon_I\label{bnd:C1}
\end{align}
by Fa\`a di Bruno's formula, \eqref{eqn:M_J}, and \eqref{eqn:I2}, where \newline $C_1 = c_{s', d}(1 + C_{\Gamma, \psi}\|\psi\|_{L^1(\Omega)}M_Q \|\eta\|_{C^{s'}(\Omega)})$, and  $C_{\Gamma, \psi} = \sup_{0 < \delta_1 < \frac{r}{2}} \left|\frac{1}{\int_{\tilde{\Gamma}_{\delta_1}} \psi(z) \dz} \right|< \infty$, the bound on $|T_{L,m^*}|$, holds since $\psi \neq 0$ does not change sign on $\Gamma$ and is continuous and bounded on $\Omega$.
Critically, we may take $\delta_1 \to 0$ without affecting $C_{\Gamma, \psi}$.
For (I), 
\begin{align}
    \sup_{u \in \sfK} &\|\alpha_{\delta, J}(u) \eta - \mathbf{a}_{\cI}\|_{C^{s'}(\Omega)}\notag \\& \leq \sup_{u \in \sfK} \left|\alpha_{\delta, J}(u) -  T_{L,m^*} \int_\Omega Q(\mathbf{a}_{Q}) \psi(y) \dy\right| \|\eta\|_{C^{s'}(\Omega)}.\label{eqn:bnd-on-I}
\end{align}
The quantity $\|\eta\|_{C^{s'}(\Omega)}$ is bounded by assumption. Bounding the first factor, 
\begin{align*}
    \sup_{u \in \sfK}& \left|\alpha_{\delta, J}(u) -  T_{L,m^*} \int_\Omega Q(\mathbf{a}_{Q}) \psi(y) \dy\right| \leq\\
    &\underbrace{\sup_{u \in \sfK}\Big|\alpha_{\delta, J}(u) - T_{L, m^*} \int_\Omega \alpha\left(\sum_{j=1}^J \widehat{c}_j\gamma \eta_2(y) \xi_j\right)\tilde{R}(y) \psi(y) \dy \Big|}_{\text{(II)}}\\ & + |T_{L, m^*}| \sup_{u \in \sfK} \Big| \int_\Omega \alpha\left(\sum_{j=1}^J \widehat{c}_j\gamma \eta_2(y) \xi_j\right)\tilde{R}(y) \psi(y) \dy - \int_\Omega Q(\mathbf{a}_Q)\psi(y) \dy\Big|.
\end{align*}
 The second term is bounded by 
\begin{equation}\label{bnd:C2}
\epsilon_2|T_{L,m^*}|  \|\psi\|_{L^1(\Omega)} = C_2\epsilon_2
\end{equation}
by equation \eqref{eqn:bnd-Q}, where $C_2 = C_{\Gamma, \psi}\|\psi\|_{L^1(\Omega)}$.
Continuing with the bound on (II):
\begin{align}
    \sup_{u \in \sfK}&\Big|\alpha_{\delta, J}(u) - T_{L, m^*} \int_\Omega \alpha\left(\sum_{j=1}^J \widehat{c}_j\gamma \eta_2(y) \xi_j\right)\tilde{R}(y) \psi(y) \dy \Big|\notag\\
    & \leq \sup_{u \in \sfK}\Big|\alpha_{\delta, J}(u) - T_{L, m^*} \int_\Omega \alpha\left(\sum_{j=1}^J \widehat{c}_j\gamma \eta_2(y) \xi_j\right)\Gamma_{\delta_1}(y) \psi(y) \dy\Big| \notag\\
    & + |T_{L,m^*}| \epsilon_0 \|\psi\|_{L^1(\Omega)}M_{\alpha},\label{bnd:C3}
\end{align}
by \eqref{eqn:R-tilde-bnd}.
Let $C_3 = C_{\Gamma, \psi}\|\psi\|_{L^1(\Omega)}M_\alpha$. Continuing to bound the first term in line \eqref{bnd:C3}, 
\begin{align*}
     &\sup_{u \in \sfK}\Big| \alpha_{\delta, J}(u) - T_{L, m^*}\int_\Omega \alpha\left(\sum_{j=1}^J \widehat{c}_j\gamma \eta_2(y) \xi_j\right)\Gamma_{\delta_1}(y) \psi(y) \dy \Big| =\\
     \leq 
      &\underbrace{\sup_{u \in \sfK}\Big| \alpha_{\delta, J}(u) - T_{L, m^*}\int_{\Omega} \alpha\Big(\sum_{j=1}^J  \widehat{c}_j\;  \xi_j \Big)\Gamma_{\delta_1}(y)\psi(y) \dy \Big|}_{\text{(III)}}   \\
      &+ \sup_{u \in \sfK} \Big|T_{L, m^*} \int_{\Omega}  \alpha\Big(\sum_{j=1}^J \widehat{c}_j\; \xi_j \Big)\Gamma_{\delta_1}(y)\psi(y) \dy \\ 
      & - T_{L, m^*} \int_{\Omega}  \alpha\Big(\sum_{j=1}^J\widehat{c}_j \; \gamma \eta_2(y) \xi_j \Big)\Gamma_{\delta_1}(y)\psi(y) \dy \Big|.
      \end{align*}
      The second supremum term is bounded by 
      \begin{align}\label{eqn:second-sup-term}
       |T_{L, m^*}|\|\psi\|_{L^1(\Gamma)}\sup_{u \in \sfK}\sup_{y \in \Gamma} \big|\alpha\big(\sum_{j=1}^J\widehat{c}_j \; \xi_j\big) - \alpha\big(\sum_{j=1}^J \widehat{c}_j\; \gamma\eta_2(y) \xi_j\big) \big|. 
\end{align}
Let $\omega_\alpha$ be the modulus of continuity of $\alpha$. From the orthonormality of $\xi_j$ in $L^2(\Omega)$ and H\"older's inequality, it holds that 
\[\|\sum_{j=1}^J \widehat{c}_j(1-\gamma\eta_2(y))\xi_j\|_{L^1(\Omega)} \leq |\Omega|^{1/2} \sum_{j=1}^J |\widehat{c}_j ||1- \gamma\eta_2(y)|,\] and using \eqref{eqn:R-approx} and \eqref{eqn:eta2-gamma}, we may then bound \eqref{eqn:second-sup-term} by
\begin{equation}\label{bnd:C35}
    |T_{L, m^*}|\|\psi\|_{L^1(\Gamma})\sup_{u \in \sfK}\omega_\alpha(\epsilon_\gamma |\Omega|^{1/2}J \max_{j \in [J]} (| \la u, \xi_{j, \delta}\ra| + 1)) \leq |T_{L, m^*}|\|\psi\|_{L^1(\Gamma)}\omega_{\alpha}(\epsilon_\gamma C_{\Omega, J, \delta}).
\end{equation}

Next, we show that 
$$|T_{L,m^*}|\|\psi\|_{L^1(\Gamma)} = \frac{\int_\Gamma |\psi(y)| \dy }{|\int_{\tilde{\Gamma}_{\delta_1}} \psi(y) \dy|}$$ 
may be bounded independently of $\Gamma$ given fixed $\widehat{\Gamma}$ through sufficiently small choice of $\delta_1$. Since $\psi$ does not change sign on $\widehat{\Gamma}$, write $|\int_{\tilde{\Gamma}_{\delta_1}} \psi(y) \dy| = \int_{\tilde{\Gamma}_{\delta_1}} |\psi(y)|\dy = \int_\Gamma |\psi(y)| \dy - \int_{\Gamma \setminus \tilde{\Gamma}_{\delta_1}} |\psi(y)| \dy \geq \int_\Gamma |\psi(y)| \dy - \|\psi\|_{L^\infty(\Omega)} d\omega_d r^{d-1} \delta_1$, where $\omega_d$ is the volume of the unit ball in $\Rd$, and we have used the fact that the volume of the shell $\Gamma \setminus \tilde{\Gamma}_{\delta_1}$ is upperbounded by $d\omega_d r^{d-1} \delta_1$. Since $\psi$ is continuous and does not equal $0$ on $\Gamma$, $\int_\Gamma |\psi(y)| \dy>0$, so that $\delta_1^*(\Gamma) \coloneq \frac{\int_\Gamma |\psi(y)|\dy }{2 \|\psi\|_{L^\infty(\Omega)} d \omega_d r^{d-1}}$. Then for any $\delta_1$ such that $0< \delta_1 \leq \delta_1^*(\Gamma)$, we have $|T_{L,m^*}|\|\psi\|_{L^1(\Gamma)} \leq 2$, so \eqref{bnd:C35} becomes simply 
\begin{equation}\label{bnd:C355}
    |T_{L, m^*}|\|\psi\|_{L^1(\Gamma)}\sup_{u \in \sfK}\omega_\alpha(\epsilon_\gamma |\Omega|^{1/2}J \max_{j \in [J]} (| \la u, \xi_{j, \delta}\ra| + 1)) \leq 2\omega_{\alpha}(\epsilon_\gamma C_{\Omega, J, \delta}),
\end{equation}
so long as we choose $\delta_1 < \delta_1^*(\Gamma)$ after choosing $\Gamma$.
Now having dealt with the approximation of $1$ by $\gamma\eta_2$ on $\Gamma$, we continue to bound term (III):
\begin{align}
    &\sup_{u \in \sfK}\Big| \alpha_{\delta, J}(u) - T_{L, m^*}\int_{\Omega} \alpha\Big(\sum_{j=1}^J  \int_\Omega R_j(\mathbf{a}_{R_j})\eta_1(y') \dy'\;  \xi_j \Big)\Gamma_{\delta_1}(y)\psi(y) \dy \Big |\notag\\
     \leq &\underbrace{\sup_{u \in \sfK}\Big| \alpha_{\delta, J}(u) - T_{L, m^*}\int_{\Omega} \alpha\Big(\sum_{j=1}^J  \la u, \xi_{j,\delta}\ra\;  \xi_j \Big)\Gamma_{\delta_1}(y)\psi(y) \dy \Big |}_{\text{(IV)}} \notag\\
     & +  \sup_{u \in \sfK}\Big|T_{L, m^*}\int_{\Omega} \alpha\Big(\sum_{j=1}^J  \int_\Omega R_j(\mathbf{a}_{R_j})\eta_1(y') \dy'\;  \xi_j \Big)\Gamma_{\delta_1}(y)\psi(y) \dy \notag\\
     &- T_{L, m^*}\int_{\Omega} \alpha\Big(\sum_{j=1}^J  \la u, \xi_{j,\delta}\ra\;  \xi_j \Big)\Gamma_{\delta_1}(y)\psi(y) \dy \Big |\notag\\
     & \leq \text{(IV)}+
     |T_{L,m^*}|\|\psi\|_{L^1(\Omega)}\omega_{\alpha}(J^{1/2}\epsilon'|\Omega|^{1/2}) \label{bnd:C4}
    \end{align}
by \eqref{eqn:R-approx} and another application of H\"older's inequality within the argument of $\omega_\alpha$.
Set $C_4 = C_{\Gamma, \psi}\|\psi\|_{L^1(\Omega)}$. We can see that, within (IV), 
\begin{align*}
    &T_{L, m^*} \int_\Omega \alpha\left(\sum_{j=1}^J \la u, \xi_{j, \delta} \ra \xi_j\right)\Gamma_{\delta_1}(y) \psi(y) \dy \\
    & = \alpha\left(\sum_{j=1}^J \la u, \xi_{j, \delta} \ra \xi_j\right)\Big(1 + \int_{\Gamma \setminus \tilde{\Gamma}_{\delta_1}} \frac{1}{\int_{\tilde{\Gamma}_{\delta_1}} \psi(z) \dz } \psi(y) \Gamma_{\delta_1}(y) \dy \Big).
\end{align*}
Since $\alpha(\sum_{j=1}^J \la u, \xi_{j, \delta} \ra \xi_j) = \alpha_{\delta, J}(u)$ by definition, so finally we have that 
\begin{align}
    \text{(IV)} & \leq \sup_{u \in \sfK} |\alpha_{\delta, J}(u)| \left| \int_{\Gamma \setminus \tilde{\Gamma}_{\delta_1}} \frac{1}{\int_{\tilde{\Gamma}_{\delta_1}} \psi(z) \dz } \psi(y) \Gamma_{\delta_1}(y) \dy\right|\notag\\
    & \leq M_{\delta,J}\|\psi\|_{L^\infty(\Omega)}C_{\Gamma,\psi}\omega_d d r^{d-1} \delta_1,\label{bnd:C5}
\end{align}
where $M_{\delta,J} = \sup_{u \in \sfK} |\alpha_{\delta,J}(u)| < \infty$ by continuity of $\alpha_{\delta,J}$ and compactness of $\sfK$, $\omega_d$ is the volume of the unit ball in $\Rd$, and we have used the fact that the volume of the shell $\Gamma \setminus \tilde{\Gamma}_{\delta_1}$ is upperbounded by $d\omega_d r^{d-1} \delta_1$. Let $C_5 = M_{\delta,J}\|\psi\|_{L^\infty(\Omega)} C_{\Gamma, \psi}\omega_d dr^{d-1}$.

Bringing together the bounds of all the terms we have obtained, from equations \eqref{eqn:bnd-begin}, \eqref{bnd:C1}, \eqref{eqn:bnd-on-I}, \eqref{bnd:C2}, \eqref{bnd:C3}, \eqref{bnd:C355}, \eqref{bnd:C4}, and \eqref{bnd:C5}, we have
\begin{align*}
    \sup_{u \in \sfK}&\|\alpha_{\delta, J}(u) \eta - \tilde{\alpha}(u)\|_{C^{s'}(\Omega)} \leq C_1\epsilon_I + \|\eta\|_{C^{s'}(\Omega)}\Big(C_2\epsilon_2 + C_3\epsilon_0 \\ &+ 2 \omega_\alpha(\epsilon_\gamma C_{\Omega, J, \delta}) + C_4 \omega_\alpha(J^{1/2}\epsilon'|\Omega|^{1/2}) + C_5\delta_1\Big)
\end{align*}
The choices of $\Omega$, $\delta$, $s'$, $\eta$, $m^*$, $\psi$, $\widehat{\Gamma}$, and $J$ are fixed. For any $\varepsilon' >0$, we then may choose $\epsilon_\gamma$ such that $\omega_\alpha(\epsilon_{\gamma} C_{\Omega, J, \delta}) \leq \frac{\varepsilon'}{14\|\eta\|_{C^{s'}(\Omega)}}$. The choice of $\epsilon_\gamma$ determines $\Gamma$, $\gamma$, and $r$, which determines the value of $C_{\Gamma, \psi}$ as well as the domain bounds $M^*$, $M_\alpha$, and $M_Q$. Then constants $C_1$, $C_2$, $C_3$, $C_4$, and $C_5$ are determined. Choose $\epsilon '$ such that $\omega_{\alpha}(J^{1/2}\epsilon' |\Omega|^{1/2})\leq \frac{\varepsilon'}{7C_4\|\eta\|_{C^{s'}(\Omega)}}$.  We choose $\delta_1$ such that $\delta_1\leq \min\left(\frac{\varepsilon'}{7\|\eta\|_{C^{s'}(\Omega)}C_5}, \frac{r}{2}, \delta_1^*(\Gamma)\right)$, $\epsilon_2$ such that $\epsilon_2 < \frac{\varepsilon'}{7C_2\|\eta\|_{C^{s'}(\Omega)}}$, $\epsilon_0$ such that $\epsilon_0 < \frac{\varepsilon'}{7C_3\|\eta\|_{C^{s'}(\Omega)}}$, and $\epsilon_I$ such that $\epsilon_I < \frac{\varepsilon'}{7C_1}$. 
Combining this result with \eqref{eqn:alpha-convergence}, we have
\begin{align*}
    \sup_{u \in \sfK} \|\alpha(u) \eta - \tilde{\alpha}(u) \|_{C^{s'}(\Omega)} & \leq \sup_{u \in \sfK} \|\alpha(u) \eta - \alpha_{\delta, J}(u) \eta \|_{C^{s'}(\Omega)} + \sup_{u \in \sfK}\|\alpha_{\delta, J}\eta - \tilde{\alpha}(u) \|_{C^{s'}(\Omega)} \\
    & \leq \sup_{u \in \sfK} |\alpha(u) - \alpha_{\delta, J}(u) | \|\eta\|_{C^{s'}(\Omega)} + \varepsilon'\\
    & \leq \epsilon_{\alpha}\|\eta\|_{C^{s'}(\Omega)} + \varepsilon'.
\end{align*}
Choosing $\varepsilon' < \frac{\varepsilon}{2}$ and $\epsilon_\alpha \leq \frac{\varepsilon}{2\|\eta\|_{C^{s'}(\Omega)}}$, we have the desired result \eqref{eqn:lemma-alpha-approx}.
    
\end{proof}

\section{Eigenfunctions of the Laplacian}
\label{sec:eigfunc-Lap}


Here we state two lemmas related to approximation by sum of nonlinear functionals and eigenfunctions of Laplacian. The first lemma gives a convergence result for projection onto the basis of the eigenfunctions of the Laplacian. Though results of this type are generally well-known \cite{evans2022partial, gilbarg1998elliptic}, we were unable to find a proof for this particular case in the literature, so we have included a proof here. We note that the compact support assumption of the set to be approximated is essential since, by construction, projection onto the eigenfunctions of the Laplacian nullifies at the boundary the projected function $w$ and every power $\Delta^k w$ for integer $k \geq 0$.

\begin{lemma}[Projection onto the Eigenfunctions of the Laplacian]\label{lem:eigfunc-proj}
    Let $\Omega$ be a bounded Lipschitz domain in $\Rd$. Let $W$ be a bounded set in 
$C^k_c(\Obar)$, which is the subspace of $C^k(\Obar)$ whose functions have compact support in $\Omega$. Assume $k>d+1$ is an integer. Let $\{\lambda_1, \eta_1\}, \{\lambda_2, \eta_2\}, \dots$ be the eigenpairs of the Laplacian on $\Omega$ subject to homogeneous Dirichlet boundary conditions, ordered such that $0 < \lambda_1 \leq \lambda_2, \cdots$. Let $P_Jw = \sum_{j=1}^J \la w, \eta_j\ra_{L^2(\Omega)} \eta_j$ be the projection of $w$ onto the first $J$ eigenfunctions over $\Omega$. Then the approximation via the projection $P_J$ converges uniformly in $C(\Obar)$  as $J \to \infty$:
    \begin{equation*}
        \lim_{J \to \infty} \sup_{w \in W} \Big\|w - \sum_{j=1}^J \la w, \eta_j \ra_{L^2(\Omega)}\eta_j\Big\|_{C(\Obar)}=0.
    \end{equation*}
\end{lemma}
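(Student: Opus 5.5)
We write the tail $w - P_J w = \sum_{j>J} \la w,\eta_j\ra \eta_j$ and control it in the sup norm term by term, using a uniform $L^\infty$ bound on the eigenfunctions together with fast decay of the coefficients. The decay comes from the smoothness and compact support of $w$.

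\textbf{Coefficient decay.} Since $w\in C^k_c(\Obar)$ vanishes near $\p\Omega$, Green's identity can be applied repeatedly with no boundary terms. Each $\eta_j$ is $C^\infty$ in the interior, and $w$ is supported in a compact subset of $\Omega$, so we may integrate by parts there. This gives, for $m=\lfloor k/2\rfloor$,
\[
\la w,\eta_j\ra = \lambda_j^{-m}\la (-\Delta)^m w,\eta_j\ra ,\qquad\text{hence}\qquad |\la w,\eta_j\ra|\le \lambda_j^{-m}\,|\Omega|^{1/2}\,\|w\|_{C^{2m}(\Obar)} .
\]
The right-hand side is uniformly bounded over $W$ because $W$ is bounded in $C^k$.

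\textbf{Sup-norm bound on eigenfunctions.} We need $\|\eta_j\|_{L^\infty(\Omega)}\le C\lambda_j^{d/4}$ on a general bounded (even Lipschitz) domain. We plan to derive it from ultracontractivity of the Dirichlet heat semigroup: the Dirichlet heat kernel is dominated by the Gaussian kernel on $\R^d$, so $\|e^{t\Delta}\|_{L^2\to L^\infty}\le C t^{-d/4}$. Then
\[
\eta_j = e^{\lambda_j t}e^{t\Delta}\eta_j
\]
with $t=1/\lambda_j$ yields the bound. The same reasoning also gives $\eta_j\in C(\Obar)$ with $\eta_j=0$ on $\p\Omega$ for Lipschitz domains, since these are regular for the Dirichlet problem.

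\textbf{Eigenvalue growth.} We use Weyl's law, or simply the lower bound $\lambda_j\ge c\, j^{2/d}$ from Li--Yau or from domain monotonicity against a large box.

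\textbf{Combining.} These give
\[
\sup_{w\in W}\|w-P_Jw\|_{C(\Obar)}\le C\sum_{j>J}\lambda_j^{d/4-m}\le C\sum_{j>J} j^{(2/d)(d/4-m)} = C\sum_{j>J} j^{1/2-2m/d}.
\]
This sum converges, with tail tending to $0$, precisely when $2m/d>3/2$, i.e.\ $m>3d/4$. The stated hypothesis $k>d+1$ gives $m=\lfloor k/2\rfloor\ge \lceil d/2\rceil$, which is not quite enough for large $d$. If the exponents do not close, we will switch to a sharper argument. Use Parseval on $(-\Delta)^{m}w$ and bound
\[
\sum_{j>J}|\la w,\eta_j\ra||\eta_j(x)|\le \Big(\sum_{j>J}\lambda_j^{2m}|\la w,\eta_j\ra|^2\Big)^{1/2}\Big(\sum_{j>J}\lambda_j^{-2m}\eta_j(x)^2\Big)^{1/2}.
\]
The second factor is controlled by the heat-kernel diagonal bound $\sum_j e^{-t\lambda_j}\eta_j(x)^2\le Ct^{-d/2}$, integrated in $t$ against $t^{2m-1}$. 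This requires only $2m>d/2$, comfortably implied by $k>d+1$. Uniform smallness then follows because the tail $\sum_{j>J}\lambda_j^{-2m}\eta_j(x)^2\to0$ uniformly in $x$, which is a Dini-type argument or a direct bound using $\lambda_{J}^{-\epsilon}$.

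\textbf{Main obstacle.} The main difficulty is justifying these spectral estimates, namely the $L^\infty$ eigenfunction bound and the heat-kernel diagonal bound, uniformly on a merely Lipschitz domain. We would rely on the Gaussian domination of the Dirichlet heat kernel, which holds for arbitrary open sets, and on the continuity of each $\eta_j$ up to the boundary.
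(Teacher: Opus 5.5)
Your argument is correct, but the route you end up using differs from the paper's. The paper keeps the individual sup bound $\|\eta_j\|_{L^\infty}\le c_d\lambda_j^{d/4}$ from Davies' ultracontractivity, as in your first attempt. It repairs the exponent count by applying Cauchy--Schwarz in $j$ together with Bessel's inequality for the coefficients $\langle \Delta^r w,\eta_j\rangle$:
\[
\sum_{j>J}\lambda_j^{d/4-r}\,|\langle\Delta^r w,\eta_j\rangle|\le \|\Delta^r w\|\Big(\sum_{j>J}\lambda_j^{d/2-2r}\Big)^{1/2}.
\]
It then uses Weyl's law $\lambda_j\gtrsim j^{2/d}$ to get $\sum_{j>J}\lambda_j^{d/2-2r}=O(J^{2-4r/d})$. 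This requires $r>d/2$ and $2r\le k$, which is exactly where the hypothesis $k>d+1$ comes from. So your first route would already have closed under the stated hypothesis if you had replaced the crude bound $|\langle\Delta^m w,\eta_j\rangle|\le\|\Delta^m w\|$ by square-summability.

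Your fallback instead applies Cauchy--Schwarz pointwise in $x$ with weights $\lambda_j^{\pm m}$. The only spectral input is then the on-diagonal heat kernel bound $\sum_j e^{-t\lambda_j}\eta_j(x)^2\le(4\pi t)^{-d/2}$, which carries the sup-norm and eigenvalue-counting information at once. This buys you a sharper result: you need only $2m>d/2$ (the Sobolev threshold, about half the regularity the paper uses), and no Weyl law beyond $\lambda_J\to\infty$. You also avoid the loss from bounding every eigenfunction by its worst-case sup norm. The paper's version is more elementary, using only per-eigenfunction bounds and eigenvalue asymptotics, at the cost of that extra regularity.

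Two details to tighten in yours:
\begin{itemize}
\item The bound $Ct^{-d/2}$ alone, integrated against $t^{2m-1}$ over $(0,\infty)$, diverges at $t=\infty$ when $2m>d/2$. Split at $t=1$ and use $\sum_j e^{-t\lambda_j}\eta_j(x)^2\le e^{-(t-1)\lambda_1}(4\pi)^{-d/2}$ for $t\ge 1$.
\item For uniformity of the tail in $x$, use the bound $\sum_{j>J}\lambda_j^{-2m}\eta_j(x)^2\le\lambda_{J+1}^{-\epsilon}\sup_x\sum_j\lambda_j^{-(2m-\epsilon)}\eta_j(x)^2$ with $2m-\epsilon>d/2$. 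A Dini argument would first require knowing that the limiting spectral function is continuous on $\overline{\Omega}$.
\end{itemize}
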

\begin{proof}
    Let $e_J = w - P_J w = \sum_{j = J+1}^\infty \la w, \eta_j\ra_{L^2(\Omega)} \eta_j$ since $\{\eta_j\}_{j=1}^\infty$ form a basis for $L^2(\Omega)$. We recall that $\eta_j \in C^\infty(\Omega) \cap C(\Obar)$, and $\eta_j = 0$ on $\p\Omega$ \cite{evans2022partial}.

    We bound each inner product individually via 
    \begin{align*}
        \la w, \eta_j \ra & = \int_{\Omega} w \eta_j \dx \\
        & = -\int_{\Omega} \frac{1}{\lambda_j} w \Delta \eta_j \dx \\
        & =  \frac{1}{\lambda_j}\int_\Omega \nabla w \cdot \nabla \eta_j \dx - \frac{1}{\lambda_j}\int_{\p \Omega} w(\nabla \eta_j \cdot \hat{n}) \dx\\
        & = \frac{1}{\lambda_j}\int_{\partial \Omega} \eta_j (\nabla w \cdot \hat{n}) \dx - \frac{1}{\lambda_j} \int_{\Omega} \eta_j \nabla\cdot(\nabla w) \dx \\
        & =  -\frac{1}{\lambda_j}\int_\Omega \eta_j \Delta w \dx.
    \end{align*}
    Since $w$ and all its derivatives have compact support in $\Omega$, we may iterate this procedure $r$ times to obtain 
    \begin{equation}\label{eqn:eigfunc-IP-bnd}
    \la w, \eta_j \ra  = \frac{(-1)^r}{\lambda_j^r} \la \Delta^r w, \eta_j \ra.
\end{equation}

Furthermore, from \cite[Lemma 2.1]{davies1984ultracontractivity} combined with the unlabeled bound on $a_t(x,y)$ contained on \cite[p. 377]{davies1984ultracontractivity}, we can bound the eigenfunctions via 
\begin{equation*}
    \|\eta_j\|_{L^\infty(\Obar)} \leq e^{(\lambda_j/2)t}(4\pi t)^{-d/4}
\end{equation*}
for any $t>0$. Optimizing over $t$, we have the bound 
\begin{equation}
    \|\eta_j\|_{L^\infty(\Obar)} \leq c_d \lambda_j^{d/4},
\end{equation}
where $c_d$ is a constant dependent only on $d$. Then we may bound the tail
\begin{align*}
    \|e_J\|_{C(\Obar)} &\leq \sum_{j > J} |\la w, \eta_j \ra | \|\eta_j\|_{L^\infty(\Obar)}\\
    & \leq c_d \sum_{j > J}\lambda_j^{d/4- r} | \la \Delta^r w, \eta_j \ra | \\
    & \leq c_d \big(\sum_{j > J} \lambda_j^{d/2 - 2r}\big)^{1/2} \big(\sum_{j > J} |\la \Delta^r w, \eta_j \ra|^2\big)^{1/2}\\
    & \leq c_d \|\Delta^r w\| \big(\sum_{j > J} \lambda_j^{d/2 - 2r}\big)^{1/2}.
\end{align*}

We now must bound the summation. Weyl's law states that the number $N(\lambda)$ of these eigenvalues that are less than or equal to $\lambda$ satisfies 
    \[
    \lim_{\lambda \to \infty} \frac{N(\lambda)}{\lambda^{d/2}} = (2\pi)^{-d} \omega_d \text{vol}(\Omega),
    \]
    where $\omega_d$ is the volume of the unit ball in $\Rd$ \cite{netrusov2005weyl, weyl1912asymptotische}. Thus, for any choice of $\epsilon$, there exists a $\lambda_{\epsilon}$ such that for all $\lambda > \lambda_{\epsilon}$, 
    \[
    \Big|\frac{N(\lambda)}{\lambda^{d/2}} - (2\pi)^{-d}\omega_d \text{vol}(\Omega)\Big| < \epsilon.
    \]
    To handle duplicate eigenvalues, let $j^*$ be the largest index $k$ such that $\lambda_k \leq \lambda_{j}$. Note that if $\lambda_j$ is not a duplicate eigenvalue, then $j^* = j$. Then for $\lambda_j > \lambda_{\epsilon}$,
    \[
    \Big|\frac{j^*}{\lambda_j^{d/2}} - (2\pi)^{-d}\omega_d \text{vol}(\Omega)\Big| < \epsilon.
    \]
    Rearranging yields
    \[
    \frac{1}{\lambda_j} < \left(\frac{\epsilon + (2\pi)^{-d}\omega_d \text{vol}(\Omega)}{j^*}\right)^{2/d},
    \]
    which holds for $j > J_{\epsilon}$, where $J_{\epsilon}$ is the smallest index such that $\lambda_{J_{\epsilon}} > \lambda_{\epsilon}$.

    Then, 
    \begin{align*}
        \|e_{J_{\epsilon}}\|_{C(\Obar)} &\leq c_d\|\Delta^r w\| \left(\sum_{j > J_\epsilon} \left(\frac{\epsilon + (2\pi)^{-d}\omega_d \text{vol}(\Omega)}{j}\right)^{\frac{4r}{d} - 1}\right)^{1/2} \\
        & \leq c_d\|\Delta^r w\|(\epsilon + (2\pi)^{-d}\omega_d \text{vol}(\Omega))^{\frac{2r}{d} - \frac{1}{2}} \left(\int_{J_\epsilon}^\infty \frac{1}{j^{\frac{4r}{d} -1}}\mathsf{d} j\right)^{1/2}
    \end{align*}

    For convergence, we require $r > \frac{d}{2}$; let $r$ be the smallest such integer that satisfies this constraint. So that $\Delta^r w$ is defined and in $L^2$, we require $2r \leq k$. By assumption, $W \subset C^k_c(\Obar)$ for $k > d+1$, so $\sup_{w \in W} \|\Delta^r w\|$ is finite for this choice of $r$. The bound becomes 
    \[ \|e_{J}\|_{C(\Obar)} \leq c_{d,W,r,\Omega} J^{1-\frac{2r}{d}},\]
    for $J > J_1$, where $c_{d,W,r,\Omega} = \sup_{w \in W} \|\Delta^{r} w\|(1 + (2\pi)^{-d}\omega_d \text{vol}(\Omega))^{\frac{2r}{d} - \frac{1}{2}}$. We have set $\epsilon = 1$, as it plays no role in the convergence. Since this is the tail bound, we have that 
       \begin{equation*}
        \lim_{J \to \infty} \sup_{w \in W} \Big\|w - \sum_{j=1}^J \la w, \eta_j \ra_{L^2(\Omega)}\eta_j\Big\|_{C(\Obar)} = \lim_{J \to \infty} \sup_{w \in W} \|e_{J}\|_{C(\Obar)}=0.
    \end{equation*}
\end{proof}

\end{document}